\theoremstyle{definition}
\newtheorem{nul}{}[section]
\newtheorem{rmk}[nul]{Remark}
\newtheorem{cnstr}[nul]{Construction}
\newtheorem{exm}[nul]{Example}
\newtheorem{rec}[nul]{Recollection}
\newtheorem*{dfn*}{Definition}
\newtheorem*{axm*}{Axiom}
\newtheorem*{ntn*}{Notation}
\newtheorem*{exm*}{Example}
\newtheorem*{exr*}{Exercise}
\newtheorem*{int*}{Intuition}
\newtheorem*{qst*}{Question}
\newtheorem*{rmk*}{Remark}
\newtheorem{thm}[nul]{Theorem}
\newtheorem{thmx}{Theorem}
\newtheorem*{thm:main}{Theorem \ref{thm:main}}
\newtheorem*{thm:iteratedK}{Theorem \ref{thm:iteratedK}}
\newtheorem{prop}[nul]{Proposition}
\newtheorem{lem}[nul]{Lemma}
\newtheorem{cor}{Corollary}[nul]
\newtheorem*{thm*}{Theorem}
\newtheorem*{prop*}{Proposition}
\newtheorem*{cor*}{Corollary}
\newtheorem*{lem*}{Lemma}
\newtheorem*{cnj*}{Conjecture}
\DeclareMathOperator*{\colim}{\mathrm{colim}}
\DeclareMathOperator{\Hom}{\text{Hom}}
\DeclareMathOperator{\bD}{\mathbf{D}}
\newcommand{\Z}{\mathbb{Z}}
\DeclareMathOperator{\E}{\mathbb{E}}
\DeclareMathOperator{\Q}{\mathbb{Q}}
\DeclareMathOperator{\p}{\mathfrak{p}}
\DeclareMathOperator{\G}{\mathbb{G}}
\DeclareMathOperator{\F}{\mathbb{F}}
\DeclareMathOperator{\Spec}{\mathrm{Spec}}
\DeclareMathOperator{\Spf}{\mathrm{Spf}}
\DeclareMathOperator{\CAlg}{\mathrm{CAlg}}
\DeclareMathOperator{\Sp}{\mathrm{Sp}}
\DeclareMathOperator{\THH}{\mathrm{THH}}
\DeclareMathOperator{\sh}{\mathrm{sh}}
\DeclareMathOperator{\Perf}{\mathrm{Perf}}
\newcommand{\tp}{t_+}
\newcommand{\Alt}[2]{\mathrm{Alt}^{(#1)}_{#2}}
\begin{document}

\title{Examples of chromatic redshift in algebraic $K$-theory}
\author{Allen Yuan}

\maketitle

\begin{abstract}
We give a simple argument to detect chromatic redshift in the algebraic $K$-theory of $\E_{\infty}$-ring spectra and give two applications: we show for $n\geq 1$ that $K(E_n)$, the algebraic $K$-theory of any height $n$ Lubin-Tate theory, has nontrivial $T(n+1)$-localization, and that $K^{(n)}(k)$, the $n$-fold iterated algebraic $K$-theory of a field $k$ of characteristic different from $p$, has nontrivial $T(n)$-localization.  \end{abstract}
\setcounter{tocdepth}{1}
\tableofcontents{}

\section{Introduction}

Chromatic homotopy theory organizes generalized cohomology theories on spaces by \emph{chromatic height}.  Roughly, the chromatic height of a theory measures the complexity of the torsion seen by the theory.  Height 0 theories, such as rational cohomology, see no torsion information.  At height 1, one sees more sophisticated cohomology theories such as real or complex $K$-theory; nevertheless, these theories remain relatively well understood.  At heights 2 and above, the situation rapidly becomes more complicated: one has elliptic cohomology and topological modular forms at height 2, and Lubin-Tate theories $E_n$ and topological automorphic forms at higher height, but these theories are computationally more difficult and elude concrete geometric understanding \cite{LRS, GH, Ell2, TAF}.

This paper is concerned with a phenomenon in which cohomology theories of higher height arise from theories of lower height: the chromatic redshift in algebraic $K$-theory.  Motivated by finding ring spectrum analogues of the Lichtenbaum-Quillen conjectures, Ausoni and Rognes computed the (mod $(p,v_1)$) algebraic $K$-theory of (the Adams summand of) connective $K$-theory \cite{AusRog}.  From these computations, they observed that algebraic $K$-theory tends to increase chromatic complexity by one: that is, the algebraic $K$-theory of a height $n$ ring spectrum is often of height $n+1$.  This became known as the \emph{chromatic redshift philosophy}.

Chromatic redshift has since been studied extensively: in height 1 examples such as $ku^{\wedge}_p, KU^{\wedge}_p$ and $ku/p$ by \cite{BlumMan, Ausoni, AusRog, AusRog2}; computationally in \cite{Angelini-Knoll, AKQ, Veen} and via theoretical considerations \cite{Westerland, CSY}; additionally, questions of descent were addressed in the foundational papers \cite{CMNNausrog, CM}.  
This work builds upon recent progress by Land-Mathew-Meier-Tamme \cite{LMMT} and 
Clausen-Mathew-Naumann-Noel \cite{CMNN}: to motivate it, we sample a result of \cite{CMNN} which generalizes a theorem of Mitchell \cite{Mitchell} in the case $n=1$:

\begin{thm}[Theorem A, \cite{CMNN}]\label{thm:introCMNN}
Let $R$ be an $\E_{\infty}$-ring spectrum and let $n\geq 0$.  If $L_{T(n)}R = 0,$ then $L_{T(n+1)}K(R) = 0$. 
\end{thm}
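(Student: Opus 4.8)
The plan is to deduce this from the \emph{purity theorem} of \cite{CMNN} (see also \cite{LMMT}): for any $\E_1$-ring $A$ and any $n\geq 0$, the canonical map $K(A)\to K(L_n^f A)$ is a $T(n+1)$-local equivalence, where $L_n^f$ denotes the finite localization away from a type-$(n+1)$ complex. Granting purity, it suffices to prove $L_{T(n+1)}K(L_n^f R)=0$. This is where the hypothesis enters: $L_n^f R$ is $L_n^f$-local by construction, and since $L_{T(n)}\simeq L_{T(n)}\circ L_n^f$ we get $L_{T(n)}(L_n^f R)\simeq L_{T(n)}R=0$, so the top monochromatic layer $M_n^f(L_n^f R)=\fib(L_n^f R\to L_{n-1}^f R)$ vanishes; that is, $L_n^f R$ is already $L_{n-1}^f$-local. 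Replacing $R$ by $L_n^f R$, the theorem reduces to the following: \emph{the algebraic $K$-theory of an $L_{n-1}^f$-local $\E_\infty$-ring $R$ satisfies $L_{T(m)}K(R)=0$ for all $m\geq n+1$.}

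I would prove this by induction on $n$; the base case $n=0$ is immediate, since the only $L_{-1}^f$-local spectrum is $0$ (equivalently, for $n=0$ the reduction above already gives $L_0^f R\simeq R_{\mathbb{Q}}\simeq L_{T(0)}R=0$). For the inductive step, apply the finite chromatic fracture square at height $n-1$ to write $R$ as a pullback of $\E_\infty$-rings
\[
R \;\simeq\; L_{n-2}^f R \;\times_{\,L_{n-2}^f(L_{T(n-1)}R)\,}\; L_{T(n-1)}R .
\]
The lower-left and lower-right corners are $L_{n-2}^f$-local $\E_\infty$-rings, so by the inductive hypothesis their $K$-theories are $L_{T(m)}$-acyclic for all $m\geq n$, hence a fortiori for $m\geq n+1$. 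Thus, if algebraic $K$-theory sent this square to a square that becomes cartesian after $T(n+1)$-localization, the problem would collapse to bounding the height of $K$ of the remaining corner $L_{T(n-1)}R$, a $T(n-1)$-local $\E_\infty$-ring. For that one would run Galois descent along the faithful (pro-)Galois extensions of Lubin--Tate theory, using the ambidexterity of the $T(n-1)$-local category, to reduce to the case of $E_{n-1}$ itself, and then feed in the bound that $K(E_{n-1})$ has chromatic height at most $n$ (known by explicit computation at low heights --- Ausoni--Rognes when $n-1=1$ --- and by structural arguments in general).

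The main obstacle is the bracketed assertion: algebraic $K$-theory does not preserve general pullback squares of ring spectra, and the chromatic fracture square fails the connectivity hypotheses of the Dundas--Goodwillie--McCarthy and Land--Tamme excision theorems. This is exactly where the $\E_\infty$-hypothesis must be used, rather than merely $\E_1$: by a theorem of Hahn the chromatic support of an $\E_\infty$-ring is an initial segment of $\{0,1,2,\dots\}$, and this rigidity --- combined with a multiplicative refinement of purity that remembers power operations --- is what should force the $K$-theoretic fracture square to be $T(n+1)$-locally cartesian. I expect the heart of the argument to be this interaction, together with the uniform height bound for the top layer, where the natural mechanism is to produce in $\pi_* L_{T(n+1)}K(L_{T(n-1)}R)$ a class that is simultaneously invertible (we have inverted $T(n+1)$) and nilpotent (the input is $T(n)$-acyclic), which collapses the ring to zero.
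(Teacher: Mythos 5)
This result is cited from \cite{CMNN} rather than proved in the paper, so there is no in-text proof to compare against; but the paper's own toolkit already yields a short deduction, and measuring your proposal against it is instructive. Given the results quoted in the paper, the argument is essentially two lines: since $R$ is $\E_\infty$ and $L_{T(n)}R=0$, Hahn's theorem (cited in Remark~1.4) gives $L_{T(n+1)}R=0$ as well, so $R\to 0$ is a $T(n)\vee T(n+1)$-equivalence of $\E_1$-rings; applying \Cref{thm:LMMT} (with its parameter set to $n+1$) then shows $K(R)\to K(0)\simeq 0$ is a $T(n+1)$-equivalence, i.e.\ $L_{T(n+1)}K(R)=0$. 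You actually cite Hahn's theorem in your last paragraph, but only speculatively; note that it already closes the argument once combined with the purity/LMMT input you also invoke, with no fracture squares needed. (Historically CMNN's proof predates both Hahn and LMMT and proceeds differently, via their categorical purity and descent machinery, but that is another matter.)

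As for the proposal as written, there are genuine gaps in the inductive step. First, after reducing to an $L_{n-1}^{f}$-local ring, you fracture $R$ at height $n-1$ and need $K$-theory to send the resulting pullback square of $\E_\infty$-rings to a $T(n+1)$-locally cartesian square. You correctly flag that this is the crux, but the proposed fix --- that the $\E_\infty$-structure, via Hahn's support rigidity and ``a multiplicative refinement of purity that remembers power operations,'' forces the square to become cartesian --- is not an argument. Algebraic $K$-theory does not take chromatic fracture squares to (even localized) pullbacks, and neither the Dundas--Goodwillie--McCarthy nor the Land--Tamme excision theorems apply here; nothing about commutativity repairs this. Second, the remaining corner $L_{T(n-1)}R$ is $T(n-1)$-local but not $L_{n-2}^{f}$-local, so your inductive hypothesis does not cover it; you propose to handle it via Galois descent down to $E_{n-1}$ and then invoke a height bound on $K(E_{n-1})$. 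But that bound is precisely an instance of the theorem being proved (take the ring to be $E_{n-1}$, which is $T(n)$-acyclic), so the argument is circular as stated. Finally, a small imprecision: the purity statement you quote is really a statement about the Verdier quotient $L_n^{f}\Perf(R)$, not literally $K(L_n^{f}R)$; the distinction matters at the level of rigor even if the spirit is right. The first reduction and the observation that $L_n^{f}R$ becomes $L_{n-1}^{f}$-local when $L_{T(n)}R=0$ are sound and in the spirit of CMNN's approach; what is missing is a valid way to finish from there, and the Hahn-plus-LMMT route above supplies one.
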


\begin{rmk}
Here, $L_{T(n)}$ denotes Bousfield localization at a height $n$ telescope $T(n)$; the non-vanishing of this localization is a sense in which a spectrum is ``supported at height $n$."  We will say an $\E_{\infty}$-ring $R$ is \emph{of height} $n\geq 0$ if $L_{T(n)}R \not\simeq 0$ and $L_{T(n+1)} R \simeq  0$; in fact, in this situation, a theorem of Hahn \cite{Hahn} implies that $L_{T(j)} R \simeq 0 $ for all $j>n$.
\end{rmk}  

Roughly speaking, \Cref{thm:introCMNN} asserts that the algebraic $K$-theory of a height $n$ $\E_{\infty}$-ring is of height \emph{at most} $n+1$.  Following this result, the question remained whether height shifting actually occurs; that is, when is the algebraic $K$-theory of a height $n$ ring is of height \emph{exactly} $n+1$?   In this paper, we give a simple and non-computational explanation for this height shifting in a range of examples. We show:

\begin{thmx} \label{thm:Eredshift}
Let $k$ be a perfect field of characteristic $p$, let $\mathbb{G}_0$ be a $1$-dimensional formal group over $k$ of height $n\geq 1$, and let $E_{n} = E_{k,\mathbb{G}_0}$ denote the associated Lubin-Tate theory.  Then $L_{T(n+1)} K(E_n)$ is nonzero.  
\end{thmx}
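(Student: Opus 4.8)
The plan is to deduce the statement from a general \emph{detection principle} — that algebraic $K$-theory raises the chromatic height of a system of $p$-power roots of unity by one, so that an $\E_{\infty}$-ring of height $n$ containing ``height-$n$ roots of unity'' has $L_{T(n+1)}K(-)\neq 0$ — together with the observation that the Lubin--Tate theory $E_n$ carries exactly this structure. For the structural input: $E_n$ is $K(n)$-local, so $L_{T(n)}E_n\neq 0$ (it surjects onto $L_{K(n)}E_n=E_n$) while $L_{T(n+1)}E_n=0$ (being $E(n)$-local, it is $T(n+1)$-acyclic); thus $E_n$ has height exactly $n$, and by \Cref{thm:introCMNN} the $K$-theory $K(E_n)$ has height at most $n+1$, so the point is to produce the \emph{nonzero} class at height $n+1$. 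Next, by the Goerss--Hopkins--Miller and Devinatz--Hopkins theorems $E_n$ is a pro-Galois extension of $L_{K(n)}\mathbb{S}$ with group the extended Morava stabilizer group $\mathbb{G}_n$, and — once $n\ge 1$, over any perfect field $k$ — the determinant $\det\colon\mathbb{G}_n\to\Z_p^{\times}$ exhibits the $K(n)$-local cyclotomic extension $\mathbb{S}_{K(n)}[\omega_{p^{\infty}}]:=E_n^{h\ker(\det)}$ — the ``determinant sphere,'' which carries the compatible system of height-$n$ $p$-power roots of unity of \cite{CSY} — as an intermediate $\E_{\infty}$-ring
\[
L_{K(n)}\mathbb{S}\ \longrightarrow\ \mathbb{S}_{K(n)}[\omega_{p^{\infty}}]\ \longrightarrow\ E_n,
\]
over which $E_n$ is itself a faithful pro-Galois extension.

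Granting this, I would reduce \Cref{thm:Eredshift} to the single assertion that $L_{T(n+1)}K\big(\mathbb{S}_{K(n)}[\omega_{p^{\infty}}]\big)\neq 0$: since $L_{T(n+1)}K(-)$ satisfies Galois descent \cite{CMNN,CM}, one has $L_{T(n+1)}K\big(\mathbb{S}_{K(n)}[\omega_{p^{\infty}}]\big)\simeq\big(L_{T(n+1)}K(E_n)\big)^{h\ker(\det)}$, so nonvanishing of the left-hand side forces $L_{T(n+1)}K(E_n)\neq 0$. For the remaining assertion the mechanism is the Bott element: the root of unity $\omega_{p^k}$ produces a class $[\omega_{p^k}]\in K_1$ killed by $p^k$, hence a Bott lift $\beta_k\in\pi_{\ast}(K(-);\Z/p^k)$, and the content of the detection principle is that the $\beta_k$, chosen compatibly, organize into a non-nilpotent $v_{n+1}$-self map on $L_{T(n+1)}K(-)/p$; as $K_0\neq 0$ this forces the telescope to be nonzero. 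Two formal reductions clean up the argument: by the localization sequence one may pass to the mod $p$ fiber of $K\big(\mathbb{S}_{K(n)}[\omega_{p^{\infty}}]\big)$, since inverting $p$ yields a rational ring whose $T(n+1)$-localized $K$-theory vanishes for $n\ge 1$; and by the Land--Mathew--Meier--Tamme theorem \cite{LMMT} that $L_{T(n+1)}K(-)$ depends only on the $T(n)\vee T(n+1)$-localization, one may work $K(n)$-locally throughout — which is precisely why ``height $n$'' is the natural hypothesis on the input ring.

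The main obstacle is pinning down the exact height of the Bott element. That $\beta_k$ is \emph{some} non-nilpotent $v_m$-self map with $m\le n+1$ is essentially formal — the upper bound being \Cref{thm:introCMNN} applied to the $T(n+1)$-acyclic ring $\mathbb{S}_{K(n)}[\omega_{p^{\infty}}]$ — but one must exclude $m\le n$, i.e.\ show that the $K$-theoretic image of a height-$n$ root of unity genuinely survives $T(n+1)$-localization and is not annihilated at lower heights. The sharpest form of what I would aim to prove is that the classes $\beta_k$ assemble into an $\E_{\infty}$-ring map from the $(n+1)$-st chromatic cyclotomic extension $\mathbb{S}_{T(n+1)}[\omega_{p^{\infty}}]$ to $L_{T(n+1)}K\big(\mathbb{S}_{K(n)}[\omega_{p^{\infty}}]\big)$ — so that ``$K$-theory shifts the height of the cyclotomic extension up by one'' — whence the target is nonzero because the source is, being a faithful extension of the nonzero ring $L_{T(n+1)}\mathbb{S}$. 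I expect to establish this either by passing along the cyclotomic trace and analyzing $\THH\big(\mathbb{S}_{K(n)}[\omega_{p^{\infty}}]\big)$, where the circle action created by the roots of unity realizes a $B\mu_{p^{\infty}}$ of computable chromatic support, or by directly comparing the cyclotomic towers of $L_{K(n)}\mathbb{S}$ and of its $K$-theory — in either case using the $K(n)$-local reduction above so that no phenomena below height $n$ interfere.
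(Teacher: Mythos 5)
Your proposal takes a genuinely different route from the paper, but as written it has a gap at precisely the step that carries the content of the theorem.

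The paper's mechanism is the Tate construction: its \Cref{thm:main} shows $L_{T(n)}K(E^{tC_p})\neq 0$ whenever $L_{T(n)}E\neq 0$, by composing the Dennis trace $K(E^{tC_p})\to\THH(E^{tC_p})^{hS^1}$ with the canonical $\E_\infty$-map to $(E^{tC_p})^{hS^1/C_p}$ coming from the universal property of $\THH$, then invoking the Tate orbit lemma to identify the target with $(E^{tS^1})^\wedge_p$, which retains height $n$. To get from there to $K(E_n)$ specifically, the paper does not try to construct Bott elements; it tries to build an $\E_\infty$-ring map $E_n\to L_{K(n)}(E_{n+1}^{tC_p})^{\sh}_\p$ using the Goerss--Hopkins--Miller/Lurie universal property of Lubin--Tate theory, after first passing to a strict henselization at a well-chosen prime (\Cref{lem:stalk}, relying on étale descent for $L_{T(n+1)}K(-)$) so that the residue field is separably closed and formal groups of height $n$ become isomorphic. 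The nonvanishing of $L_{T(n+1)}K(E_n)$ is then immediate from \Cref{rmk:ringmap}. Notice that this argument never constructs any non-nilpotent self-map: the entire height statement is outsourced to \Cref{prop:tS1height}, and no Lichtenbaum--Quillen-type analysis of Bott elements is needed.

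Your proposal instead reduces, via the determinant sphere and Galois descent, to showing $L_{T(n+1)}K\big(\mathbb{S}_{K(n)}[\omega_{p^{\infty}}]\big)\neq 0$, and then wants to exhibit the Bott classes $\beta_k$ as a compatible system forming a non-nilpotent $v_{n+1}$-self map, or equivalently to build an $\E_\infty$-ring map $\mathbb{S}_{T(n+1)}[\omega_{p^{\infty}}]\to L_{T(n+1)}K\big(\mathbb{S}_{K(n)}[\omega_{p^{\infty}}]\big)$. This last step is the whole theorem, and you have not proven it — you explicitly flag it as the ``main obstacle'' and say you ``expect to establish'' it via the cyclotomic trace or a comparison of cyclotomic towers. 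The upper bound $m\le n+1$ is indeed formal from \Cref{thm:introCMNN}, but the lower bound (that the Bott class does not die at some lower telescope, i.e.\ that it is genuinely a $v_{n+1}$-self map and not a $v_m$-self map for $m\le n$) is exactly the ``redshift happens'' assertion, and nothing in your sketch supplies it. By contrast the paper gets the height-raising for free from the Tate orbit lemma (\Cref{prop:tateorbit}) plus the complex-orientability retraction argument (\Cref{prop:tS1height}), which is why its argument is short.

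Two smaller issues deserve mention as well. First, the Galois descent you invoke is over the infinite profinite group $\ker(\det)\subset\mathbb{G}_n$; the results of \cite{CMNN} cited in the paper are for finite groups, and the passage to a pro-Galois extension of this size (which is not pro-$p$) needs justification. Second, the determinant sphere and determinant character are usually constructed for the Honda formal group over $\F_{p^n}$ or $\overline{\F}_p$; for an arbitrary perfect residue field $k$ and arbitrary height-$n$ formal group $\G_0$ the existence of the intermediate $\E_\infty$-ring $E_n^{h\ker(\det)}$ is not something you can simply assert. The paper avoids both issues by working with strict henselization (which yields a separably closed residue field for free) and by using only the finite-group and étale descent statements actually established in the literature it cites.

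So: the approach is interesting and may well be completable — the philosophy of ``$K$-theory raises the height of the cyclotomic extension by one'' is attractive and consonant with the higher cyclotomic ideas of \cite{CSY} — but as it stands the core detection step is a conjecture you state rather than a lemma you prove, and that is where the argument would need the real work.
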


\begin{thmx}\label{thm:iteratedK}
Let $n\geq 0$ and let $K^{(n)}$ denote the $n$-fold iterate of algebraic $K$-theory.  Then, for any field $k$ of characteristic different from $p$, the spectrum $L_{T(n)}K^{(n)}(k)$ is nonzero.  
\end{thmx}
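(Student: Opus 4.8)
The plan is to argue by induction on $n$, reducing everything to \Cref{thm:Eredshift} by one elementary observation: if $f\colon R\to S$ is a map of $\E_{\infty}$-rings and $L_{T(m)}S\neq 0$, then $L_{T(m)}R\neq 0$. Indeed, the $T(m)$-acyclic spectra form a $\otimes$-ideal, so $L_{T(m)}$ is lax symmetric monoidal and in particular carries $\E_{\infty}$-rings to $\E_{\infty}$-rings; thus $L_{T(m)}f\colon L_{T(m)}R\to L_{T(m)}S$ is an $\E_{\infty}$-ring map, and since there is no $\E_{\infty}$-ring map out of the zero ring into a nonzero ring, $L_{T(m)}R=0$ would force $L_{T(m)}S=0$. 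So to prove that $L_{T(n)}K^{(n)}(k)$ is nonzero it suffices to exhibit an $\E_{\infty}$-ring map from $K^{(n)}(k)$ to some $\E_{\infty}$-ring with nonvanishing $T(n)$-localization, and height-$m$ Lubin--Tate theories are the natural targets: $L_{T(m)}E_m\neq 0$ since $E_m$ is a nonzero $K(m)$-local ring, and, decisively, $L_{T(m+1)}K(E_m)\neq 0$ by \Cref{thm:Eredshift}.

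Since $K^{(n)}$ is functorial in $\E_{\infty}$-rings, the inclusion $k\hookrightarrow\bar{k}$ gives a ring map $K^{(n)}(k)\to K^{(n)}(\bar{k})$, so by the previous paragraph we may assume $k$ algebraically closed. For $n=1$ this is classical: after $p$-completion Suslin rigidity identifies $K(\bar{k})^{\wedge}_{p}$ with $ku^{\wedge}_{p}$ as $\E_{\infty}$-rings, so composing with the $p$-completion of the map $ku\to KU$ produces an $\E_{\infty}$-ring map $K(\bar{k})\to KU^{\wedge}_{p}$, and $KU^{\wedge}_{p}$ is the height-$1$ Lubin--Tate theory $E_{1}$. (The case $n=0$ is immediate.) For the inductive step, assume we have an $\E_{\infty}$-ring map $K^{(m)}(\bar{k})\to E_{m}$ into a height-$m$ Lubin--Tate theory. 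Applying $K$ gives $K^{(m+1)}(\bar{k})=K\big(K^{(m)}(\bar{k})\big)\to K(E_{m})$, and $L_{T(m+1)}K(E_{m})\neq 0$ by \Cref{thm:Eredshift}, so the first paragraph gives $L_{T(m+1)}K^{(m+1)}(\bar{k})\neq 0$ --- the theorem for $m+1$. To propagate the hypothesis we additionally need an $\E_{\infty}$-ring map $K(E_{m})\to E_{m+1}$ into a height-$(m+1)$ Lubin--Tate theory; granting it, compose $K^{(m+1)}(\bar{k})\to K(E_{m})\to E_{m+1}$. Tracing through, there is then a map $K^{(n)}(k)\to K^{(n)}(\bar{k})\to K^{(n-1)}(E_{1})\to\cdots\to K(E_{n-1})\to E_{n}$, and $L_{T(n)}E_{n}\neq 0$ finishes it.

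The main obstacle is precisely the construction of the maps $K(E_{m})\to E_{m+1}$. Concretely, one needs more than the stated non-vanishing $L_{T(m+1)}K(E_{m})\neq 0$: one wants an actual $\E_{\infty}$-ring map from $K(E_{m})$ to a height-$(m+1)$ Lubin--Tate theory. I expect that \Cref{thm:Eredshift} is in fact proved by producing exactly such a map --- so that the displayed non-vanishing follows formally from the first paragraph --- in which case \Cref{thm:iteratedK} is the clean bootstrap above; failing that, one would need separately that $L_{K(m+1)}K(E_{m})$ is nonzero and that every nonzero $K(m+1)$-local $\E_{\infty}$-ring admits an $\E_{\infty}$-map to some Lubin--Tate theory, which together upgrade the conclusion of \Cref{thm:Eredshift} to the map required to keep the induction running.
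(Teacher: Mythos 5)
Your first paragraph is fine --- it is exactly the observation the paper records as \Cref{rmk:ringmap} --- and the reduction to $\bar{k}$ and the $n=1$ Suslin step are sound. The problem is the inductive step, and the honesty of your last paragraph is well-placed: the maps $K(E_m)\to E_{m+1}$ you need do not come from \Cref{thm:Eredshift}, and there is no apparent way to manufacture them.

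Concretely, the paper's proof of \Cref{thm:Eredshift} builds a ring map going in the \emph{opposite} direction from the one you want: it produces an $\E_\infty$-map $E_n\to L_{K(n)}R^{\sh}_{\p}$ (where $R=E_{n+1}^{tC_p}$), using the Goerss--Hopkins--Miller/Lurie universal property (\Cref{thm:Eunivprop}), which characterizes maps \emph{out of} $E_n$ into $K(n)$-local complex-orientable rings. One then concludes $L_{T(n+1)}K(E_n)\neq0$ from $K(E_n)\to K(L_{K(n)}R^{\sh}_{\p})$ together with the Dennis trace, the Tate orbit lemma, and étale descent. Nowhere is a map $K(E_n)\to E_{n+1}$ constructed, and producing one is a much stronger statement than the mere nonvanishing of $L_{T(n+1)}K(E_n)$. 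Your suggested fallback --- that every nonzero $K(m+1)$-local $\E_\infty$-ring admits an $\E_\infty$-map to some Lubin--Tate theory --- is also unjustified: the GHM/Lurie theorem gives no handle on maps \emph{into} Lubin--Tate theories from an arbitrary ring, so you cannot ``upgrade the conclusion of \Cref{thm:Eredshift}'' this way. Without the maps $K(E_m)\to E_{m+1}$ the chain collapses and the induction never runs past $n=1$.

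The paper's actual route (\Cref{sec:iteratedK}) is genuinely different and sidesteps this obstacle. It takes a single iterated Tate construction $R=\tp^{(n)}E_n$, shows that $L_n^{p,f}R$ is rational (so that $\Q(\zeta_p)\to L_{T(0)}R$ exists, after the $\Phi^{C_p^{\times n}}$ computation providing a primitive $p$th root of unity), and hence via \Cref{thm:LMMT} one obtains a ring map $L_{T(n)}K^{(n)}(\Q(\zeta_p))\to L_{T(n)}K^{(n)}(R)$. The target is shown nonzero by an $n$-fold iteration of the Dennis trace plus the Tate orbit lemma, landing in $R^{h(S^1/C_p)^{\times n}}$. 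The passage from $\Q(\zeta_p)$ to general $k$ then uses Suslin rigidity, the Bhatt--Clausen--Mathew computation of $L_{T(1)}K(\Q(\zeta_{p^\infty}))$, and $T(n)$-local Galois descent for $p$-groups (\Cref{thm:pgroupdescent}) --- not a bare map $k\hookrightarrow\bar{k}$ followed by a chain through Lubin--Tate theories. So the gap in your proposal is precisely the ``main obstacle'' you flag, and it is not one that \Cref{thm:Eredshift} or its proof fills.
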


We remark that the question of height shifting has recently been independently addressed by Hahn and Wilson \cite{HahnWilson} in the case of truncated Brown-Peterson spectra $BP\langle n\rangle$; their approach is more computational in nature, and further proves Lichtenbaum-Quillen-type statements in this case.  The nonvanishing of $L_{T(n+1)}K(BP\langle n\rangle)$ is closely related to \Cref{thm:Eredshift}, though we do not know if either result implies the other.  
\Cref{thm:Eredshift} was previously known in the case $n=1, p\geq 5$ by \cite{Ausoni, AusRog, BlumMan}, and the cases $p=2,3$ also follow from \cite{HahnWilson}.

\Cref{thm:iteratedK} addresses another case of interest in Rognes's redshift conjectures, that of iterated $K$-theory (of fields), in a form stated by Barwick \cite{BarMultAKT}.  There has been previous work on this case by \cite{Veen, Angelini-Knoll, LSW, BCD}, and the theorem answers the $K$-theory variant of a question of Hesselholt-Madsen about the chromatic complexity of $\mathrm{TC}^{(n)}(\F_p)$ \cite[\S 4]{HM}.  

 The $n$-fold iterated $K$-theory of a field $k$ is related to an $(n-1)$-fold categorification of the category of $k$-vector spaces (cf. \cite{RognesMSRI}) -- for instance, when $n=2$, the work of Baas, Dundas, Richter and Rognes expresses $K(K(k))$ as a classifying space for certain $k$-linear categories \cite[Theorem 1.1]{BDRR}.  Moreover, motivated by the work of Ausoni-Rognes \cite{AusRog}, Baas-Dundas-Rognes describe the cohomology theory $K(ku)$ (which is $T(2)$-locally equivalent to $K(K(\mathbb{C}))$) geometrically via virtual $2$-vector bundles \cite{BDR}.   \Cref{thm:iteratedK} asserts that this relatively tangible and geometric categorification procedure produces cohomology theories of arbitrary height.  In addition, we remark that the proof of \Cref{thm:iteratedK} is at least in part ``effective,'' as it describes a ring map from $K^{(n)}(\Q)$ to rings constructed out of arbitrary height Lubin-Tate theories; in particular, at height $2$, one obtains $\E_{\infty}$-ring maps from $K(K(\Q))$ to certain elliptic cohomology theories. 

\begin{rmk}\label{rmk:dhsnilp}
The results in this paper hold equally well with $L_{T(n)}$ replaced by the Bousfield localization functors $L_{K(n)}$ at a Morava $K$-theory at height $n$.  The distinction between $T(n)$ and $K(n)$ is invisible to our discussion because a \emph{ring} spectrum is $T(n)$-acyclic if and only if it is $K(n)$-acyclic, essentially by the Devinatz-Hopkins-Smith nilpotence theorem (cf. \cite[Lemma 2.3]{LMMT},  \cite[\S 4.4]{CSYteleambi}).
\end{rmk}

\subsection{Outline of methods}

Suppose $E$ is a $T(n)$-local $\E_{\infty}$-ring.  Then one can consider the $\E_{\infty}$-ring $E^{tC_p}$, the Tate cohomology of $C_p$ acting trivially on $E$.  A theorem of Kuhn (cf. \Cref{sec:unblueshift}) implies that $L_{T(n)}E^{tC_p} \simeq 0$, so $E^{tC_p}$ is an $\E_{\infty}$-ring of height at most $n-1$ (a phenomenon dubbed ``chromatic blueshift'').  The foundation for our examples is the following result, which asserts that \emph{chromatic redshift} occurs for rings of the form $E^{tC_p}$:

\begin{thmx} \label{thm:main}
Let $E$ be an $\E_{\infty}$-ring spectrum such that $L_{T(n)} E $ is nonzero.  Then $L_{T(n)}K(E^{tC_p})$ is nonzero.
\end{thmx}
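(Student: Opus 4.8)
The plan is to exhibit a unital $\E_{\infty}$-ring map from $K(E^{tC_p})$ to a \emph{nonzero} $T(n)$-local ring. Since a $T(n)$-local ring equals its own $T(n)$-localization, any such map induces a unital ring map $L_{T(n)}K(E^{tC_p})\to R$ with $R$ nonzero, which is impossible if the source vanishes. As a first step I would reduce to the case that $E$ is $T(n)$-local: by the theorem of Land--Mathew--Meier--Tamme that $L_{T(n)}K(-)$ of an $\E_1$-ring depends only on its $L_{T(n-1)\vee T(n)}$-localization, it suffices to know that $L_{T(n-1)\vee T(n)}(E^{tC_p})$ is determined by $L_{T(n-1)\vee T(n)}E$, and this is the kind of statement controlled by Kuhn's blueshift theorem; after the reduction $E$ is a nonzero $T(n)$-local $\E_{\infty}$-ring.

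For orientation: with $E$ now $T(n)$-local, Kuhn's theorem gives $L_{T(n)}(E^{tC_p})\simeq 0$, so $E^{tC_p}$ has height at most $n-1$, while the sharp form of blueshift gives $L_{T(n-1)}(E^{tC_p})\not\simeq 0$, retaining the height-$n$ part of $E$. Then \Cref{thm:introCMNN} bounds the height of $K(E^{tC_p})$ above by $n$, and the whole content of the theorem is that this bound is achieved.

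The crux is to produce the height-$n$ class in $K(E^{tC_p})$, and here a genuinely new input --- an ``un-blueshift'' --- is needed. One natural route is through the cyclotomic trace $K(E^{tC_p})\to\mathrm{TC}(E^{tC_p})$, whose fiber is of low chromatic height for $n\ge 2$ by the purity results of \cite{CMNN, LMMT}, together with the Nikolaus--Scholze description $\mathrm{TC}=\mathrm{fib}\bigl(\varphi-\mathrm{can}\colon\mathrm{TC}^{-}\to\mathrm{TP}\bigr)$: the point is that, although $E^{tC_p}$ itself has dropped to height $n-1$, the cyclotomic spectrum $\THH(E^{tC_p})$ with its Frobenius still remembers $\THH(E)$, which sits at height $n$, so the infinite Frobenius iteration packaged into $\mathrm{TC}$ lets a $v_{n}$-periodic class built out of $E$ survive. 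Concretely I would aim to construct, starting from the Tate-valued Frobenius of $E$ and the Segal-conjecture-type identification of its target, an $\E_{\infty}$-ring map from $K(E^{tC_p})$ to a nonzero $T(n)$-local $\E_{\infty}$-ring assembled functorially out of $E$ --- and in the applications this becomes the promised map to rings built from Lubin--Tate theories. The low cases are easier: $n=0$ is immediate since $K_{0}\ne 0$, and for $n=1$ one uses that $E^{tC_p}$ is then of height $0$ and invokes the classical (\'etale, Quillen--Lichtenbaum) nonvanishing of $K(1)$-local algebraic $K$-theory of such rings.

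The step I expect to be the main obstacle is exactly this construction: isolating the surviving $v_{n}$-periodic class, equivalently pinning down the nonzero $T(n)$-local target and the $\E_{\infty}$-ring map to it. I anticipate that this rests on a careful but essentially non-computational use of the Segal-conjecture-type identification of the Tate-valued Frobenius, combined with the descent and purity machinery of \cite{CMNN} and \cite{LMMT}, rather than on explicit homotopy-group computations. A smaller but genuine technical point is verifying, in the first step, that blueshift interacts with the Bousfield localizations as claimed, so that the reduction to $T(n)$-local $E$ is justified.
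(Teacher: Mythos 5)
Your overall architecture is right (find a nonzero $T(n)$-local $\E_\infty$-ring receiving a ring map from $K(E^{tC_p})$, via trace methods), and you correctly smell that the key new input is an ``un-blueshift.'' But the proposal has a genuine gap and a reduction that goes the wrong way.

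The reduction you propose is to replace $E$ by its $T(n)$-localization, appealing to Land--Mathew--Meier--Tamme. The paper's reduction instead replaces $E$ by its \emph{connective cover} $e = \tau_{\geq 0}E$, using \Cref{thm:LMMT} together with the observation (\Cref{lem:covertate}) that for a coconnective spectrum $X$ the Tate construction $X^{tC_p}$ is $T(n)$-acyclic for all $n$, so $K(e^{tC_p}) \to K(E^{tC_p})$ is a $T(n)$-equivalence. This direction of reduction is not cosmetic: the crucial input downstream, the Tate orbit lemma of Nikolaus--Scholze, needs a \emph{bounded-below} spectrum. Your reduction produces a $T(n)$-local (hence periodic, non-connective) $E$, for which the Tate orbit lemma fails --- the paper even points this out explicitly with the example $KU_p^{tC_p}$ rational, so $(KU_p^{tC_p})^{hS^1/C_p}$ stays rational and is \emph{not} $p$-completed $KU_p^{tS^1}$. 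So as stated your first step would block the later argument.

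The bigger gap is that you never actually identify the ``un-blueshift'' mechanism; you explicitly flag it as the main obstacle and reach for TC, the Frobenius, and the Segal conjecture, none of which are used. The paper's route is shorter and entirely avoids $\mathrm{TC}$, $\mathrm{TC}^-$, $\mathrm{TP}$, cyclotomic Frobenii, and the Segal conjecture. It uses only the Dennis trace to negative cyclic homology, the universal property of $\THH$ for $\E_\infty$-rings, and the Tate orbit lemma: with $e$ connective, the Dennis trace gives a ring map $K(e^{tC_p}) \to \THH(e^{tC_p})^{hS^1}$; since $\THH(A)$ is the colimit of the constant $S^1$-diagram at $A$ in $\CAlg$, the identity $e^{tC_p} \to e^{tC_p}$ (with the residual $S^1/C_p$-action on the target) extends to an $S^1$-equivariant ring map $\THH(e^{tC_p}) \to e^{tC_p}$, hence after fixed points to $\THH(e^{tC_p})^{hS^1} \to (e^{tC_p})^{hS^1/C_p}$; and the Tate orbit lemma identifies the target with the $p$-completion of $e^{tS^1}$. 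Finally $e^{tS^1}$ is $T(n)$-locally nonzero because, after tensoring with $K(n)$ to make things complex orientable, $E$ is a retract of $E^{tS^1}$. That retraction $E \to E^{tS^1}$ is exactly the ``surviving $v_n$-periodic class'' you were hunting for; your proposal never finds it, and the machinery you point toward (the purity results of \cite{CMNN}, \cite{LMMT} bounding the fiber of the cyclotomic trace) is not part of the argument and would be much heavier to run.
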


The proof uses the following elementary observation:

\begin{rmk}\label{rmk:ringmap}
Note that the zero ring admits no nonzero modules.  Thus, if $A \to B$ is a map of ring spectra and $B$ is nonzero, then $A$ is nonzero.  In particular, in order to show a ring $R$ is $T(n)$-locally nonzero, it suffices to produce a ring map from $R$ to another ring which is $T(n)$-locally nonzero.  We will apply this observation throughout this paper.  
\end{rmk}

We now outline the proof of \Cref{thm:main}, with the full proof to be given in \Cref{sec:proofmain}:


\begin{enumerate}
\item First, we reduce to the case when $E$ is connective. \label{red1}
\item The Dennis trace provides a map of rings $K(E^{tC_p}) \to \THH(E^{tC_p})^{hS^1}.$  
\item Using the universal property of $\THH$ in $\E_{\infty}$-rings, we produce a map of $\E_{\infty}$-rings $$\THH(E^{tC_p})^{hS^1} \to (E^{tC_p})^{hS^1/C_p}.$$  Here, the action of $S^1/C_p$ on $E^{tC_p}$ is the residual action, arising because the trivial action of $C_p$ on $E$ can be extended to a trivial action of $S^1$.  
\item Since $E$ is connective by (\ref{red1}), the ``Tate orbit lemma'' of Nikolaus-Scholze \cite{NS} provides a $p$-complete equivalence $(E^{tC_p})^{hS^1/C_p} \simeq_p E^{tS^1}$.  We will see that because $L_{T(n)}E \neq 0$ by assumption, it follows that $L_{T(n)}E^{tS^1} \neq 0$ (for instance, when $E$ is complex oriented, $E$ is a summand of $E^{tS^1}$).
This step is where one can ``see the redshift happen," and it will be explained in detail in \Cref{sec:unblueshift}.  
\item Combining the above steps and applying \Cref{rmk:ringmap}, we conclude that $K(E^{tC_p})$ is $T(n)$-locally nontrivial, since it admits a ring map to a $T(n)$-locally nontrivial ring.  
\end{enumerate}

Applying \Cref{rmk:ringmap}, \Cref{thm:main} can be leveraged to deduce chromatic redshift for rings mapping to $E^{tC_p}$ for appropriately chosen $E$. 

For the case of Lubin-Tate theories (\Cref{thm:Eredshift}), the idea is to attempt to construct a ring map from $E_n \to E_{n+1}^{tC_p}$ for a height $n+1$ Lubin-Tate theory $E_{n+1}$.  It turns out that this can be done ``up to \'{e}tale extension," which then suffices by applying the descent results of Clausen-Mathew \cite{CM}; we thank Akhil Mathew for explaining this point to us.  We give the details in \Cref{sec:proofEredshift}.

We prove \Cref{thm:iteratedK} in \Cref{sec:iteratedK}.  For the case $k=\Q$ (and in fact, $k=\Q(\zeta_p)$), one roughly uses the $n$-fold iterate of the proof of \Cref{thm:main}.  The proof then uses an argument of Akhil Mathew to extend to the general case: we are grateful to him for explaining this to us and allowing us to reproduce his arguments, which appear in \Cref{sub:generalk}.  


\subsection{Acknowledgments} 

The author would like to thank Andrew Blumberg, Robert Burklund, Shachar Carmeli, Jeremy Hahn, Jacob Lurie, Akhil Mathew, and Arpon Raksit for numerous helpful suggestions and conversations related to this material.  Special thanks are due to Akhil Mathew for generously contributing the ideas in \Cref{lem:stalk} and \Cref{sub:generalk} and allowing us to use his arguments in this paper, and to Jeremy Hahn and Arpon Raksit for asking about the case of iterated $K$-theory.  The author was supported in part by NSF grant DMS-2002029.

\section{Recollections on the Tate orbit lemma}\label{sec:unblueshift}
Roughly, our approach to demonstrating chromatic redshift is by showing that any blueshift (i.e., lowering of chromatic height) arising from Tate cohomology is reversed by algebraic $K$-theory.  The purpose of this (entirely expository) section is to explain the mechanism for this ``un-blueshift.''  The key idea is the Tate orbit lemma of Nikolaus-Scholze \cite{NS}, which we review in \Cref{sub:tateorbit}.

\subsection{Tate cohomology and blueshift}
Let $X$ be a spectrum with the action of a finite group $G$.  Then one can consider $X^{tG}$, the Greenlees-May Tate cohomology of $G$ with coefficients in $X$ \cite{GreenleesMay} , which is defined via the cofiber sequence:
\[
X_{hG} \xrightarrow{\mathrm{tr}} X^{hG} \to X^{tG}.
\]
Here, the map $\mathrm{tr}$ is the additive transfer, which can be thought of as taking an orbit $\overline{x}$ to its sum $\sum_g gx$.  For us, the group $G$ will most often be $C_p$, the cyclic group of $p$ elements, and unless otherwise stated, the operation $(-)^{tG}$ will be taken with respect to the trivial action.  We remark also that $(-)^{tG}$ is lax symmetric monoidal \cite[Theorem I.3.1]{NS}, and thus takes $\E_{\infty}$-rings to $\E_{\infty}$-rings.  

From work of  Greenlees, Hovey, Sadofsky, and Kuhn \cite{GS96, HS96, Kuhn}, it became understood that Tate cohomology exhibited a ``blueshift" phenomenon: it tends to shift chromatic height down.  More precisely, Kuhn proved the following theorem:

\begin{thm}[Kuhn \cite{Kuhn}]\label{thm:Kuhn}
Let $n\geq 0$ and let $X$ be a $T(n)$-local spectrum with an action of $C_p$.  Then $L_{T(n)}X^{tC_p} \simeq 0$.  
\end{thm}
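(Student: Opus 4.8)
The plan is to reduce the statement to one about finite coefficient complexes and then to invoke the chromatic ``blueshift'' computation for the Tate construction on $BC_p$; this last ingredient is the substance of Kuhn's theorem, so in this expository section I would ultimately cite \cite{Kuhn} (and \cite{GS96,HS96} for the $K(n)$-local precursors), but the reduction is worth recording. Recall that $L_{T(n)}W\simeq 0$ is equivalent to $T(n)\otimes W\simeq 0$, that $T(n)=v_n^{-1}V$ is the mapping telescope of a $v_n$-self-map $v\colon\Sigma^{d}V\to V$ on a finite complex $V$ of type $n$, and that $(-)^{tC_p}$, being exact, commutes with smashing against the finite complex $V$. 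Hence
\[
T(n)\otimes X^{tC_p}\;\simeq\;v_n^{-1}\bigl((V\otimes X)^{tC_p}\bigr),
\]
the transition maps in the colimit being induced by $v$. But $\mathrm{cofib}(v\otimes\mathrm{id}_X)\simeq\mathrm{cofib}(v)\otimes X$, and $\mathrm{cofib}(v)$ is a finite $T(n)$-acyclic complex (in fact of type $n+1$); since $X$ is $T(n)$-local and a finite $T(n)$-acyclic complex smashes to zero with any $T(n)$-local spectrum, we get $\mathrm{cofib}(v\otimes\mathrm{id}_X)\simeq 0$. So $v\otimes\mathrm{id}_X$ is an equivalence, every transition map above is an equivalence, and the problem is reduced to showing $(V\otimes X)^{tC_p}\simeq 0$.

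For this I would use the norm cofiber sequence, which for the trivial $C_p$-action reads
\[
(V\otimes X)\otimes\Sigma^{\infty}_{+}BC_p\;\xrightarrow{\ \mathrm{tr}\ }\;F\bigl(\Sigma^{\infty}_{+}BC_p,\,V\otimes X\bigr)\;\longrightarrow\;(V\otimes X)^{tC_p},
\]
so it suffices to show the transfer $\mathrm{tr}$ is an equivalence --- i.e.\ that $C_p$ is ``ambidextrous'' on $v_n$-periodic type-$n$ coefficients (equivalently, that $\Sp_{T(n)}$ is $1$-semiadditive). The mechanism is a transfer/Euler-characteristic argument organized along the skeletal filtration of $BC_p$: in the $K(n)$-local case Greenlees--Sadofsky \cite{GS96} use the classical computation that $K(n)^{*}(BC_p)\cong K(n)^{*}[\![x]\!]/[p](x)$ is a free $K(n)^{*}$-module of rank $p^{n}$ to conclude that the relevant Euler characteristic of $BC_p$ is a unit. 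Kuhn's theorem is the telescopic upgrade of this, with the Morava $K$-theory bookkeeping replaced by the vanishing and periodicity of the operators $v_m$ ($m\neq n$) on finite type-$n$ complexes afforded by the Hopkins--Smith periodicity and nilpotence theorems.

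The main obstacle is precisely this last step. One cannot simply pass to $K(n)$-localizations and transport the Greenlees--Sadofsky vanishing back, since that would require the telescope conjecture; the argument has to be run telescopically throughout, and this is where the full strength of chromatic homotopy theory is needed. Accordingly, I would present \Cref{thm:Kuhn} by recording the reduction above and then citing \cite{Kuhn}, emphasizing that this ``un-blueshift'' is exactly what gets paired, in the sections that follow, with the Tate orbit lemma to produce the redshift examples.
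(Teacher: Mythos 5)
Your proposal is correct and, like the paper, ultimately rests on citing Kuhn \cite{Kuhn}: the paper states \Cref{thm:Kuhn} with no proof at all (the section is explicitly ``entirely expository''), so your decision to record a reduction and then invoke \cite{Kuhn} is essentially the same approach. The extra reduction to $(V\otimes X)^{tC_p}\simeq 0$ is sound (finiteness of $V$ lets $(-)^{tC_p}$ commute past it, and the transition maps are equivalences because a finite $T(n)$-acyclic smashed with a $T(n)$-local spectrum vanishes), and your remark that one cannot transport the Greenlees--Sadofsky $K(n)$-local vanishing back to the telescopic setting without the telescope conjecture is a correct and worthwhile caveat; just note that your norm cofiber sequence is written for the trivial action while the theorem as stated allows an arbitrary $C_p$-action, though this does not affect the reduction or the citation.
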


\begin{rmk}\label{rmk:cxorientedtCp}
Suppose that $E$ is a complex oriented ring spectrum such that $[p](x)$, the $p$-series of its associated formal group, is not a zero divisor in $E_*[[x]]$.  Then, one can compute the homotopy groups of $E^{tC_p}$ as:
\[
\pi_*E^{tC_p} \cong E_* ((x))/[p](x), \quad |x| = -2.
\]
\end{rmk}

This description of $E^{tC_p}$ can be used to understand Kuhn's theorem more concretely in some cases, as the following example illustrates:

\begin{exm}\label{exm:kutcp}
If $E$ is the $p$-complete complex $K$-theory spectrum $KU_p$, then while $KU_p$ is $T(1)$-local, we have
\[
\pi_* KU_p^{tC_p} \simeq \frac{\Z_p[\beta^{\pm 1}]((x))}{\beta^{-1}((1+\beta x)^p - 1)} \simeq \Q_p(\zeta_p)((x)),
\]
so $KU_p^{tC_p}$ is rational (the key point being that, using the relation, inverting $x$ also inverts $p$).  The last identification comes from noting that $1+\beta x$ becomes a primitive $p$th root of unity $\zeta_p$.  
\end{exm}

\subsection{The Tate orbit lemma and reversing blueshift}\label{sub:tateorbit}

Let $X$ be a spectrum.  We saw in the previous section that we may form $X^{tC_p}$, where $X$ is given the trivial $C_p$ action.  However, the trivial action of $C_p$ extends to a trivial action of the circle $S^1$: it follows that we can regard $X$ as having trivial $S^1$-action and regard $X^{tC_p}$ as the Tate cohomology with respect to the subgroup $C_p \subset S^1$.  As such, $X^{tC_p}$ acquires a residual action of the quotient $S^1/C_p$ (which is isomorphic to $S^1$).  

This residual action of $S^1/C_p$ is critical to our approach: we will see that in the case when $X$ is a connective ring spectrum, while $X^{tC_p}$ can be of lower height than $X$, the homotopy fixed points $(X^{tC_p})^{hS^1/C_p}$ are always of the same height as $X$ (cf. \Cref{rmk:unblueshift}).  We first illustrate this with an example:

\begin{exm}\label{exm:Zcase}
Consider the case $X = \Z$; then $$\pi_*\Z^{tC_p} = \F_p((x)), \quad |x| = -2.$$  It will be instructive to think of the ring $\Z^{tC_p}$ as having height $-1$ because $v_0 := p$ vanishes.  Note that there is a canonical $S^1/C_p$-equivariant map $\Z^{hC_p} \to \Z^{tC_p}$, which induces a map
\begin{equation}\label{eqn:zhcp}
(\Z^{hC_p})^{hC_{p^2}/C_p} \to (\Z^{tC_p})^{hC_{p^2}/C_p}.
\end{equation}
But the source can be identified with $\Z^{hC_{p^2}}$, which has homotopy groups $\Z[[x]]/p^2x$, $|x|=-2$.  Comparing the homotopy fixed point spectral sequences in (\ref{eqn:zhcp}), one deduces that the spectral sequence in the target adds a class $y$ (coming from $H^2(C_{p^2}/C_p;\F_p)$) which satisfies the equation $y=px$ (via an additive extension in the spectral sequence).  It follows that 
\[
\pi_*(\Z^{tC_p})^{hC_{p^2}/C_p} \simeq \Z/p^2 ((x)), \quad |x|= -2.
\]
Thus, while $\Z \mapsto \Z^{tC_p}$ sends $v_0=p$ to zero, taking homotopy fixed points by $C_{p^2}/C_p$ starts to add back the powers of $p$.  In fact, one can do a similar computation for the subgroups $C_{p^k}/C_p \subset S^1/C_p$ and $S^1/C_p$ itself:
\begin{align*}
\pi_* (\Z^{tC_p})^{hC_{p^k}/C_p} &= \Z/p^k((x)), \quad |x|= -2 \\
\pi_* (\Z^{tC_p})^{hS^1/C_p} &= \Z_p ((x)),  \quad |x|= -2. 
\end{align*}

Hence, taking fixed points for the action of $S^1/C_p$ passes from characteristic $p$ back to characteristic $0$, which we can view as height shifting from height $-1$ to height $0$.  
\end{exm}

In fact, the phenomenon observed in the above example is a special case of the following result of Nikolaus-Scholze:

\begin{prop}[Tate orbit lemma, \cite{NS}, Lemma II.4.2]\label{prop:tateorbit}
Let $X$ be a bounded below spectrum with $S^1$-action.  Then there is a natural map $X^{tS^1} \to (X^{tC_p})^{hS^1/C_p}$ which exhibits the target as the $p$-completion of the source.  
\end{prop}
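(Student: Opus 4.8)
The plan is to construct the comparison map and then reduce the statement to the Tate orbit lemma for the finite group $C_{p^2}$. First, the target is already $p$-complete: for bounded below $X$ the Tate construction $X^{tC_p}$ is $p$-complete -- the Tate construction of a finite $p$-group carries bounded below spectra to $p$-complete spectra -- and homotopy fixed points preserve $p$-completeness. Hence it suffices to produce a natural map $X^{tS^1}\to(X^{tC_p})^{hS^1/C_p}$ and show that its fiber is $p$-acyclic, i.e.\ killed by smashing with $S/p$.

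To build the map, I would apply $(-)^{hS^1/C_p}$ to the cofiber sequence $X_{hC_p}\xrightarrow{\ \mathrm{Nm}\ }X^{hC_p}\to X^{tC_p}$ of spectra with their residual $S^1/C_p$-action. Using the canonical equivalences $(X^{hC_p})^{hS^1/C_p}\simeq X^{hS^1}$ and $(X_{hC_p})_{hS^1/C_p}\simeq X_{hS^1}$ (iterated homotopy fixed points and orbits for $1\to C_p\to S^1\to S^1/C_p\to1$), together with the multiplicativity of norm maps along this extension -- the only bookkeeping being that the dimensions $0$ of $C_p$ and $1$ of $S^1/C_p$ add up to the dimension $1$ of $S^1$ -- one checks that the $S^1$-norm $\Sigma X_{hS^1}\to X^{hS^1}$ factors as $\Sigma X_{hS^1}\xrightarrow{\ \mathrm{Nm}_{S^1/C_p}\ }(X_{hC_p})^{hS^1/C_p}\to X^{hS^1}$. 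Comparing the cofiber sequence $\Sigma X_{hS^1}\to X^{hS^1}\to X^{tS^1}$ with $(-)^{hS^1/C_p}$ of the one above produces the desired natural transformation, and the octahedral axiom identifies its fiber (up to a shift) with $\mathrm{cofib}(\mathrm{Nm}_{S^1/C_p})=(X_{hC_p})^{tS^1/C_p}$, the $S^1/C_p$-Tate construction of the bounded below spectrum $X_{hC_p}$. So everything comes down to showing this is $p$-acyclic.

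This last assertion -- an $S^1$-version of the Tate orbit lemma -- is the step I expect to require real work. The claim is: for any bounded below $Y$ with $S^1$-action, $(Y_{hC_p})^{tS^1/C_p}$ is $p$-acyclic (take $Y=X$). Since the $S^1$-Tate construction and $(-)_{hC_p}$ both commute with smashing against the finite spectrum $S/p$, it suffices to show $((Y/p)_{hC_p})^{tS^1/C_p}\simeq 0$, i.e.\ to treat bounded below $S/p$-modules. For such $Y$ I would proceed in three steps. \emph{(i)} Reduce to Eilenberg--MacLane spectra $Y=\Sigma^jHM$ by a Postnikov devissage: the functor $Y\mapsto(Y_{hC_p})^{tS^1/C_p}$ is exact, and because $(-)_{hC_p}$ preserves connectivity the whole construction commutes with the Postnikov limit $Y=\lim_n\tau_{\le n}Y$ (the error terms, being homotopy orbits of $n$-connected spectra, have connectivity tending to $\infty$), so the reduction converges. \emph{(ii)} Replace the circle by its finite cyclic $p$-subgroups: since $B\mu_{p^\infty}\hookrightarrow BS^1$ is a $p$-adic equivalence and our spectra are now $p$-complete, $(-)^{hS^1/C_p}$ is the limit over $k$ of the functors $(-)^{h(C_{p^k}/C_p)}$, so it is enough to prove $(Y_{hC_p})^{t(C_{p^k}/C_p)}\simeq0$ for each $k$. \emph{(iii)} Prove the latter by induction on $k$: the base case $k=2$ is the Tate orbit lemma for $1\to C_p\to C_{p^2}\to C_p\to1$, checked by computing the homotopy-orbit and homotopy-fixed-point spectral sequences of $(HM)_{hC_p}$ with its residual $C_{p^2}/C_p$-action and seeing that the relevant norm map is an equivalence (the additive-extension bookkeeping is of the kind visible in \Cref{exm:Zcase}); the inductive step reorganizes $(Y_{hC_p})^{t(C_{p^k}/C_p)}$ as a Tate construction of $Y_{hC_{p^{k-1}}}$ and applies the previous case.

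I expect the principal obstacles to be: (a) identifying and keeping track of the residual $S^1/C_p$-action throughout -- its nontriviality is the entire source of the vanishing, since with the trivial action on $Y_{hC_p}$ the Tate construction would \emph{not} be $p$-acyclic -- and (b) the convergence and limit-interchange arguments in step \emph{(ii)}; it is exactly here that one obtains only a $p$-completion rather than an honest equivalence, as already witnessed by $\Z^{tS^1}$ having $\pi_0=\Z$ while $(\Z^{tC_p})^{hS^1/C_p}$ has $\pi_0=\Z_p$ (\Cref{exm:Zcase}). The base-case spectral sequence computation in \emph{(iii)} is elementary but must be carried out with care about that residual action.
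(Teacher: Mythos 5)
Your construction of the comparison map via the norm cofiber sequences, and the octahedral identification of its fiber with a shift of $(X_{hC_p})^{tS^1/C_p}$, is correct, and the overall plan — verify the target is $p$-complete, then show the fiber is $p$-acyclic by smashing with $S/p$ and performing a devissage down to Eilenberg--MacLane spectra — matches the Nikolaus--Scholze argument that the paper cites; the paper itself gives only a one-line sketch and defers to \cite{NS}. The devissage is where the real content lies, and your treatment of it has gaps at each of the three steps.

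In step (i) the interchange of the Postnikov limit with the functor is not justified by the connectivity argument you give: $(-)_{hC_p}$ preserves connectivity, but $(-)^{tS^1/C_p}$ does not — for instance $\pi_j\bigl((\Sigma^{n}H\F_p)^{tS^1}\bigr)\neq 0$ for every $j\equiv n\pmod 2$ — so the error terms $\bigl((\tau_{\geq n+1}Y)_{hC_p}\bigr)^{tS^1/C_p}$ are not highly connected and their limit over $n$ is not visibly zero. The interchange is true for bounded-below $Y$, but it requires the apparatus Nikolaus--Scholze develop for exactly this purpose (approximating the Tate construction by constructions built from finite representation spheres), and it is the real content behind the paper's phrase ``the statement commutes with limits of Postnikov towers.'' In step (ii), the fact that $(-)^{hS^1/C_p}\simeq\lim_k(-)^{hC_{p^k}/C_p}$ on suitable $p$-complete spectra does not directly yield the analogous statement for the Tate construction: the $S^1$-norm carries a shift that the finite norms do not, and homotopy orbits do not commute with the inverse limit over $k$, so relating $(-)^{tS^1/C_p}$ to the tower $\{(-)^{tC_{p^k}/C_p}\}_k$ is a genuine additional lemma rather than a formal consequence. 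In step (iii), the inductive step ``reorganizes $(Y_{hC_p})^{tC_{p^k}/C_p}$ as a Tate construction of $Y_{hC_{p^{k-1}}}$'' is not an identity; by the octahedral axiom these constructions are related only through a cofiber sequence, which is not enough to run the induction in the form you describe. The base-case spectral-sequence computation is the right endpoint, but the path to it needs these repairs.
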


We refer the reader to \cite[Lemma II.4.2]{NS} for the proof.  The idea is that the statement commutes with limits of Postnikov towers, so one reduces to the case when $X$ is discrete, in which case the statement can be understood directly in the spirit of \Cref{exm:Zcase}.  

Our interest in \Cref{prop:tateorbit} stems from the fact that unlike $(-)^{tC_p}$, the functor $(-)^{tS^1}$ tends to \emph{not} lower chromatic height (as illustrated in \Cref{exm:Zcase}):

\begin{prop}\label{prop:tS1height}
Suppose $E$ is a homotopy commutative ring spectrum such that $L_{T(n)}E \not\simeq 0$.  Then $L_{T(n)}E^{tS^1}\not\simeq 0$.  
\end{prop}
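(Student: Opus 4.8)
The plan is to reduce to the case where $E$ is complex orientable, where the circle action on $E$ is rigid enough that $E$ is literally a retract of $E^{tS^1}$, and then deduce the general case by base change along the unit map $E \to E \wedge MU$, using that $MU$ is chromatically ``detecting'' at every height.

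First I would carry out the reduction. Since $(-)^{tS^1}$ is lax symmetric monoidal (just as $(-)^{tC_p}$ is), the ring map $E \to E \wedge MU$ induces a map of homotopy commutative ring spectra $E^{tS^1} \to (E \wedge MU)^{tS^1}$, so by \Cref{rmk:ringmap} it suffices to show $L_{T(n)}(E \wedge MU)^{tS^1} \neq 0$; that is, we may replace $E$ by $E \wedge MU$. This replacement is harmless in two ways. It preserves $T(n)$-local nontriviality: by \Cref{rmk:dhsnilp} the hypothesis $L_{T(n)}E \neq 0$ is equivalent to $K(n)_*E \neq 0$, and since $K(n)$ is a field spectrum one has a K\"unneth isomorphism $K(n)_*(E \wedge MU) \cong K(n)_*E \otimes_{K(n)_*} K(n)_*MU$, which is nonzero because $K(n)_*MU \neq 0$; hence $L_{T(n)}(E \wedge MU) \neq 0$, again by \Cref{rmk:dhsnilp}. (This is precisely where the chromatic non-triviality of $MU$ is used, and it is why $H\mathbb{Z}$, which is $K(n)$-acyclic for $n\geq 1$, would not do.) And it makes the ring complex orientable, since $E \wedge MU$ is an algebra over $MU$.

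So assume now that $E$ is a complex orientable homotopy commutative ring with $L_{T(n)}E \neq 0$, and fix an orientation with Euler class $x$ of degree $-2$. The orientation splits the skeletal filtrations, $E \wedge \mathbb{CP}^{n}_{+} \simeq \bigvee_{i=0}^{n}\Sigma^{2i}E$ compatibly in $n$; dualizing and passing to the limit gives $E^{hS^1} = F(\mathbb{CP}^{\infty}_{+},E) \simeq \prod_{i\geq 0}\Sigma^{-2i}E$. Since $x$ acts locally nilpotently on $\pi_*E_{hS^1}$, inverting $x$ kills the homotopy orbits, so the norm cofiber sequence identifies $E^{tS^1} \simeq E^{hS^1}[x^{-1}] = \colim\bigl(E^{hS^1}\xrightarrow{x}\Sigma^2 E^{hS^1}\xrightarrow{x}\cdots\bigr)$. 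Under the product decomposition, multiplication by $x$ shifts the factors by one, and therefore the ``coefficient of $x^0$'' maps $\Sigma^{2N}E^{hS^1}\to E$ are compatible with the colimit transition maps and assemble to a retraction $\rho\colon E^{tS^1}\to E$ of the canonical map $E \to E^{hS^1} \to E^{tS^1}$. Hence $L_{T(n)}E$ is a retract of $L_{T(n)}E^{tS^1}$, and since the former is nonzero so is the latter.

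The one point requiring genuine care is the complex orientable case, specifically the bookkeeping that the ``constant-coefficient'' retractions of the various $\Sigma^{2N}E^{hS^1}$ are mutually compatible — they all amount to reading off the coefficient of $x^0$, which is exactly why the retraction descends to the colimit $E^{tS^1}$. Everything else is formal: the functoriality and lax monoidality of $(-)^{tS^1}$, the K\"unneth computation, and the applications of \Cref{rmk:ringmap}. One should also pause to confirm that $E^{tS^1}\simeq E^{hS^1}[x^{-1}]$ needs no further completion — it does not, since $\pi_*E^{tS^1}=E_*((x))$ already has $x$ invertible.
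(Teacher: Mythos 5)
Your proof is correct and takes essentially the same approach as the paper: reduce to the complex orientable case by smashing with a suitable ring, then exhibit $E$ as a retract of $E^{tS^1} \simeq E^{hS^1}[x^{-1}]$. The only cosmetic difference is that the paper smashes with $K(n)$ (which makes preservation of $T(n)$-local nontriviality immediate from \Cref{rmk:dhsnilp}, no K\"unneth argument needed), whereas you smash with $MU$ and verify nonvanishing by K\"unneth in $K(n)$-homology; both are fine.
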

\begin{proof}
Since there is a ring map $E \to E\otimes K(n)$, it suffices to show that $L_{T(n)}(E\otimes K(n))^{tS^1}$ is nonzero.  But by \Cref{rmk:dhsnilp}, we have $L_{T(n)}E \simeq 0$ iff $L_{K(n)} E \simeq 0$ iff $L_{K(n)} (E\otimes K(n)) \simeq 0$ iff $L_{T(n)} (E\otimes K(n)) \simeq 0$.  Thus, we may assume without loss of generality that $E$ is complex oriented by replacing $E$ with $E\otimes K(n)$.

In the special case that $E$ is complex oriented, we have $$\pi_* E^{hS^1} \cong E^*(BS^1) \cong  E_*[[x]],\quad |x|=-2$$ and $E^{tS^1} \simeq E^{hS^1}[x^{-1}]$; it follows that $E^{tS^1}$ has $E$ as an $E$-module summand, which completes the proof.  
\end{proof}

\begin{rmk}\label{rmk:unblueshift}
\Cref{prop:tateorbit} and \Cref{prop:tS1height} are ultimately ``where the redshift happens" in our arguments.  To illustrate this, let $E$ be a $K(n)$-local ring spectrum and let $e\to E$ denote its connective cover.  Then, by Kuhn's theorem (\Cref{thm:Kuhn}) and \Cref{lem:covertate}, we have $$L_{T(n)}e^{tC_p} \simeq L_{T(n)} E^{tC_p} \simeq 0,$$ so $e^{tC_p}$ is of height at most $n-1$.  On the other hand, by \Cref{prop:tateorbit} and \Cref{prop:tS1height}, $(e^{tC_p})^{hS^1/C_p}$ is equivalent to the $p$-completion of $e^{tS^1}$, and therefore has height $n$.  Thus, the operation $(-)^{hS^1/C_p}$ shifts the height of $e^{tC_p}$ up by one.  
\end{rmk}

\begin{rmk}
In fact, passing to the connective cover (to satisfy the bounded below hypothesis in \Cref{prop:tateorbit}) is essential.  We saw in \Cref{exm:kutcp} that $KU_p^{tC_p}$ is rational, and since rational spectra are closed under limits, it follows that $(KU_p^{tC_p})^{hS^1/C_p}$ is also rational (and in particular, not equivalent to $KU_p^{tS^1}$).  On the other hand, if one uses \emph{connective} $K$-theory $ku_p$, then $\pi_0ku_p^{tC_p} \cong \Z_p[\zeta_p]$, which is not rational.  
\end{rmk}

\section{Proof of \Cref{thm:main}}\label{sec:proofmain}

Here, we prove \Cref{thm:main} following the outline in the introduction:

\begin{thm:main}
Let $E$ be an $\E_{\infty}$-ring spectrum such that $L_{T(n)} E $ is nonzero.  Then $L_{T(n)}K(E^{tC_p})$ is nonzero.
\end{thm:main}

\begin{proof}
We begin by reducing to the connective case.  The main ingredient in the reduction is the following landmark result of Land-Mathew-Meier-Tamme \cite{LMMT} and Clausen-Mathew-Noel-Naumann \cite{CMNN}:

\begin{thm}[\cite{LMMT}, Theorem A]\label{thm:LMMT}
Let $n\geq 1$, and let $A\to B$ be a map of $\E_1$-ring spectra which is a $T(n-1)\oplus T(n)$-equivalence.  Then $K(A)\to K(B)$ is a $T(n)$-equivalence.  
\end{thm}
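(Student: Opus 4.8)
This is the purity theorem of \cite{LMMT}; my plan (close, I expect, to theirs) is to reduce it to \Cref{thm:introCMNN} by an excision argument. Bousfield localization at $T(n-1)\oplus T(n)$ is compatible with the smash product — its acyclic objects form a tensor ideal — so it sends $\E_1$-rings to $\E_1$-rings, and for a $T(n-1)\oplus T(n)$-equivalence $A\to B$ one gets $L_{T(n-1)\oplus T(n)}A\simeq L_{T(n-1)\oplus T(n)}B$ compatibly with the maps from $A$ and $B$. Hence it suffices to show that for every $\E_1$-ring $R$ the canonical map $K(R)\to K(L_{T(n-1)\oplus T(n)}R)$ is a $T(n)$-equivalence. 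Writing $C=\fib(R\to L_{T(n-1)\oplus T(n)}R)$ — a two-sided ideal in $R$ whose underlying spectrum is $T(n-1)$- and $T(n)$-acyclic — this is the statement that the relative $K$-theory $\fib\bigl(K(R)\to K(R/C)\bigr)$ is $T(n)$-acyclic.

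To analyze this relative term I would invoke the Land--Tamme excision theorem \cite{LMMT}: although $K$-theory fails excision for ideals, the failure is governed by the ``$\odot$-construction,'' and iterating it should express $\fib\bigl(K(R)\to K(R/C)\bigr)$ in terms of the $K$-theory of $\E_1$-rings built from $R$ and the iterated relative tensor powers $C,\ C\otimes_R C,\dots$, the iteration converging because these become increasingly highly connected. Since the $T(n-1)$-acyclic spectra form a tensor ideal stable under colimits, each $C^{\otimes_R k}$ is $T(n-1)$-acyclic, so \Cref{thm:introCMNN} — applied with $n$ replaced by $n-1$, in the $\E_1$-ring generality of \cite{CMNN} — shows the relevant $K$-theories are $T(n)$-acyclic; because $T(n)$-acyclics are closed under colimits and (co)fibers, and $L_{T(n)}K$ is finitary \cite{CMNN}, one concludes that $\fib\bigl(K(R)\to K(R/C)\bigr)$ is $T(n)$-acyclic.

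The step I expect to be the real obstacle is making the excision input precise: extracting from the Land--Tamme machinery enough control over $L_{T(n)}$ of the relative $K$-theory of a \emph{non-nilpotent} ideal. The naive alternative — reduce via Dundas--Goodwillie--McCarthy to a statement about relative $\mathrm{TC}$, using that $\THH$ preserves $T(j)$-equivalences — fails twice over: $C$ need not be nilpotent, so DGM does not apply directly, and, worse, the functors $(-)^{hS^1}$ and $(-)^{tS^1}$ from which $\mathrm{TC}$ is assembled do not commute with $L_{T(n)}$ (inverse limits do not), so the $T(n)$-acyclicity of $\fib(\THH(R)\to\THH(R/C))$ cannot simply be transported through the $\mathrm{TC}$ construction. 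Circumventing this — and it is precisely here that one genuinely needs $C$ to be $T(n-1)$-acyclic rather than merely $T(n)$-acyclic, so that \Cref{thm:introCMNN} applies with its index shifted down — is where the substance of \cite{LMMT} lies.
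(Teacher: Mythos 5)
This statement is cited from \cite{LMMT} (their Theorem A) and is not proved in the present paper, so there is no ``paper's own proof'' to compare against; what you have written is a reconstruction of the argument of \cite{LMMT}, and I will assess it as such.

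Your high-level outline is faithful to the actual strategy: reduce to showing $K(R)\to K(L_{T(n-1)\oplus T(n)}R)$ is a $T(n)$-equivalence, control the relative term by the Land--Tamme $\odot$-machinery, and feed in \Cref{thm:introCMNN} (in its $\E_1$/stable-$\infty$-categorical form, with the index shifted down by one) to kill the error terms. You also correctly identify the two ways the naive route dies: the fiber $C$ is not nilpotent, so Dundas--Goodwillie--McCarthy does not apply, and $L_{T(n)}$ does not pass through the limits defining $\mathrm{TC}$. These are exactly the obstructions \cite{LMMT} are engineered to avoid.

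Where I would push back is on the middle step, which you yourself flag as the hard one. Writing $C=\fib(R\to L_{T(n-1)\oplus T(n)}R)$ and then ``iterating $\odot$ on the ideal $C$ until it converges by connectivity'' does not work as stated: $C$ is a coconnective $R$-bimodule with no connectivity to gain under $\otimes_R$-powers, so there is no convergence of the sort you invoke, and in the $\E_1$ setting $C$ is not an ``ideal'' in a form that directly feeds Land--Tamme (their input is a pullback square of $\E_1$-rings, not an arbitrary fiber of a ring map). What \cite{LMMT} actually do is replace this single reduction by a chain of reductions through \emph{specific} pullback squares — passing to connective covers, to $p$-completions, and through arithmetic/chromatic fracture squares — each of which is an honest Milnor-type square to which the Land--Tamme theorem applies, and then apply \Cref{thm:introCMNN} to the auxiliary $\odot$-rings that appear. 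The chromatic hypothesis $T(n-1)\oplus T(n)$ (rather than merely $T(n)$) enters exactly where you say it does, to guarantee those auxiliary rings are $T(n-1)$-acyclic so that CMNN gives $T(n)$-acyclicity of their $K$-theory. So your proposal has the right ingredients and the right diagnosis of the difficulty, but the specific reduction ``to the ideal $C$'' is not viable and must be replaced by the fracture-square devissage; as written there is a genuine gap at that step.
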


We also note the following lemma, which also appears in \cite[Lemma 4.7]{CMNN}; we reproduce their proof for the reader's convenience: 

\begin{lem}\label{lem:covertate}
Let $X$ be a coconnective spectrum.  Then $X^{tC_p}$ is $T(n)$-acyclic for all $n\geq 0$. 
\end{lem}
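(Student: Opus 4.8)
The plan is to read everything off the defining cofiber sequence $X_{hC_p} \xrightarrow{\mathrm{tr}} X^{hC_p} \to X^{tC_p}$, treating the heights $n\geq 1$ and $n=0$ by separate but parallel arguments. Two preliminary remarks will be used throughout: since the action is trivial, $X_{hC_p}\simeq X\otimes \Sigma^{\infty}_+BC_p$ (the colimit over $BC_p$ of the constant diagram at $X$), and $X^{hC_p}=\lim_{BC_p}X$ is again coconnective, because the coconnective part of the $t$-structure on $\mathrm{Sp}$ is closed under limits.

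For $n\geq 1$, I would first invoke the standard fact that any coconnective (indeed bounded-above) spectrum $Y$ is $T(n)$-acyclic: writing $T(n)$ as the mapping telescope of a $v_n$-self-map $v\colon \Sigma^{d}F\to F$ of positive degree $d$ on a finite type-$n$ complex $F$, one has $T(n)\otimes Y = \colim_k \Sigma^{-kd}(F\otimes Y)$, and since $F\otimes Y$ is bounded above and $d>0$ this filtered colimit is eventually zero in each fixed degree, hence vanishes. Applying this to $Y=X$, and using that the class of $T(n)$-acyclic spectra is a tensor ideal, gives $T(n)\otimes X_{hC_p} = (T(n)\otimes X)\otimes \Sigma^{\infty}_+BC_p = 0$; applying it to the coconnective spectrum $Y=X^{hC_p}$ gives $T(n)\otimes X^{hC_p}=0$. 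Therefore $T(n)\otimes X^{tC_p}$, being the cofiber of a map between $T(n)$-acyclic spectra, vanishes.

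For $n=0$, where $L_{T(0)}=(-)\otimes H\Q$: since rationalization is exact and $(-)_{hC_p}$ is a colimit, $X_{hC_p}\otimes H\Q\simeq (X_{\Q})_{hC_p}$ naturally. For the fixed points I would exploit that $X$ is bounded above: in the presentation $X^{hC_p}=\lim_m F(\Sigma^{\infty}_+(BC_p)^{(m)},X)$ arising from the skeletal filtration of $BC_p$, the successive fibers are $\prod \Sigma^{-m-1}X$, which become arbitrarily highly coconnective, so the tower stabilizes in each homotopy degree; as each stage is a function spectrum out of a finite complex, rationalization commutes both with the limit and with each $F(\Sigma^{\infty}_+(BC_p)^{(m)},-)$, yielding $X^{hC_p}\otimes H\Q\simeq (X_{\Q})^{hC_p}$ naturally. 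By naturality of the transfer in the map $X\to X_{\Q}$ of $C_p$-spectra, it follows that $X^{tC_p}\otimes H\Q \simeq \mathrm{cofib}\big((X_{\Q})_{hC_p}\xrightarrow{\mathrm{tr}}(X_{\Q})^{hC_p}\big) = (X_{\Q})^{tC_p}$, and this is zero because $X_{\Q}$ is $H\Q$-linear, so $p=|C_p|$ acts invertibly and the norm is an equivalence (equivalently $(H\Q)^{tC_p}=0$). Combined with the previous paragraph, $X^{tC_p}$ is $T(n)$-acyclic for all $n\geq 0$.

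The main obstacle is entirely one of convergence: $(-)^{tC_p}$ does not commute with the relevant limits and colimits, and $X^{tC_p}$ is typically \emph{not} bounded above (its Tate spectral sequence is $2$-periodic), so the slogan ``coconnective $\Rightarrow$ chromatically trivial'' cannot be applied to $X^{tC_p}$ itself — one must first resolve it through $X_{hC_p}$ and $X^{hC_p}$, where coconnectivity is genuinely usable. The subtle step is the height-$0$ case, since rationally-acyclic spectra are not closed under inverse limits; this is precisely why the skeletal-filtration argument, and hence the bounded-above hypothesis on $X$, is needed in order to identify $X^{hC_p}\otimes H\Q$ with $(X_{\Q})^{hC_p}$.
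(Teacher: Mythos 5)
Your proof is correct, but it follows a genuinely different route from the paper's. The paper's proof is a two-line reduction: since $(-)^{tC_p}$ commutes with filtered colimits on coconnective spectra, one may assume $X$ is concentrated in a single degree, i.e.\ an Eilenberg--MacLane spectrum $\Sigma^{k}HA$; then $\pi_*X^{tC_p}$ is the Tate cohomology $\widehat{H}^*(C_p;A)$, which is annihilated by $p$, and a $p$-torsion $\Z$-module is $T(n)$-acyclic for every $n\geq 0$ simultaneously. You instead resolve $X^{tC_p}$ by the norm cofiber sequence and show the two outer terms are separately $T(n)$-acyclic: for $n\geq 1$ this is quick (bounded-above spectra are $T(n)$-acyclic, so $X^{hC_p}$ is dealt with by coconnectivity, and $X_{hC_p}\simeq X\otimes\Sigma^\infty_+BC_p$ by the tensor-ideal property, since $X_{hC_p}$ itself need not be coconnective); but for $n=0$ you need the separate skeletal-filtration argument to commute $(-)\otimes H\Q$ past $(-)^{hC_p}$. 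What your approach buys is transparency at positive height and an honest explanation of why the lemma isn't trivial --- as you rightly emphasize, $X^{tC_p}$ is not itself bounded above, so the coconnectivity slogan cannot be applied to it directly, and one must pass through $X_{hC_p}$ and $X^{hC_p}$ where coconnectivity is usable. What the paper's approach buys is uniformity: all $n\geq 0$ are handled at once, and the $n=0$ case requires no extra $\lim$/$\lim^1$ bookkeeping. One small slip: in the standard CW model of $BC_p$ there is exactly one cell in each dimension, so the successive fiber in your tower is $\Sigma^{-m-1}X$ rather than a product $\prod\Sigma^{-m-1}X$; this does not affect the argument, since all that matters is that the fibers become arbitrarily coconnective.
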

\begin{proof}
Because $(-)^{tC_p}$ commutes with filtered colimits on coconnective spectra, one can reduce to the case when $X$ is concentrated in a single degree.  In this case, $X^{tC_p}$ is a $p$-torsion $\mathbb{Z}$-module, and thus $T(n)$-acyclic for any $n\geq 0$.  
\end{proof}

Combining this lemma with \Cref{thm:LMMT}, we obtain:

\begin{cor}\label{cor:conncover}
Let $A$ be an $\E_1$-ring spectrum and $a\to A$ be its connective cover.  Then for any $n\geq 1$, the natural map $K(a^{tC_p}) \to K(A^{tC_p})$ is a $T(n)$-equivalence.  
\end{cor}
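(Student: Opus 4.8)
The plan is to compare $a^{tC_p}$ with $A^{tC_p}$ via the Whitehead tower of $A$ and then invoke \Cref{thm:LMMT}. First I would form the fiber sequence $a \to A \to \tau_{\leq -1}A$ realizing the connective cover, whose cofiber $\tau_{\leq -1}A$ is coconnective. Since $(-)^{tC_p}$ (with respect to the trivial $C_p$-action) is an exact functor of spectra — being the cofiber of the natural transformation $(-)_{hC_p}\to(-)^{hC_p}$ — applying it produces a cofiber sequence of spectra
\[
a^{tC_p} \longrightarrow A^{tC_p} \longrightarrow (\tau_{\leq -1}A)^{tC_p}.
\]
Moreover, because $(-)^{tC_p}$ is lax symmetric monoidal, the map $a^{tC_p}\to A^{tC_p}$ refines to a map of $\E_1$-rings (indeed of $\E_\infty$-rings when $A$ is), since $a\to A$ is a ring map.

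Next I would show this ring map is a $T(m)$-equivalence for every $m\geq 0$. Indeed, $\tau_{\leq -1}A$ is coconnective, so by \Cref{lem:covertate} the cofiber $(\tau_{\leq -1}A)^{tC_p}$ is $T(m)$-acyclic for all $m\geq 0$; applying $L_{T(m)}$ to the cofiber sequence above then shows $a^{tC_p}\to A^{tC_p}$ becomes an equivalence after $T(m)$-localization for every $m\geq 0$. In particular, it is a $T(n-1)\oplus T(n)$-equivalence.

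Finally, I would apply \Cref{thm:LMMT} to the $\E_1$-ring map $a^{tC_p}\to A^{tC_p}$: as $n\geq 1$ and the map is a $T(n-1)\oplus T(n)$-equivalence, the induced map $K(a^{tC_p})\to K(A^{tC_p})$ is a $T(n)$-equivalence, which is the assertion. Every step except the appeal to \Cref{thm:LMMT} is formal, so there is no genuine obstacle here; the only point meriting a word of care is that the comparison map be honestly a map of ring spectra (so that \Cref{thm:LMMT}, a statement about ring maps, applies), and this is precisely what the lax monoidality of $(-)^{tC_p}$ supplies.
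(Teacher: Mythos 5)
Your proof is correct and is essentially the same argument the paper intends when it says ``Combining this lemma with \Cref{thm:LMMT}, we obtain:'' before stating the corollary. The paper leaves the details implicit, but the route is exactly yours: use exactness of $(-)^{tC_p}$ to identify the cofiber of $a^{tC_p}\to A^{tC_p}$ with $(\tau_{\leq -1}A)^{tC_p}$, invoke \Cref{lem:covertate} to see that this cofiber is $T(m)$-acyclic for every $m\geq 0$, conclude the ring map is a $T(n-1)\oplus T(n)$-equivalence, and finish with \Cref{thm:LMMT}. Your explicit remark that lax monoidality of $(-)^{tC_p}$ makes the comparison a map of $\E_1$-rings is a point worth including, since \Cref{thm:LMMT} does require a ring map.
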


Hence, we are free to replace $E$ by its connective cover $e = \tau_{\geq 0}E$.  To finish the proof, we will use $\THH$ and the Dennis trace map, so we first recall the following fact about $\THH$:

\begin{rec}\label{THHuprop}
Let $A$ be an $\E_{\infty}$-ring spectrum and let $B$ be an $\E_{\infty}$-ring spectrum with an action of $S^1$.  Then $\THH(A)$ is given by the colimit over $S^1$ of the constant diagram at $A$ in $\E_{\infty}$-rings.  Hence, restriction along $A\to \THH(A)$ in the diagram
\[
\begin{tikzcd}
A \arrow[d] \arrow[rd] & \\
 \THH(A) \arrow[r, dashed]& B
\end{tikzcd}
\]
induces a bijection between homotopy classes of $S^1$-equivariant $\E_{\infty}$-ring maps $\THH(A) \to B$ and homotopy classes of (non-equivariant) $\E_{\infty}$-ring maps $A\to B$.
\end{rec}

We apply this in the case when $A = B = e^{tC_p}$, and where $B$ is given a \emph{nontrivial} $S^1$-action as follows: we saw in the previous section that $e^{tC_p}$ acquires an (interesting) residual $S^1/C_p$ action, and we give $B = e^{tC_p}$ an $S^1$-action by choosing an isomorphism $S^1 \cong S^1/C_p$.  Then, by the above universal property of $\THH$, the identity map $e^{tC_p} \to e^{tC_p}$ extends canonically to an $S^1$-equivariant $\E_{\infty}$ map $\THH(e^{tC_p}) \to e^{tC_p}$ (for this nontrivial $S^1$ action on the target).  Taking fixed points, this determines an $\E_{\infty}$-ring map $\THH(e^{tC_p})^{hS^1} \to (e^{tC_p})^{hS^1/C_p}$.  Composing this with the (circle invariant) Dennis trace yields a map of $\E_{\infty}$-rings 
\begin{equation*}\label{eqn:comp}
K(e^{tC_p}) \to \THH(e^{tC_p})^{hS^1} \to  (e^{tC_p})^{hS^1/C_p}.
\end{equation*}

Since $e$ is connective, the Tate orbit lemma (\Cref{prop:tateorbit}) identifies the target ring with the $p$-completion of $e^{tS^1}$, which has nontrivial $T(n)$-localization by \Cref{prop:tS1height}.  Hence, $K(e^{tC_p})$ is $T(n)$-locally nontrivial because it admits a ring map to a $T(n)$-locally nontrivial ring.   
\end{proof}

\begin{rmk}
Asaf Horev has pointed out that $A$ is an $S^1$-equivariant module over $\THH(A)$ for any \emph{framed $\E_2$-algebra} $A$, not just for $\E_{\infty}$-rings.  Given this, one can wonder if \Cref{thm:main} holds in some greater generality.  
\end{rmk}

\section{Lubin-Tate theories}\label{sec:proofEredshift}

Let $E_n$ denote the Lubin-Tate theory associated to a formal group $\mathbb{G}_0$ of height $n$ over a perfect field $k$ of characteristic $p$.  The goal of this section will be to show that $K(E_n)$ is an $\E_{\infty}$-ring spectrum of height $n+1$; that is, that $L_{T(n+1)}K(E_n)$ is nonzero.

One way to prove this would be to exhibit an $\E_{\infty}$-ring $A$ such that $L_{T(n+1)}K(A) \not\simeq 0$ together with an $\E_{\infty}$-ring map $E_n \to A$.  
In fact, by work of Goerss-Hopkins-Miller \cite{GH} and Lurie \cite{Ell2}, Lubin-Tate theories are characterized by a universal property which makes them amenable to mapping out of.  In order to state this universal property, we first recall a definition:

\begin{rec}\label{rec:landweberideal}
Let $R$ be a complex orientable ring spectrum.  Then for each $n\geq 1$, there is a canonical \emph{$n$th Landweber ideal} $\mathcal{I}_n \subset \pi_*(R)$, generated by (any choice of) $p, v_1, \cdots , v_{n-1}$.  In the following, we will be working only with $2$-periodic complex orientable rings, and we will denote the corresponding ideal of $\pi_0(R)$ by $I_n\subset \pi_0(R)$.  
\end{rec}

\begin{thm}[Goerss-Hopkins-Miller \cite{GH}, Lurie \cite{Ell2} Theorem 5.0.2]\label{thm:Eunivprop}
Let $E_n$ denote the Lubin-Tate theory associated to a height $n$ formal group $\mathbb{G}_0$ over a perfect field $k$ of characteristic $p$, let $R$ be a $2$-periodic complex orientable $K(n)$-local $\E_{\infty}$-ring spectrum, and let $\mathbb{G}_R = \Spf R^0(\mathbb{C}P^{\infty})$ denote the canonical Quillen formal group over $\pi_0(R)$.  

Then there is a canonical homotopy equivalence between:
\begin{enumerate}
\item The space $\Hom_{\CAlg}(E_n, R)$ of $\E_{\infty}$-ring maps from $E_n$ to $R$.
\item The set of pairs $(f,\alpha)$, where $f: k \to \pi_0(R)/I_n$ is a ring homomorphism, and $\alpha : f^*\mathbb{G}_0 \cong (\mathbb{G}_R)_{\pi_0(R)/I_n}$ is an isomorphism of formal groups over $\pi_0(R)/I_n$.  
\end{enumerate}
\end{thm}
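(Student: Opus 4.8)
This is the Goerss--Hopkins--Miller theorem, and I would organize a proof as a comparison of two deformation-theoretic functors: the "spectral" one $R\mapsto\Hom_{\CAlg}(E_n,R)$ and the "classical" one $R\mapsto\{(f,\alpha)\}$. Since $R$ is assumed $K(n)$-local and $E_n$ is $K(n)$-local, the space $\Hom_{\CAlg}(E_n,R)$ is unchanged by $K(n)$-localizing the target, so there is no loss in working entirely inside $K(n)$-local $\E_{\infty}$-rings; the $2$-periodic complex-orientable hypothesis lets us speak of the Quillen formal group $\mathbb{G}_R$ and ensures $\pi_*R=\pi_0R[\beta^{\pm1}]$ is evenly graded. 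The claim then splits into two parts: (a) identifying $\pi_0\Hom_{\CAlg}(E_n,R)$ with the set of pairs $(f,\alpha)$, and (b) showing $\Hom_{\CAlg}(E_n,R)$ is homotopy discrete. There is an evident candidate for the comparison map: an $\E_{\infty}$-ring map $E_n\to R$ induces a continuous ring map $\pi_0E_n\to\pi_0R$, and I will argue this lands in — and biject onto — the set of pairs.

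First I would dispose of the $\pi_0$-side by classical means. Using $K(n)$-locality one checks that $\pi_0R$ is complete with respect to $I_n$, so the formal group $\mathbb{G}_R$ reduces modulo $I_n$ to a formal group over the $\mathbb{F}_p$-algebra $\pi_0R/I_n$, and a pair $(f,\alpha)$ is exactly a presentation of that reduction as a pullback of $\mathbb{G}_0$. On the other side, $\pi_0E_n$ is by construction the Lubin--Tate ring $W(k)[[u_1,\dots,u_{n-1}]]$ carrying the universal deformation $\mathbb{G}^{\mathrm{univ}}$ of $\mathbb{G}_0$, and Lubin--Tate's theorem — in the form valid over complete adic rings, cf.\ \cite{Ell2} — says that giving a continuous ring homomorphism $\pi_0E_n\to\pi_0R$ is the same as giving a deformation of $\mathbb{G}_0$ over $(\pi_0R,I_n)$, i.e.\ exactly the data $(f,\alpha)$; the rigidity half of that theorem (deformations of $\mathbb{G}_0$ admit no nontrivial automorphisms) is what removes all ambiguity in $\alpha$. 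So the comparison map is a bijection onto the pair set at the level of continuous ring maps on $\pi_0$; what remains is to see that every continuous ring map on $\pi_0$ lifts uniquely (up to a contractible space of choices) to an $\E_{\infty}$-ring map, and that the mapping space carries no higher homotopy.

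Both remaining points are governed by obstruction theory. By the Goerss--Hopkins machinery \cite{GH} — equivalently, Lurie's deformation theory of $\E_{\infty}$-rings \cite{Ell2} — the obstructions to lifting a continuous ring map $\pi_0E_n\to\pi_0R$ to an $\E_{\infty}$-ring map $E_n\to R$, the indeterminacy among such lifts, and the higher homotopy groups of $\Hom_{\CAlg}(E_n,R)$ at any basepoint are all controlled by Andr\'e--Quillen cohomology groups of $E_n$ with coefficients in the homotopy of $R$, equivalently by homotopy of mapping spectra out of the cotangent complex $L_{E_n}$ (relative to $\mathbb{S}$, or to $\mathbb{S}_{K(n)}$). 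Thus the whole theorem reduces to a vanishing statement for these groups in the relevant (evenly graded) range.

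The hard part is precisely this vanishing, which is the substance of the Hopkins--Miller/Goerss--Hopkins theorem: one must show that the spectral deformation theory of $E_n$ collapses onto the classical deformation theory of $\mathbb{G}_0$. Concretely, I would identify, after base change to $E_n$, the cotangent complex $L_{E_n/\mathbb{S}_{K(n)}}$ with a shift of the cotangent complex of the \emph{classical} pair $(\pi_0E_n,\mathbb{G}^{\mathrm{univ}})$ relative to $(k,\mathbb{G}_0)$, and then observe that the latter vanishes because $W(k)$ is formally \'etale over $\mathbb{Z}_p$, $W(k)[[u_1,\dots,u_{n-1}]]$ is formally smooth over $W(k)$, and — the essential input — Lubin--Tate's theorem asserts exactly that the deformation functor of $\mathbb{G}_0$ is unobstructed and rigid. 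In the Goerss--Hopkins picture the same fact reappears as the statement that the Hopf algebroid $((E_n)_*,(E_n)_*(E_n))$, computed by Landweber exactness as the functions on the groupoid of deformations of $\mathbb{G}_0$ and their isomorphisms, has trivial cotangent complex, so that the obstruction spectral sequence degenerates and both existence and uniqueness of $\E_{\infty}$-maps out of $E_n$ hold. Granting this, the obstruction theory of the previous paragraph shows $\Hom_{\CAlg}(E_n,R)$ is discrete with $\pi_0$ the set of continuous ring maps $\pi_0E_n\to\pi_0R$, which by the classical step is the set of pairs $(f,\alpha)$ — the asserted equivalence. A secondary, more routine obstacle is the bookkeeping needed to pass from the Noetherian-complete-local form of Lubin--Tate's theorem to the form required for a possibly non-Noetherian $I_n$-complete $\pi_0R$, and to check that the comparison map is natural in $R$ and independent of the complex orientation used to define $\mathbb{G}_R$.
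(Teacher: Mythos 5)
The paper does not prove this theorem; it is stated as a black box with citations to Goerss--Hopkins--Miller \cite{GH} and to Lurie's Elliptic Cohomology II \cite[Theorem 5.0.2]{Ell2}, and the paper simply invokes it in the proof of its Theorem~B. So there is no proof in the paper to compare your write-up against.

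As a sketch of the external argument, your outline has the right architecture: (a) $K(n)$-locality reduces to the $K(n)$-local category; (b) decompose into a $\pi_0$-statement and a discreteness statement; (c) handle $\pi_0$ by Lubin--Tate rigidity; (d) handle discreteness by cotangent-complex/Andr\'e--Quillen obstruction theory, with the essential input being that the spectral deformation theory of $E_n$ collapses onto the unobstructed classical one. That is indeed the shape of both the Goerss--Hopkins and the Lurie proofs. Two cautions, though. First, ``$2$-periodic complex orientable'' does not by itself force $\pi_*R$ to be concentrated in even degrees, so you cannot assume $\pi_*R=\pi_0R[\beta^{\pm 1}]$ is evenly graded; the argument must be arranged so that only $\pi_0$ and the Quillen formal group matter. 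Second, you underplay the completeness issue as ``routine bookkeeping'': for a general $K(n)$-local $\E_\infty$-ring, $\pi_0R$ is only derived ($L$-)complete along $I_n$ rather than classically $I_n$-adically complete and Noetherian, and extending Lubin--Tate's deformation-theoretic classification to this generality is a substantial part of what \cite{Ell2} actually does (via its theory of formal groups over $\E_\infty$-rings and spectral deformations), not a minor adaptation of the Noetherian-local statement. With those two points flagged, your sketch is a fair high-level account of the result the paper cites.
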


Given a $K(n)$-local $\E_{\infty}$-ring $A$, this theorem provides an algebraic method for producing $\E_{\infty}$-maps from $E_n$ to $A$.  The remaining question is how to choose the $\E_{\infty}$-ring $A$.  By \Cref{thm:main}, an example of an $\E_{\infty}$-ring $A$ which exhibits redshift is $E_{n+1}^{tC_p}$, where $E_{n+1}$ is a Lubin-Tate theory of corresponding to a height $n+1$ formal group over $k$.  Unfortunately, while the formal group of $E_{n+1}^{tC_p}$ does have height $n$ at certain residue fields, these fields are generally finite extensions of Laurent series rings over $k$; in general, the corresponding formal groups are complicated, and not isomorphic to formal groups defined over $k$ (cf. \cite{AMS} for a more detailed discussion).

However, one does not quite need to produce such a map; roughly speaking, we will see that the \'{e}tale descent results of Clausen-Mathew-Naumann-Noel \cite{CMNNausrog} imply that one only needs to produce this map after a sequence of \'{e}tale extensions; this allows one to reduce to the case of a separably closed residue field, where all formal groups of a given height \emph{are} isomorphic.  The key ingredient powering this reduction is the following theorem, which asserts that the functor $L_{T(n)} K(-)$ satisfies \'{e}tale descent:

\begin{thm}[Theorem A.4 \cite{CMNNausrog}]\label{thm:etaledesc}
Let $R$ be an $\E_{\infty}$-ring spectrum and $n\geq 0$.  Then the construction $Y\mapsto L_{T(n)}K(\Perf (Y))$ defines an \'{e}tale sheaf on $R$.  
\end{thm}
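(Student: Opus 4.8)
The plan is to verify the étale sheaf condition by decomposing the étale topology into its Nisnevich part and its finite-étale part, and to treat each separately; the finite-étale (Galois) part is the only place where telescopic localization does essential work. Write $F(Y) := L_{T(n)}K(\Perf(Y))$ for $Y$ in the small étale site of $R$. The first step is a purely topological reduction: the small étale site of $R$ is generated, as a Grothendieck topology, by the Nisnevich topology together with the finite étale surjective morphisms — any étale cover admits a refinement built by alternately passing to Nisnevich covers and to finite étale covers — and a finitary presheaf of spectra is a sheaf as soon as it satisfies descent for the \v{C}ech nerves of a generating family of covers. Hence it suffices to show that (i) $F$ is a Nisnevich sheaf, and (ii) $F$ satisfies descent along finite étale surjections; and (ii) reduces, since every finite étale surjection is refined by a Galois one and descent for a cofinal family of covers implies descent for all, to the assertion that for every finite $G$-Galois extension $R \to R'$ (in the sense of Rognes) the canonical map $F(R) \to F(R')^{hG}$ is an equivalence.

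For (i), I would observe that $F$ is a finitary localizing invariant precomposed with $Y \mapsto \Perf(Y)$, since $K$ is finitary and localizing and $L_{T(n)}$ is exact and commutes with filtered colimits. Every localizing invariant carries a Verdier sequence to a fibre sequence, so it carries the Mayer--Vietoris square of perfect module categories attached to an affine Nisnevich (Milnor) square to one producing a fibre sequence of spectra — and $L_{T(n)}$, being exact, preserves this — while $F$ of the zero ring vanishes. As the Nisnevich topology on a finitary site is generated by the empty sieve together with these elementary distinguished squares, $F$ is a Nisnevich sheaf; the only point requiring (standard) care is that one works with $\E_\infty$-rings rather than Noetherian schemes, but Nisnevich descent for finitary localizing invariants is known in this generality, bootstrapping from the automatic Zariski descent together with the excision property of $K$-theory.

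Step (ii), finite Galois descent, is the crux and the expected main obstacle. When $n = 0$ it is elementary and I would dispatch it directly: a transfer argument exhibits $K(R)_{\Q}$ as a retract of $K(R')_{\Q}$, identified with the $G$-invariants, and since $H^{>0}(G;-)$ vanishes on $\Q$-modules the descent map is an equivalence. For $n \geq 1$ transfers are useless (the order of $G$ need not be invertible), and I would instead reduce — using the Nisnevich descent of Step (i), which permits passing to Henselian local $R$, then to the residue field via nil-invariance and a rigidity argument — to the case of a finite Galois extension of fields, and invoke there a rigidity/Thomason-type descent theorem for $L_{T(n)}K$ (an analogue of Gabber--Suslin rigidity, asserting insensitivity of $L_{T(n)}K$ to finite étale extensions of strictly Henselian rings, over which such extensions split). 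Establishing this last input is the real content: it rests on the Devinatz--Hopkins--Smith nilpotence technology and on control of $L_{T(n)}$ applied to the algebraic $K$-theory of fields — essentially the chromatic refinement of the Quillen--Lichtenbaum picture — and that is the step I expect to be genuinely hard; it is the heart of \cite{CMNNausrog}.
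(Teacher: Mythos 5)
The paper does not prove this statement; it cites it as Theorem~A.4 of \cite{CMNNausrog} and uses it as a black box (and separately invokes the underlying Galois descent ingredient, \cite[Corollary 4.13]{CMNN}, later in \Cref{thm:pgroupdescent}). So the comparison here is between your sketch and the argument in the cited reference, not an argument in this paper.

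Your structural reduction is sound up to a point: splitting into Nisnevich descent plus finite Galois descent, handling $n=0$ by a transfer/retraction argument, and identifying the crux as the claim that $L_{T(n)}K(R) \to L_{T(n)}K(R')^{hG}$ is an equivalence for a finite $G$-Galois extension $R\to R'$ --- all of that matches the shape of the real argument, and the Nisnevich/localizing-invariant part is reasonable (modulo the genuine care needed to set up Nisnevich squares for $\E_\infty$-rings). The gap is in how you propose to establish the crux for $n\geq 1$. You suggest passing to Henselian local rings and then to residue fields, and invoking a ``rigidity/Thomason-type'' theorem asserting insensitivity of $L_{T(n)}K$ to finite \'etale extensions of strictly Henselian rings. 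That is not the route taken in \cite{CMNNausrog,CMNN}, and it is not a route you can complete: Gabber--Suslin-type rigidity for $L_{T(n)}K$ with $n\geq 2$ is not available as an input (it is at least as hard as, and in fact closely tied to, the descent statement you are trying to prove), and the reduction to fields does not actually simplify the Galois descent problem, since the finite \'etale extensions of a Henselian local ring are governed by those of its residue field, where you would still face the same question. What the CMNN argument actually does is prove Galois descent \emph{directly}, for an arbitrary $\E_\infty$-ring (indeed for a stable $\infty$-category with $G$-action), with no reduction to fields: the input is the derived induction/restriction and nilpotence technology of Mathew--Naumann--Noel --- roughly, that $T(n)$-local Borel $G$-equivariant spectra are nilpotent with respect to the family generated by abelian (or even cyclic) $p$-subgroups, which is a consequence of the thick-subcategory/nilpotence machinery. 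That nilpotence statement is what upgrades ``descent up to nilpotent error'' (which $K$-theory of Galois extensions satisfies essentially formally) to honest descent after $T(n)$-localization. You have correctly located where the difficulty lives and are honest that you have not discharged it, but the mechanism you propose for it --- rigidity and a chromatic Quillen--Lichtenbaum picture --- is a different, and not viable, mechanism from the one that actually proves the theorem.
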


The author would like to thank Akhil Mathew for explaining how to apply this theorem, and especially for explaining \Cref{lem:stalk} (of course, any errors in the argument produced here are solely the responsibility of the author).  

\subsection{Proof of redshift for $E_n$}

We now give the details of the argument.  

\begin{cnstr}\label{cnstr:stricthens}
Let $R$ be an $\E_{\infty}$-ring spectrum and $\p \subset \pi_0(R)$ be a prime ideal.   Then one can produce a ring $\pi_0(R)^{\sh}_{\p}$, known as the \emph{strict henselization} of $\pi_0(R)$ at $\p$, with the following properties (see, for instance, \cite[\href{https://stacks.math.columbia.edu/tag/0BSK}{Tag 0BSK}]{stacks-project}):
\begin{enumerate}
\item $\pi_0(R)^{\sh}_{\p}$ is strictly henselian; in particular, it is a local ring with separably closed residue field.  
\item $\pi_0(R)^{\sh}_{\p}$ is a filtered colimit of \'{e}tale $\pi_0(R)$-algebras.
\item $\p \pi_0(R)^{\sh}_{\p}$ is the maximal ideal of $\pi_0(R)^{\sh}_{\p}.$  
\end{enumerate}

By \cite[Theorem 7.5.0.6]{HA}, this construction can be lifted (essentially uniquely) to produce an $\E_{\infty}$-ring $R^{\sh}_{\p}$ which is a filtered colimit of \'{e}tale extensions of $R$ and such that $\pi_0$ of the map $R \to R^{\sh}_{\p}$ is identified with $\pi_0(R) \to \pi_0(R)^{\sh}_{\p}$.
\end{cnstr}

\begin{lem}\label{lem:stalk}
Let $R$ be an $\E_{\infty}$-ring spectrum, and suppose that $L_{T(n)} K(R) \not\simeq 0$ for some integer $n\geq 0$.  Then there exists a prime ideal $\p \subset \pi_0(R)$ such that $L_{T(n)}K(R^{\sh}_{\p}) \not\simeq 0.$  
\end{lem}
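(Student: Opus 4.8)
The plan is to run the hypothesis through the \'{e}tale descent statement of \Cref{thm:etaledesc}. Write $\mathcal{F}$ for the functor $Y \mapsto L_{T(n)}K(\Perf(Y))$ on the small \'{e}tale site of $R$; by \Cref{thm:etaledesc} this is a sheaf of spectra, and since \'{e}tale morphisms of $\E_{\infty}$-rings are detected on $\pi_0$, it is equivalently a sheaf of spectra on the small \'{e}tale site of the ordinary ring $\pi_0(R)$. Its value on the unit is $\mathcal{F}(R) = L_{T(n)}K(\Perf(R)) = L_{T(n)}K(R)$, which is nonzero by assumption. I would argue by contraposition: assuming $L_{T(n)}K(R^{\sh}_{\p}) \simeq 0$ for \emph{every} prime $\p \subset \pi_0(R)$, I will show that every stalk of $\mathcal{F}$ vanishes, hence $\mathcal{F} \simeq 0$, contradicting the nonvanishing of its global sections.

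The first step is to identify the stalk of $\mathcal{F}$ at a prime $\p$ with $L_{T(n)}K(R^{\sh}_{\p})$. Fix a geometric point $\bar{\p}$ lying over $\p$, so that its strictly henselian local ring realizes $\pi_0(R)^{\sh}_{\p}$, and hence $R^{\sh}_{\p}$, as a filtered colimit of \'{e}tale $R$-algebras (\Cref{cnstr:stricthens}); this filtered system is cofinal in the \'{e}tale neighborhoods of $\bar{\p}$. Now $K$-theory commutes with filtered colimits of rings (equivalently, $\Perf(-)$ does and $K$ is finitary), and $L_{T(n)}$ is a smashing localization, so $L_{T(n)}K(-)$ commutes with filtered colimits. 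Therefore
\[
\mathcal{F}_{\bar{\p}} \;\simeq\; \colim_{R'} L_{T(n)}K(R') \;\simeq\; L_{T(n)}K\bigl(R^{\sh}_{\p}\bigr),
\]
where $R'$ ranges over \'{e}tale $R$-algebras equipped with a map to $R^{\sh}_{\p}$. The second step is to invoke that the $\infty$-topos of hypercomplete \'{e}tale sheaves of spectra on $\Spec\pi_0(R)$ has enough points --- the stalk functors at the geometric points, equivalently at the strict henselizations $R^{\sh}_{\p}$ with $\p$ ranging over primes, are jointly conservative on it. Granting that $\mathcal{F}$ is hypercomplete, the supposition $L_{T(n)}K(R^{\sh}_{\p}) \simeq 0$ for all $\p$ now says exactly that $\mathcal{F}_{\bar{\p}} \simeq 0$ for all $\p$, whence $\mathcal{F} \simeq 0$ and in particular $L_{T(n)}K(R) = \mathcal{F}(R) \simeq 0$, the desired contradiction.

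The genuinely routine ingredients are that $L_{T(n)}K(-)$ is finitary and that strict henselizations are filtered colimits of \'{e}tale extensions. I expect the main point requiring justification to be the input ``vanishing of all stalks implies $\mathcal{F} \simeq 0$'': for a sheaf of spectra this is not formal from the bare sheaf condition of \Cref{thm:etaledesc}, but needs the strengthening to \'{e}tale \emph{hyperdescent} (equivalently, hypercompleteness of $\mathcal{F}$), which should be extracted from \cite{CMNNausrog} --- this is precisely the phenomenon underlying Thomason-style descent theorems. A lesser point to handle carefully is the bookkeeping identifying the indexing category of \'{e}tale $R$-algebras mapping to $R^{\sh}_{\p}$ with the \'{e}tale neighborhoods of $\bar{\p}$, so that the displayed colimit really computes the stalk.
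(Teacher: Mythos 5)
Your overall strategy—argue by contraposition, use that strict henselizations are filtered colimits of \'{e}tale extensions and that $K$-theory is finitary, then feed the resulting local vanishing into the sheaf condition of \Cref{thm:etaledesc}—is the same as the paper's. But there are two places where you have real gaps, and in both the paper does something different.

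First, your identification of the stalk $\mathcal{F}_{\bar\p}$ with $L_{T(n)}K(R^{\sh}_{\p})$ rests on the claim that $L_{T(n)}$ is smashing and so commutes with filtered colimits. This is not a known fact (and is closely tied to the telescope conjecture); you cannot move $L_{T(n)}$ inside the colimit $\colim_i K(R^{(i)}_{\p})$. The paper sidesteps this with a small but essential trick: choose $T(n)$ to be an $\E_1$-ring. Then $L_{T(n)}K(R^{\sh}_{\p}) \simeq 0$ is equivalent to $K(R^{\sh}_{\p}) \otimes T(n) \simeq 0$, and since $(-)\otimes T(n)$ does commute with filtered colimits, this is $\colim_i \bigl(K(R^{(i)}_{\p}) \otimes T(n)\bigr) \simeq 0$. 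That is now a filtered colimit of $\E_1$-\emph{rings}, and a filtered colimit of rings is zero if and only if $1 = 0$ at some finite stage. Hence $L_{T(n)}K(R^{(i)}_{\p}) \simeq 0$ for some \'{e}tale neighborhood $R^{(i)}_{\p}$ of $\p$; this finite-stage vanishing is the content you need and cannot get from the stalk identification you wrote.

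Second, you reach for hyperdescent to pass from vanishing on stalks to global vanishing, and you flag this as the main point needing justification—correctly, because \Cref{thm:etaledesc} as stated only gives the ordinary sheaf condition. But no hyperdescent is needed here. The step you are missing is that $L_{T(n)}K(-)$ is lax symmetric monoidal, so for any \'{e}tale $R^{(i)}_{\p}$-algebra $R'$ the spectrum $L_{T(n)}K(R')$ is an algebra over $L_{T(n)}K(R^{(i)}_{\p}) \simeq 0$, hence itself zero. With this in hand: by quasi-compactness of $\Spec\pi_0(R)$, finitely many of the $R^{(i)}_{\p}$ give an \'{e}tale cover $R \to S = \prod_j R^{(i_j)}_{\p_j}$, and every term $S^{\otimes_R m}$ of the \v{C}ech nerve is \'{e}tale over some $R^{(i_j)}_{\p_j}$, so $L_{T(n)}K(S^{\otimes_R m}) \simeq 0$ for all $m$. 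The ordinary \'{e}tale descent of \Cref{thm:etaledesc} then gives $L_{T(n)}K(R) \simeq \lim_{\Delta} L_{T(n)}K(S^{\otimes_R \bullet + 1}) \simeq 0$, the desired contradiction. So the paper's route is both more elementary and avoids the hypercompleteness question entirely; you should replace both the smashing claim and the hyperdescent appeal with these two observations.
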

\begin{proof}
Suppose the contrary, that $L_{T(n)}K(R^{\sh}_{\p})$ vanishes for every prime $\p \subset \pi_0(R).$  Since the strict henselizations of $R$ are the local rings in the \'{e}tale topology, we may write $R^{\sh}_{\p}$ as a filtered colimit $$R^{\sh}_{\p} = \colim_{i\in I} R^{(i)}_{\p},$$ where each $R^{(i)}_{\p}$ is an \'{e}tale neighborhood of $\p$.  Since $K(-)$ commutes with filtered colimits \cite[Theorem 1.1]{BGT}, we conclude that the $L_{T(n)} \colim_{i \in I}K(R^{(i)}_{\p})\simeq 0$, or equivalently that
\[
\colim_{i\in I}K(R^{(i)}_{\p}) \otimes T(n) \simeq 0.
\]
But we can choose $T(n)$ to be an $\E_1$-ring spectrum, so this is a filtered colimit of rings; thus, it vanishes if and only if it vanishes at some finite stage.  It follows that $L_{T(n)}K(R^{(i)}_{\p}) \simeq 0$ for some $i\in I$.  Thus, we have shown that every $\p \in \Spec R$ admits an \'{e}tale neighborhood on which $L_{T(n)}K(-)$ vanishes.  Since this defines an \'{e}tale sheaf by \Cref{thm:etaledesc}, we conclude that $L_{T(n)}K(R)$ must have been zero.  
\end{proof}

We are now ready to prove that $K(E_n)$ has height $n+1$.  

\begin{proof}[Proof of \Cref{thm:Eredshift}]
Choose any formal group $\mathbb{G}_0'$ over $k$ of height $n+1$, and let $E_{n+1} = E_{\mathbb{G}_0',k}$ denote the corresponding Lubin-Tate theory.  By \Cref{thm:main}, the ring $R = E_{n+1}^{tC_p}$ has the property that $L_{T(n+1)}K(R) \not \simeq 0$.  Therefore, applying \Cref{lem:stalk}, we may choose a prime ideal $\p \in \pi_0(R)$ such that $L_{T(n+1)}K(R^{\sh}_{\p}) \not\simeq 0$.  Since $L_{T(n+1)}R = 0$ (for instance, by \Cref{thm:Kuhn}), it follows from \Cref{thm:LMMT} that $L_{T(n+1)}K(L_{T(n)} R^{\sh}_{\p}) \simeq L_{T(n+1)}K(R^{\sh}_{\p})$ is also nonzero.  In fact, since $K(n)$ and $T(n)$ localization coincide for $BP$-modules (\cite[Corollary 1.10]{Hovey}), we also have $L_{T(n+1)}K(L_{K(n)} R^{\sh}_{\p}) \not\simeq 0$.  Thus, in order to see that $K(E_n)$ has height $n+1$, we will exhibit a map of $\E_{\infty}$-rings $E_n \to L_{K(n)}R^{\sh}_{\p}$.  

Let $\Gamma = \Spf (R^{\sh}_{\p})^0(\mathbb{C}P^{\infty})$ denote the canonical formal group on $\pi_0(R^{\sh}_{\p}) = \pi_0(R)^{\sh}_{\p}$. By \Cref{thm:Eunivprop}, in order to produce an $\E_{\infty}$-ring map $E_n = E_{k,\mathbb{G}_0} \to L_{K(n)}R^{\sh}_{\p}$, it suffices to exhibit a ring homomorphism $k \to \pi_0(L_{K(n)} R^{\sh}_{\p})/I_n$ and an isomorphism between $\Gamma$ and $\mathbb{G}_0$ over $\pi_0(L_{K(n)}R^{\sh}_{\p})/I_n$.  In fact, since the ideal $I_n$ is already defined in $\pi_0(R^{\sh}_{\p})$, it suffices to produce $f:k \to \pi_0(R^{\sh}_{\p})/I_n$ and an isomorphism $f^*\mathbb{G}_0 \cong \Gamma_{\pi_0(R^{\sh}_{\p})/I_n}$ of formal groups.  

Since $\pi_0(R) = \pi_0(E_{n+1}^{tC_p})$ is naturally an algebra over the Witt vectors $W(k)$ and $p\in I_n$, there is a canonical inclusion $k\subset \pi_0(R)/I_n$; composing this with the map $\pi_0(R)/I_n \to \pi_0(R^{\sh}_{\p})/I_n$, we define the ring homomorphism $f:k \to \pi_0(R^{\sh}_{\p})/I_n.$  It remains to show the requisite isomorphism of formal groups.  

\begin{lem}
Let $\kappa(\p) = \pi_0(R^{\sh}_{\p}) / \p$ denote the residue field at $\p$ and let $\Gamma_{\kappa(\p)}$ denote the reduction of the formal group $\Gamma$ modulo $\p$.  Then $\Gamma_{\kappa(\p)}$ has height $n$.
\end{lem}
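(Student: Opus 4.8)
The plan is to prove the two inequalities $\operatorname{ht}\Gamma_{\kappa(\p)}\le n$ and $\operatorname{ht}\Gamma_{\kappa(\p)}\ge n$ separately. First I would record the relevant algebra. Write $\mathbb{E}_0=\pi_0(E_{n+1})=W(k)[[u_1,\dots,u_n]]$, with maximal ideal $\m=(p,u_1,\dots,u_n)$, and recall that one may choose Araki generators so that $v_i$ agrees with $u_i$ modulo $(p,v_1,\dots,v_{i-1})$ up to a unit for $1\le i\le n$, while $v_{n+1}\in\mathbb{E}_0^{\times}$. By \Cref{rmk:cxorientedtCp} applied to $E_{n+1}$ and passing to degree zero via $2$-periodicity, $\pi_0(R)=\pi_0(E_{n+1}^{tC_p})\cong\mathbb{E}_0((w))/\psi(w)$, where $w:=x\beta$ has degree $0$ and $\psi\in\mathbb{E}_0[[w]]$ is the $p$-series of the universal deformation $\mathbb{G}$ of $\mathbb{G}_0'$ in the coordinate $w$. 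Since $\mathbb{G}_0'$ has height $n+1$, the reduction of $\psi$ modulo $\m$ is $w^{p^{n+1}}$ times a unit power series, so Weierstrass preparation over the complete local ring $\mathbb{E}_0$ gives $\pi_0(R)\cong\mathbb{E}_0[w]/g(w)[w^{-1}]$ with $g$ a distinguished polynomial of degree $p^{n+1}-1$. In particular $\pi_0(R)$ is flat over $\mathbb{E}_0$, and reduction modulo $\m$ gives $\pi_0(R)\otimes_{\mathbb{E}_0}k\cong k[w]/(w^{p^{n+1}-1})[w^{-1}]=0$; hence $(p,u_1,\dots,u_n)$ is the unit ideal in $\pi_0(R)$, and therefore also in the strictly henselian local ring $\pi_0(R^{\sh}_\p)$ --- a concrete shadow of Kuhn's blueshift (\Cref{thm:Kuhn}). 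Finally, $\pi_0(R^{\sh}_\p)$, a filtered colimit of \'etale $\pi_0(R)$-algebras, is flat over $\mathbb{E}_0$, and $\Gamma$ is the base change of $\mathbb{G}$ along $\mathbb{E}_0\to\pi_0(R^{\sh}_\p)$.

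For the upper bound: since $(p,v_1,\dots,v_n)=(p,u_1,\dots,u_n)=(1)$ in the local ring $\pi_0(R^{\sh}_\p)$, these elements do not all lie in its maximal ideal; letting $i\in\{0,\dots,n\}$ be minimal with $u_i$ a unit (with $u_0:=p$), the elements $v_0,\dots,v_{i-1}$ vanish in $\kappa(\p)$ while $v_i$ does not, so $\Gamma_{\kappa(\p)}$ has height $i\le n$.

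For the lower bound I would first note that $\kappa(\p)$ has characteristic $p$: otherwise $p$ is invertible in $\pi_0(R^{\sh}_\p)$, which is $2$-periodic, so $R^{\sh}_\p$ is a rational spectrum and hence $L_{T(n)}$-acyclic for $n\ge 1$ --- contradicting $L_{T(n)}R^{\sh}_\p\ne 0$, which holds by \Cref{thm:introCMNN} since $L_{T(n+1)}K(R^{\sh}_\p)\ne 0$. Now suppose, for contradiction, that $\operatorname{ht}\Gamma_{\kappa(\p)}=h<n$; by the previous remark $h\ge 1$. As $v_0,\dots,v_{h-1}$ vanish in $\kappa(\p)$, the elements $p,u_1,\dots,u_{h-1}$ lie in the maximal ideal of $\pi_0(R^{\sh}_\p)$; by flatness over $\mathbb{E}_0$, and since the quotient surjects onto $\kappa(\p)\ne 0$, they form a regular sequence there. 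Using $2$-periodicity to identify $v_i$-multiplication on any $R^{\sh}_\p$-module with the periodicity equivalence $\beta^{p^i-1}$ rescaled by its $\pi_0$-component $\bar v_i$, one computes $\pi_*(R^{\sh}_\p/(p,v_1,\dots,v_{h-1}))=(\pi_0(R^{\sh}_\p)/(p,u_1,\dots,u_{h-1}))[\beta^{\pm 1}]$. On this module $v_h$ acts invertibly, since its $\pi_0$-component equals $u_h$ times a unit and $u_h\notin\p$ --- because $v_h\ne 0$ in $\kappa(\p)$ --- makes $u_h$ a unit of $\pi_0(R^{\sh}_\p)$. Hence $R^{\sh}_\p/(p,v_1,\dots,v_h)=0$, so $R^{\sh}_\p$ is $T(m)$-acyclic for every $m>h$; in particular $L_{T(n)}R^{\sh}_\p=0$, a contradiction. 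Therefore $\operatorname{ht}\Gamma_{\kappa(\p)}=n$.

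The crux is the lower bound: converting the homotopical input $L_{T(n)}R^{\sh}_\p\ne 0$ into the purely algebraic assertion that $p,u_1,\dots,u_{n-1}$ all lie in the maximal ideal of the (local, $\mathbb{E}_0$-flat, but not obviously Noetherian) ring $\pi_0(R^{\sh}_\p)$. The $2$-periodicity of $R^{\sh}_\p$ is exactly what makes this work, since it lets one read off $v_i$-self-maps from $\pi_0$ and run the Koszul computation with the regular sequence $p,u_1,\dots,u_{h-1}$. The other load-bearing ingredient, used in both directions, is the concrete blueshift identity $(p,u_1,\dots,u_n)=(1)$ in $\pi_0(E_{n+1}^{tC_p})$, which follows from $\pi_0(E_{n+1}^{tC_p})\cong\mathbb{E}_0[w]/g(w)[w^{-1}]$ with $g$ distinguished of degree $p^{n+1}-1$.
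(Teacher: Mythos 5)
Your proof is correct, but it reaches the conclusion by a substantially more computational route than the paper's.

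For the upper bound, the paper simply invokes Kuhn's blueshift theorem (\Cref{thm:Kuhn}) to get $L_{K(n+1)}R = 0$, from which ``$\Gamma_{\kappa(\p)}$ has height at most $n$'' is read off. You instead make this concrete via Weierstrass preparation, establishing the identity $(p,u_1,\dots,u_n)=(1)$ in $\pi_0(E_{n+1}^{tC_p})$ by hand. This is a perfectly valid (and, as you say, a concrete shadow of Kuhn's theorem), but it is doing more work than necessary; the soft argument suffices and scales better if one were to replace $E_{n+1}$ by a less explicit ring.

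For the lower bound, the two arguments branch at the point where one converts ``$\Gamma_{\kappa(\p)}$ has height $h<n$'' into a contradiction. The paper notes that some $v_i$ with $i\le n-1$ is then a unit in $\kappa(\p)$, hence in the local ring $\pi_0(R^{\sh}_\p)$, so $R^{\sh}_\p$ is $T(n)\vee T(n+1)$-acyclic; by \Cref{thm:LMMT} this kills $L_{T(n+1)}K(R^{\sh}_\p)$, contradicting the choice of $\p$. You instead first extract $L_{T(n)}R^{\sh}_\p\ne 0$ from $L_{T(n+1)}K(R^{\sh}_\p)\ne 0$ via the contrapositive of \Cref{thm:introCMNN}, and then run a Koszul/regular-sequence computation (using $2$-periodicity and flatness over $\mathbb{E}_0$) to show that height $h<n$ forces $L_{T(n)}R^{\sh}_\p=0$. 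Both routes are correct and both ultimately trace back to the choice of $\p$; the paper's avoids the regular-sequence bookkeeping and the detour through $K$-theory twice, at the modest cost of invoking LMMT rather than CMNN's Theorem~A. Your version has the minor virtue of isolating precisely which $v_i$'s must lie in $\p$ and why, which could be useful if one wanted more quantitative control on $\p$, but it is not needed for the stated conclusion.

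One small presentational point: your Koszul step implicitly uses the standard fact that, for a complex orientable $2$-periodic ring, $R/(p,v_1,\dots,v_h)=0$ forces $T(m)$-acyclicity for $m>h$ (passing from $K(m)$- to $T(m)$-acyclicity via the nilpotence theorem, as in \Cref{rmk:dhsnilp}); it would be worth flagging this, since it is exactly the same fact the paper uses when it writes ``would be $T(n)\vee T(n+1)$-acyclic.''
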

\begin{proof}[Proof of Lemma]
$\Gamma_{\kappa(\p)}$ has height at most $n$ because the ring $L_{K(n+1)}R= L_{K(n+1)} E_{n+1}^{tC_p} = 0$ (for instance by \Cref{thm:Kuhn}).  If $\Gamma_{\kappa(\p)}$ had height at most $n-1$, then some $v_i$, $0\leq i \leq n-1$, would be invertible in $\kappa(\p)$.  Since $\pi_0(R)^{\sh}_{\p}$ is a local ring, this means that this $v_i$ is invertible in $\pi_0(R^{\sh}_{\p})$, which implies that $R^{\sh}_{\p}$ would be $T(n)\vee T(n+1)$-acyclic.  By \Cref{thm:LMMT}, this would imply $L_{T(n+1)}K(R^{\sh}_{\p}) \simeq 0$ which contradicts our choice of $\p$. 
\end{proof}

It follows that the ideal $I_n \subset \pi_0(R^{\sh}_{\p})$ is contained in $\p$.  Consequently, the ring $\pi_0(R^{\sh}_{\p})/I_n$ is strictly henselian (with maximal ideal $\p$), and $\Gamma_{\pi_0(R^{\sh}_{\p})/I_n}$ is a formal group of height exactly $n$ over it.  Recall that any two formal group laws of height $n$ over a strictly henselian ring are isomorphic (this is essentially a result of Lazard; see, for instance, \cite[Lecture 13, Theorem 11]{LurChrom}).  In particular, $\Gamma_{\pi_0(R^{\sh}_{\p})/I_n}$ and $f^*\mathbb{G}_0$ are isomorphic, as desired.  
\end{proof}

\section{Iterated $K$-theory}\label{sec:iteratedK}

\begin{ntn*}
For a functor $G$, we will use the superscript $G^{(n)}$ to denote the $n$th iterate of $G$.  
\end{ntn*}

In this section, we prove \Cref{thm:iteratedK}, which we restate here for the reader's convenience:

\begin{thm:iteratedK}
Let $n\geq 0$ and let $K^{(n)}$ denote the $n$-fold iterate of algebraic $K$-theory.  Then, for any field $k$ of characteristic different from $p$, the spectrum $L_{T(n)}K^{(n)}(k)$ is nonzero.  
\end{thm:iteratedK}

For the purposes of exposition, we will start by treating the case $k=\Q$ in \Cref{sect:Q} and prove the general case in the following sections.  The idea for extending to the general case and the arguments in \Cref{sub:generalk} are due to Akhil Mathew: we are grateful to him for allowing us to include his work in this paper; however, any errors are solely the responsibility of the author.

\subsection{The case $k=\Q$}\label{sect:Q}

\begin{ntn*}
Let $\tp : \Sp \to \Sp$ be the functor given by the formula $$\tp X = (\tau_{\geq 0}X)^{tC_p}.$$  As usual, the operation $(-)^{tC_p}$ is taken with respect to the trivial action of $C_p$.  
\end{ntn*}

Let $E_n$ be a height $n$ Lubin-Tate theory and set $R := \tp^{(n)}E_n$.  The nonvanishing of $L_{T(n)}K^{(n)}(\Q)$ follows from the following two propositions about $R$: 

\begin{prop}\label{prop:iterated1}
There exists a canonical $\E_{\infty}$-ring map $L_{T(n)}K^{(n)}(\Q) \to  L_{T(n)}K^{(n)}(R)$.  
\end{prop}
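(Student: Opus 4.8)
The main point is the following height estimate, which I would establish first: the $\E_\infty$-ring $R = \tp^{(n)} E_n$ has chromatic height $0$, i.e.\ $L_{T(i)} R \simeq 0$ for every $i \geq 1$. To see this, note first that by \Cref{lem:covertate} the cofiber $\tau_{\leq -1}X$ of $\tau_{\geq 0} X \to X$ is coconnective and hence has $T(i)$-acyclic $C_p$-Tate construction for all $i$; applying $(-)^{tC_p}$ to the fiber sequence $\tau_{\geq 0} X \to X \to \tau_{\leq -1} X$ then gives $L_{T(i)}(\tp X) \simeq L_{T(i)}(X^{tC_p})$ for every $\E_\infty$-ring $X$ and every $i \geq 0$. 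So it is enough to show that $(-)^{tC_p}$ carries $\E_\infty$-rings of height $\leq m$ to $\E_\infty$-rings of height $\leq m-1$ and then to iterate $n$ times, starting from the height-$n$ ring $E_n$. For $T(m)$-local inputs this height drop is precisely Kuhn's theorem (\Cref{thm:Kuhn}); \textbf{the step I expect to be the main obstacle} is that the intermediate rings $\tp^{(j)} E_n$ are themselves Tate constructions, hence neither $T(i)$-local nor bounded below, so Kuhn's theorem does not apply verbatim at each stage. I would get around this by working throughout with the $K(i)$-localizations instead --- legitimate for ring spectra by \Cref{rmk:dhsnilp}, and for which the Tate blueshift is available with no locality hypothesis --- or, failing that, by extracting the vanishing directly from the homotopy groups of each $\tp^{(j)} E_n$, which are computable via the $[p]$-series formula of \Cref{rmk:cxorientedtCp}.

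Granting that $R$ has height $0$, the rest is formal. Let $R_\Q := L_{T(0)} R$ be the rationalization; since $L_{T(i)} R \simeq 0$ and $L_{T(i)} R_\Q \simeq 0$ for all $i \geq 1$, the fiber of $R \to R_\Q$ is $T(i)$-acyclic for every $i \geq 0$, so $R \to R_\Q$ is a $T(i)$-equivalence for all $i \geq 0$. Now iterate \Cref{thm:LMMT}: a map that is a $T(j-1)\oplus T(j)$-equivalence for all $j$ becomes, upon applying $K$, a $T(j)$-equivalence for all $j \geq 1$; applying $K$ once more, for all $j \geq 2$; and after $n$ applications one finds that $K^{(n)}(R) \to K^{(n)}(R_\Q)$ is a $T(n)$-equivalence. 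Finally $R_\Q$ is rational, hence canonically an $\E_\infty$-algebra over $\Q$ (indeed over $\Q(\zeta_p)$; compare \Cref{exm:kutcp}), so the unit is a canonical $\E_\infty$-ring map $\Q \to R_\Q$; applying $K^{(n)}$ and then $L_{T(n)}$, and composing with the inverse of the equivalence just obtained, yields the desired canonical $\E_\infty$-ring map
\[
L_{T(n)} K^{(n)}(\Q) \longrightarrow L_{T(n)} K^{(n)}(R_\Q) \xleftarrow{\;\sim\;} L_{T(n)} K^{(n)}(R).
\]

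Thus the only nonformal ingredient is the height bound on $R$, and the only delicate point there is that the blueshift of $(-)^{tC_p}$ must be applied to ring spectra that are not assumed local or connective --- which is exactly the kind of subtlety \Cref{rmk:dhsnilp} is designed to absorb.
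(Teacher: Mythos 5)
Your formal reduction is essentially the paper's: both arguments show that $R$ becomes rational (indeed, an algebra over $\Q$ or $\Q(\zeta_p)$) after applying $L_n^{p,f} = L_{T(0)\vee\cdots\vee T(n)}$, and both then iterate \Cref{thm:LMMT} to transport a unit map $\Q \to L_n^{p,f}R$ across an equivalence $L_{T(n)}K^{(n)}(R)\simeq L_{T(n)}K^{(n)}(L_n^{p,f}R)$. Your reduction $L_{T(i)}(\tp X)\simeq L_{T(i)}(X^{tC_p})$ via \Cref{lem:covertate} is also correct.

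The gap is in the step you flag as the main obstacle, and the proposed remedies do not close it. The claim that ``$(-)^{tC_p}$ carries height $\leq m$ rings to height $\leq m-1$ rings'' is not a consequence of \Cref{thm:Kuhn} once the input is not $T(m)$-local. Concretely, suppose $A$ has $L_{T(i)}A\simeq 0$ for $i>m$; then for $i>m$ the ring map $A\to A^{tC_p}$ shows $A^{tC_p}$ is $T(i)$-acyclic, but for $i=m$ you need $L_{T(m)}A^{tC_p}\simeq 0$, and Kuhn applies to $(L_{T(m)}A)^{tC_p}$, not to $A^{tC_p}$. The difference is $F^{tC_p}$ where $F=\fib(A\to L_{T(m)}A)$, and there is no reason for $(-)^{tC_p}$ to preserve $T(m)$-acyclicity of the non-ring spectrum $F$ (the Segal conjecture is the standard warning here). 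The remedy via \Cref{rmk:dhsnilp} does not help: that remark identifies $T(i)$- and $K(i)$-acyclicity for \emph{ring} spectra, but the $K(n)$-local Greenlees--Sadofsky blueshift still requires the input to be $K(n)$-local, so there is no ``Tate blueshift with no locality hypothesis.'' Your second remedy (direct computation of $\pi_*\tp^{(j)}E_n$) could in principle be carried out, but it is not formal and is not what the paper does.

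The paper sidesteps the iteration problem entirely by passing to genuine equivariant homotopy theory. Writing $G=C_p^{\times n}$ and $e_n$ for the equivariant connective cover of $E_n$ with trivial Borel $G$-action, one constructs an $\E_\infty$-ring map $\Phi^G e_n\to R$ by factoring the geometric fixed points through connective covers at each stage; a lemma analogous to \Cref{lem:covertate} (namely \Cref{lem:covertategen}) gives $L_{T(j)}\Phi^G e_n\simeq L_{T(j)}\Phi^G E_n$ for $j\geq 1$; and the cited theorem of \cite{BHNNNS} asserts that $\Phi^G E_n$ is rational \emph{in one shot}, with no intermediate height bookkeeping. Since $L_{T(j)}\Phi^G E_n\simeq 0$ for $j\geq 1$, the ring map forces $L_{T(j)}R\simeq 0$ for $j\geq 1$. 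This is a genuinely different idea from the one you proposed, and it is precisely what replaces the missing ``iterate Kuhn'' step.
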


\begin{prop}\label{prop:iterated2}
The iterated algebraic $K$-theory spectrum $K^{(n)}(R)$ is not $T(n)$-acyclic. 
\end{prop}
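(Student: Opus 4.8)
The plan is to run the argument of \Cref{thm:main} $n$ times in succession, carrying along the residual circle actions that accumulate. First one reduces to a connective input: since $\tp X$ depends only on $\tau_{\ge 0}X$, we have $R = \tp^{(n)}E_n = \tp^{(n)}(e)$ for $e := \tau_{\ge 0}E_n$, and $L_{T(n)}e \neq 0$ because $E_n \simeq e[\beta^{-1}]$ is a filtered colimit of desuspensions of $e$ and $L_{T(n)}$ commutes with filtered colimits. The key structural observation is that $\tp^{(n)}(e)$ carries not one but $n$ commuting residual circle actions: writing $\tp^{(j)}(e) = (\tau_{\ge 0}\tp^{(j-1)}(e))^{tC_p}$, the outermost $(-)^{tC_p}$ contributes a residual $S^1/C_p$-action, while the residual actions already present on $\tp^{(j-1)}(e)$ are transported by the functor $(\tau_{\ge 0}(-))^{tC_p}$, which is lax symmetric monoidal and so preserves group actions. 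Write $T^n$ for the resulting torus of commuting actions on $R$; by functoriality of $K$, every iterate $K^{(i)}(R)$ inherits a $T^n$-action.

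Next one constructs an $\E_{\infty}$-ring map $K^{(n)}(R) \to R^{hT^n}$, where the target denotes homotopy fixed points for all $n$ residual circles, by iterating the construction in the proof of \Cref{thm:main}. At the outermost stage the circle-invariant Dennis trace gives $K^{(n)}(R) = K(K^{(n-1)}(R)) \to \THH(K^{(n-1)}(R))^{hS^1}$, and the universal property of $\THH$ (\Cref{THHuprop}), applied with $B = K^{(n-1)}(R)$ equipped with the residual $S^1/C_p$-action coming from the outermost $(-)^{tC_p}$ in $R$, extends $\mathrm{id}_B$ to an $S^1$-equivariant ring map $\THH(B)\to B$ and hence, on fixed points, to a map $K^{(n)}(R) \to (K^{(n-1)}(R))^{hS^1/C_p}$. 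Everything here is natural in the input ring-with-action, so this map is equivariant for the remaining $n-1$ circles; repeating the construction on $K^{(n-1)}(R) = K(K^{(n-2)}(R))$ with the next residual circle, taking fixed points, and iterating $n$ times produces the desired $\E_{\infty}$-ring map $K^{(n)}(R) \to R^{hT^n}$.

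It then remains to prove $L_{T(n)}R^{hT^n} \neq 0$, after which \Cref{rmk:ringmap}, applied to the induced $\E_{\infty}$-ring map $L_{T(n)}K^{(n)}(R) \to L_{T(n)}R^{hT^n}$, finishes the proof. Here one peels the $n$ circles off one at a time with the Tate orbit lemma (\Cref{prop:tateorbit}): since each $\tau_{\ge 0}\tp^{(j-1)}(e)$ is bounded below, taking homotopy fixed points for the corresponding residual circle converts the relevant $(-)^{tC_p}$ into a $(-)^{tS^1}$, so running this $n$ times identifies $R^{hT^n}$, $p$-adically, with the $n$-fold iterate of $X \mapsto (\tau_{\ge 0}X)^{tS^1}$ applied to $e$. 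Each ring occurring in this iterate is an algebra over the previous one, hence over $e$, hence complex orientable; it is therefore $2$-periodic after inverting the Euler class, and thus a filtered colimit of its own connective cover, so $L_{T(n)}$ is insensitive to the interleaved truncations. An $n$-fold application of \Cref{prop:tS1height} then propagates $L_{T(n)}e \neq 0$ to $L_{T(n)}R^{hT^n} \neq 0$ (using also that, for $n \ge 1$, $L_{T(n)}$ does not distinguish a spectrum from its $p$-completion).

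The hard part will be the bookkeeping in these two iterations: arranging the $n$-step tower of (Dennis trace)$\,\circ\,$($\THH$ universal property) maps so that each stage is equivariant for the circles not yet used, so that the next homotopy fixed points may be taken; and dually verifying that the $n$ applications of the Tate orbit lemma can be performed in sequence, i.e.\ that the various residual circle operations commute past one another as needed. I note that the individual blueshift of $\tp$ (the content of \Cref{thm:Kuhn}) plays no role in this argument; all that matters is that the composite $n$-fold redshift effected by $K^{(n)}$ lands, via the map constructed above, in a ring of the same telescopic height as $e$.
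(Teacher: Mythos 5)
Your overall strategy matches the paper's: construct an $\E_\infty$-ring map $K^{(n)}(R)\to R^{h(S^1/C_p)^{\times n}}$ via the iterated Dennis trace and the universal property of $\THH$, then show the target is $T(n)$-locally nonzero using the Tate orbit lemma and \Cref{prop:tS1height}. (For the map itself, the paper packages the argument slightly differently --- it forms $K^{(n)}(R)\to\THH^{(n)}(R)^{h(S^1)^{\times n}}$ all at once and applies the $(S^1)^{\times n}$-analogue of \Cref{THHuprop} in a single step, rather than peeling off one circle at a time --- but these are equivalent.)

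However, there is a genuine gap in your treatment of the target. You claim that $n$ applications of the Tate orbit lemma identify $R^{hT^n}$, $p$-adically, with the $n$-fold iterate of $X\mapsto(\tau_{\ge 0}X)^{tS^1}$ applied to $e$. This does not follow. After one application of Tate orbit you get $R^{hS^1/C_p^{(n)}}\simeq_p (\tau_{\ge 0}\tp^{(n-1)}E_n)^{tS^1}$, and you still need to take fixed points for the remaining $n-1$ circles. But the remaining $(-)^{tC_p}$ operations that you would like to convert to $(-)^{tS^1}$ are now buried inside a $\tau_{\ge 0}$ and a $(-)^{tS^1}$ (for a different circle), neither of which commutes past $(-)^{h(S^1/C_p)}$, so the Tate orbit lemma cannot be applied again in the form you want. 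The paper sidesteps this precisely by \emph{not} attempting an identification: since $\tau_{\ge 0}\tp^{(k-1)}E_n$ is complex orientable, it is an (equivariant) retract of its own $S^1$-Tate construction, so taking fixed points shows that $(\tp^{(k)}E_n)^{h(S^1/C_p)^{\times k}}$ has $(\tau_{\ge 0}\tp^{(k-1)}E_n)^{h(S^1/C_p)^{\times k-1}}$ as a retract. One then inducts on this retract, rather than on a purported identification with an iterated Tate construction. Your ``insensitive to interleaved truncations'' remark is aimed at a real issue (passing between a ring and its connective cover) but does not address the more serious problem of commuting $\tau_{\ge 0}$ and $(-)^{tS^1}$ past the remaining homotopy fixed points.
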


The proof of \Cref{prop:iterated1} relies on the theorem of \cite{LMMT} (\Cref{thm:LMMT}), which implies that one can replace $R$ by its localization at $T(0)\vee T(1)\vee \cdots \vee T(n)$, which we denote by $L_n^{p,f} R$.   Essentially by the blueshift phenomenon for Tate cohomology, this localization $L_n^{p,f} R$ is rational, and thus receives a ring map from $\Q$.  The proof of \Cref{prop:iterated2} uses the iterated Dennis trace map and an iterated variant of \Cref{prop:tS1height}.  We now give the full proofs:

\begin{proof}[Proof of \Cref{prop:iterated1}]

First, observe that by the theorem of Land-Mathew-Meier-Tamme (\Cref{thm:LMMT}), the map $$L_{T(n)}K^{(n)}(R) \to L_{T(n)}K(L_{T(n)\vee T(n-1)} K^{(n-1)}(R))$$ is an equivalence.  Applying \Cref{thm:LMMT} in this manner repeatedly, we conclude that the natural map $$L_{T(n)}K^{(n)}(R) \to L_{T(n)}K^{(n)}(L_{n}^{p,f} R)$$ is an equivalence.  Thus, it suffices to produce a map $\Q \to L_{n}^{p,f} R$.  This follows immediately from the following lemma, which implies that $L_{n}^{p,f} R \simeq L_{T(0)} R$, and is therefore rational (as everything in sight is $p$-local):


\begin{lem}
The ring $R$ is $T(1)\vee T(2)\vee \cdots \vee T(n)$-acyclic.  
\end{lem}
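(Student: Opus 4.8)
The plan is to prove by induction on $j\in\{0,1,\dots,n\}$ the stronger assertion that $L_{T(m)}\tp^{(j)}E_n\simeq0$ for all $m\ge n-j+1$ — i.e.\ that $\tp^{(j)}E_n$ has ``height at most $n-j$'' — and then read off the lemma by setting $j=n$. The base case $j=0$ holds because $E_n$ is a nonzero $K(n)$-local ring spectrum, hence $E(n)$-local, so $L_{T(m)}E_n\simeq0$ for every $m>n$.

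For the inductive step I would argue as follows. Since $\tau_{\ge0}\tp^{(j)}E_n\to\tp^{(j)}E_n$ has coconnective fiber, applying the exact functor $(-)^{tC_p}$ and using \Cref{lem:covertate} shows that the canonical map
\[
\tp^{(j+1)}E_n\;=\;\big(\tau_{\ge0}\tp^{(j)}E_n\big)^{tC_p}\;\longrightarrow\;\big(\tp^{(j)}E_n\big)^{tC_p}
\]
is a $T(m)$-equivalence for every $m\ge0$; so it suffices to bound the height of $\big(\tp^{(j)}E_n\big)^{tC_p}$. This reduces the whole proof to the following ``blueshift'' statement, which I would isolate and prove separately: if $S$ is an $\E_{\infty}$-ring with $L_{T(m)}S\simeq0$ for all $m\ge h+1$, then $L_{T(m)}S^{tC_p}\simeq0$ for all $m\ge h$. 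Applying it with $S=\tp^{(j)}E_n$ and $h=n-j$ (the hypothesis being exactly the inductive hypothesis) gives $L_{T(m)}\tp^{(j+1)}E_n\simeq0$ for $m\ge n-j=n-(j+1)+1$, closing the induction; at $j=n$ this says $\tp^{(n)}E_n$ is $T(m)$-acyclic for all $m\ge1$, in particular $T(1)\vee\cdots\vee T(n)$-acyclic.

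To prove the blueshift statement, split into two ranges. For $m\ge h+1$: since $S$ is a ring with $L_{T(m)}S\simeq0$, \Cref{rmk:dhsnilp} gives $S\otimes K(m)\simeq0$; as $S^{tC_p}$ is an $S$-algebra (via the ring maps $S\to S^{hC_p}\to S^{tC_p}$ for the trivial action), the $S\otimes K(m)$-module $S^{tC_p}\otimes K(m)$ is a module over the zero ring and hence vanishes, so \Cref{rmk:dhsnilp} applied to the ring $S^{tC_p}$ yields $L_{T(m)}S^{tC_p}\simeq0$. For the remaining value $m=h$: here $L_{T(h+1)}S\simeq0$, so the wedge localization $L_{T(h)\vee T(h+1)}S$ coincides with $L_{T(h)}S$ (via the standard fracture square for $L_{T(h)\vee T(h+1)}$), and invoking the strengthened form of Kuhn's blueshift theorem — that $L_{T(h)}(X^{tC_p})$ depends only on $L_{T(h)\vee T(h+1)}X$, a refinement of \Cref{thm:Kuhn} due to Kuhn — we obtain $L_{T(h)}S^{tC_p}\simeq L_{T(h)}\big((L_{T(h)}S)^{tC_p}\big)$, which vanishes by \Cref{thm:Kuhn} itself since $L_{T(h)}S$ is $T(h)$-local.

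The delicate point — and the step I expect to require the most care — is precisely this last case: the bare form of \Cref{thm:Kuhn} only controls Tate cohomology of $T(h)$-local inputs, whereas the rings $\tp^{(j)}E_n$ are very far from monochromatic, so one genuinely needs the refinement that passes to $L_{T(h)\vee T(h+1)}$ before applying \Cref{thm:Kuhn}. (For these particular complex-orientable rings one could instead try to see the top-height vanishing directly from the explicit $p$-series presentation of \Cref{rmk:cxorientedtCp} — the formal group of $(\tp^{(j)}E_n)^{tC_p}$ acquires a nonzero $p$-torsion point, which forces its formal height to drop — but extracting this cleanly is complicated by the connective-cover truncations built into $\tp$, so I would favour citing Kuhn's blueshift.)
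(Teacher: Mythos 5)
Your route is genuinely different from the paper's, and the load-bearing step is not established. The paper does \emph{not} attempt an inductive ``one Tate step at a time'' blueshift argument. Instead it passes to genuine $G$-equivariant homotopy theory for $G = C_p^{\times n}$: it builds an $\E_\infty$-ring map $\Phi^G e_n \to R = \tp^{(n)} E_n$ (by composing factorizations $\Phi^{C_p}(-) \to (-)^{tC_p}$ through connective covers), shows via a variant of \Cref{lem:covertate} that $L_{T(j)}\Phi^G e_n \to L_{T(j)}\Phi^G E_n$ is an equivalence for $j\ge 1$, and then invokes the computation of \cite{BHNNNS} (going back to \cite{HKR}) that $\Phi^{C_p^{\times n}}E_n$ is rational. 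The conclusion then drops out via \Cref{rmk:ringmap}: $L_{T(j)}R$ receives a ring map from the zero ring for $1\le j\le n$. The crucial feature of this approach is that all $n$ Tate constructions are handled at once by a single, known computation specific to Lubin--Tate theories, rather than by iterating an abstract blueshift principle.

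The gap in your argument is precisely where you flag it. You need, for the inductive step, that $L_{T(h)}(S^{tC_p})$ depends only on $L_{T(h)\vee T(h+1)}S$, and you attribute this to Kuhn. But \Cref{thm:Kuhn} as stated (and as Kuhn proves it) takes as input a spectrum that is already $T(h)$-\emph{local}; the rings $\tp^{(j)}E_n$ are nowhere near monochromatic, and the ``strengthened'' form you invoke — that the Tate construction, $T(h)$-locally, sees only the $T(h)\vee T(h+1)$-localization of its input — is not a statement I can find in the literature, is not used anywhere in this paper, and you neither prove it nor give a precise citation. It has the same \emph{shape} as \Cref{thm:LMMT} (which says $L_{T(n)}K(-)$ depends only on $L_{T(n-1)\vee T(n)}$), and I suspect you may be transplanting that slogan from $K$-theory to Tate constructions, where it is not known to hold. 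Absent this lemma, your induction does not close: at the step $m=h$ you have $L_{T(h+1)}S=0$ but no control over the interaction between the high-chromatic part of $S$ (heights $\ge h+2$) and the $T(h)$-localization of $S^{tC_p}$. To make your strategy work you would either need to prove this one-step blueshift lemma in the required generality, or (as the paper does) exploit the specific structure of $E_n$ — either through the geometric-fixed-point computation of \cite{BHNNNS}, or through the $p$-series presentation of \Cref{rmk:cxorientedtCp}, which, as you correctly note, is obstructed by the connective-cover truncations built into $\tp$. The first two steps of your argument (the reduction via \Cref{lem:covertate}, and the treatment of $m\ge h+1$ via \Cref{rmk:dhsnilp} and \Cref{rmk:ringmap}) are sound.
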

\begin{proof}
This lemma can be approached computationally, but we will give a more hands-off proof using genuine equivariant homotopy theory; the reader is referred to \cite{LMS, MNN} for additional background.  Let $G = (C_p)^{\times n}$, and regard $E_n$ as a genuine $G$-equivariant spectrum such that it is Borel-equivariant (i.e., the natural map $E_n^H\to E_n^{hH}$ is an equivalence for all $H\subset G$), and the underlying $G$-action is trivial.  Let $e_n$ denote the equivariant connective cover of the genuine equivariant spectrum $E_n$ (in the usual Mackey $t$-structure), which has the feature that the natural map $e_n \to E_n$ induces connective covers
$$e_n^H \simeq  \tau_{\geq 0}(E_n^{H}) \to E_n^H$$  
on genuine fixed points for all $H\subset G$.

\begin{rmk}\label{rmk:geometric}
For a genuine $G$-spectrum $X$, recall that one can consider its geometric fixed point spectrum $\Phi^G X$, which admits the following description (for any finite abelian group $G$): it fits into a cofiber sequence $$Y \to X^{G} \to \Phi^{G} X$$ where the spectrum $Y$ is given by the formula $$Y \simeq \colim_{H \in \mathrm{Sub}(G)^-} (X^{H})_{hG/H}.$$  Here, $\mathrm{Sub}(G)^-$ denotes the poset of proper subgroups of $G$ ordered by inclusion, and the maps in the diagram are the genuine additive transfer maps; accordingly, the map $Y\to X^G$ is induced by the genuine additive transfer.  
\end{rmk}

We first note the following variant of \Cref{lem:covertate}:

\begin{lem}\label{lem:covertategen}
The natural map $L_{T(j)}\Phi^{G} e_n  \to L_{T(j)} \Phi^G E_n$ is an equivalence for $j\geq 1$.
\end{lem}
\begin{proof}
By \Cref{rmk:geometric}, it suffices to check that the natural map $$L_{T(j)}(e_n^H)_{hG/H} \to L_{T(j)}(E_n^H)_{hG/H}$$ is an equivalence for all $H\subset G$.  But the cofiber of the map $e_n^H \to E_n^H$ is the coconnective spectrum $\tau_{<0} (E_n^{hH})$.   Since $L_{T(j)}$ and homotopy orbits commute with colimits, the conclusion follows from the fact that $T(j)$-localization annihilates coconnective spectra.
\end{proof}

On the other hand, we claim that there is a map of $\E_{\infty}$-rings $\Phi^G e_n \to R = \tp^{(n)}E_n.$ This is because for each $1\leq k \leq n$, there is a natural map $$\Phi^{C_p^{\times k}} e_n \simeq \Phi^{C_p} \Phi^{C_p^{\times (k-1)}} e_n \to (\Phi^{C_p^{\times (k-1)}}e_n)^{tC_p}.$$  Since $\Phi^{C_p^{\times k}}e_n$ is connective (for instance, by the above description), the map factors through the connective cover $\tau_{\geq 0}(\Phi^{C_p^{\times (k-1)}}e_n)^{tC_p}$.  Composing these for $1\leq k \leq n$, we obtain the desired map.

Combining this with \Cref{lem:covertategen}, we obtain an $\E_{\infty}$-ring map $$L_{T(j)} \Phi^G E_n \to L_{T(j)} \tp^{(n)}E_n = L_{T(j)}R.$$  But by \cite[Theorem 3.5]{BHNNNS}, which computes the blueshift for geometric fixed points of finite abelian groups on Lubin-Tate theories, the spectrum $\Phi^G E_n$ is rational (this also goes back to \cite{HKR}, cf. \Cref{sect:pthroot}).  In particular, for any $1\leq j \leq n$, we have $L_{T(j)}\Phi^G E_n \simeq 0$, which implies that $L_{T(j)}R \simeq 0$ as well.  
\end{proof}

\end{proof}

We now prove \Cref{prop:iterated2} (and with it, the $k=\Q$ case of \Cref{thm:iteratedK}) -- the proof is a relatively straightforward adaptation of the proof of \Cref{thm:main}, so we will assume some familiarity with that proof.

\begin{proof}[Proof of \Cref{prop:iterated2}]
By iterating the Dennis trace map, one obtains a ring map
$$K^{(n)}(R) \to \THH^{(n)}(R)$$
which is $(S^1)^{\times n}$-equivariant for the trivial action on the source and the natural action on the target.  It therefore induces a map of rings $$K^{(n)} (R) \to \THH^{(n)}(R)^{h(S^1)^{\times n}}.$$

\begin{rmk}\label{rmk:iteratednotation}
We have seen that the spectrum $\tp X = (\tau_{\geq 0}X)^{tC_p}$ acquires a residual action of $S^1/C_p$.  Accordingly, $\tp^{(k)} X$ acquires a canonical action of $(S^1/C_p)^{\times k}$; to distinguish the iterates, we will label them by writing $$\tp^{(k)}X = (\tau_{\geq 0}(\cdots (\tau_{\geq 0} X)^{tC_p^{(1)}} \cdots ))^{tC_p^{(k)}},$$ which admits an action of $S^1/C_p^{(1)} \times \cdots \times S^1/C_p^{(k)} \cong (S^1)^{\times k}$.  
\end{rmk}

Using (the $(S^1)^{\times n}$ analogue of) \Cref{THHuprop} as in the proof of \Cref{thm:main}, we see that the identity map of $R = \tp^{(n)}E_n$ determines an $\E_{\infty}$-ring map $$\THH^{(n)}(R)^{h(S^1)^{\times n}} \to R^{h(S^1/C_p)^{\times n}}.$$  Thus, to show that $L_{T(n)}K^{(n)}(R) \not\simeq 0$, it suffices (by \Cref{rmk:ringmap}), to show that: 

\begin{lem}
The ring $R^{h(S^1/C_p)^{\times n}}$ has nontrivial $T(n)$-localization.  
\end{lem}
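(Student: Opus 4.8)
The claim is that $R^{h(S^1/C_p)^{\times n}}$ has nontrivial $T(n)$-localization, where $R = \tp^{(n)} E_n$. The strategy is to iterate the Tate orbit lemma (\Cref{prop:tateorbit}) $n$ times, exactly as a single application was used in the proof of \Cref{thm:main}, to identify this fixed-point ring (up to $p$-completion, which is invisible to $T(n)$) with an iterated Tate construction $E_n^{tS^1 \times \cdots \times tS^1}$, and then invoke an iterated version of \Cref{prop:tS1height} to see that $E_n$ itself is a summand, hence that the $T(n)$-localization is nonzero because $L_{T(n)} E_n \neq 0$.

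\textbf{Key steps.} First I would set up the iteration carefully using the notation of \Cref{rmk:iteratednotation}: writing $R = \tp^{(n)} E_n$ with its $(S^1/C_p^{(1)}) \times \cdots \times (S^1/C_p^{(n)})$-action, one takes homotopy fixed points one factor at a time. The innermost step is the observation that $\tp X = (\tau_{\geq 0} X)^{tC_p}$ is built from a connective spectrum, so \Cref{prop:tateorbit} applies: $(\tp X)^{hS^1/C_p} \simeq_p X^{tS^1}$ when $X$ is already connective (or after taking connective covers, which does not change the $T(n)$-local statement by the coconnective-vanishing argument of \Cref{lem:covertate}). Applying this from the inside out, peeling off one $S^1/C_p$-fixed-point functor and one Tate construction at each stage, one obtains a $p$-complete equivalence
\[
R^{h(S^1/C_p)^{\times n}} \simeq_p \left( \cdots \left( (E_n)^{tS^1} \right)^{tS^1} \cdots \right)^{tS^1},
\]
with $n$ iterated $(-)^{tS^1}$'s (after appropriately threading connective covers through the iteration, exactly as $\tp$ interposes $\tau_{\geq 0}$). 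Then I would prove the iterated analogue of \Cref{prop:tS1height}: if $L_{T(n)} E \neq 0$, then $L_{T(n)}$ of the $n$-fold $(-)^{tS^1}$ of $E$ is nonzero. This reduces by the usual tensoring-with-$K(n)$ trick (as in the proof of \Cref{prop:tS1height}) to the complex-oriented case, where each $(-)^{tS^1}$ is $(-)^{hS^1}[x^{-1}]$ and exhibits $E$ as an $E$-module summand; iterating $n$ times keeps $E$ as a summand, so the $T(n)$-localization survives.

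\textbf{The main obstacle.} The delicate point is the bookkeeping in the iteration: $\tp$ is not simply $(-)^{tC_p}$ but $(\tau_{\geq 0}(-))^{tC_p}$, so at each of the $n$ stages one must check that the spectrum being fed into the next Tate orbit lemma is still bounded below, and that the intervening connective covers do not obstruct the comparison with the iterated $(-)^{tS^1}$ after $T(n)$-localization. This is handled because $\tp^{(k)} E_n$ is connective at every stage (its homotopy fixed points are of bounded-below spectra, and Tate constructions of connective spectra — while not connective — still satisfy the hypothesis of \Cref{prop:tateorbit} after one more connective cover), and because $T(n)$-localization kills coconnective spectra, so replacing by connective covers is harmless throughout. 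Once this is in place, the argument is a formal $n$-fold iteration of the two propositions from \Cref{sec:unblueshift}, and combined with \Cref{rmk:ringmap} and the iterated Dennis trace it yields $L_{T(n)} K^{(n)}(R) \neq 0$, completing the proof of \Cref{prop:iterated2}.
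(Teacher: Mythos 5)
The overall skeleton of your argument — iterate the Tate orbit lemma, keep track of connective covers, and use the complex-orientability retract — is the right idea, but the central identification you propose does not follow from what you cite, and the paper takes a different route precisely to avoid this.

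You claim a $p$-complete equivalence $R^{h(S^1/C_p)^{\times n}} \simeq_p (\cdots(E_n^{tS^1})^{tS^1}\cdots)^{tS^1}$ ``by iterating the Tate orbit lemma.'' But the Tate orbit lemma applies only once here: it converts $\bigl((\tau_{\geq 0}\tp^{(n-1)}E_n)^{tC_p^{(n)}}\bigr)^{hS^1/C_p^{(n)}}$ into $(\tau_{\geq 0}\tp^{(n-1)}E_n)^{tS^1}$. After that you are left with a $(-)^{tS^1}$ of a spectrum carrying a nontrivial action of the remaining torus $(S^1/C_p)^{\times(n-1)}$, and to ``iterate'' you would need to commute $(-)^{tS^1}$ past $(-)^{h(S^1/C_p)^{\times(n-1)}}$. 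This commutation is not automatic: $(-)^{tS^1}$ (for a trivial action) involves a homotopy orbit, and homotopy orbits and homotopy fixed points for a different compact Lie group acting do not commute in general — and unlike the finite-group case, Kuhn's theorem gives no $T(n)$-local ambidexterity for a torus that would rescue this. So the step ``one obtains a $p$-complete equivalence with an iterated Tate construction'' is a genuine gap, and your ``main obstacle'' paragraph, which worries only about connectivity bookkeeping, does not address it.

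The paper's proof is an induction on $k$ that is carefully arranged so that the Tate orbit lemma is invoked only once per step and no such commutation is ever needed. Having identified $(\tp^{(k)}E_n)^{hS^1/C_p^{(k)}}$ with $(\tau_{\geq 0}\tp^{(k-1)}E_n)^{tS^1}$ (up to $p$-completion), it observes that the unit and trace furnish a retract $\tau_{\geq 0}\tp^{(k-1)}E_n \to (\tau_{\geq 0}\tp^{(k-1)}E_n)^{tS^1} \to \tau_{\geq 0}\tp^{(k-1)}E_n$ which is equivariant for the residual $(S^1/C_p)^{\times(k-1)}$-action (since that action is independent of the trivial $S^1$-action being Tate'd). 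Applying $(-)^{h(S^1/C_p)^{\times(k-1)}}$ to this equivariant retract diagram produces $(\tau_{\geq 0}\tp^{(k-1)}E_n)^{h(S^1/C_p)^{\times(k-1)}}$ as a retract of $(\tp^{(k)}E_n)^{h(S^1/C_p)^{\times k}}$, and the inductive hypothesis finishes. In other words, the retract is threaded through \emph{at every stage}, rather than only at the end as in your proposal — this is exactly what replaces the unavailable iterated Tate-orbit identification. If you repair your argument by inserting the equivariant-retract step into each stage of the induction, you recover the paper's proof.
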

\begin{proof}
In fact, we will show by induction that for any $k\geq 0$, the ring $(\tp^{(k)}E_n)^{h(S^1/C_p)^{\times k}}$  has nontrivial $T(n)$-localization.   The base case $k=0$ is immediate.  For the inductive step $k\geq 1$, referring to the notation in \Cref{rmk:iteratednotation}, we write
\[
\tp^{(k)}E_n = \tp(\tp^{(k-1)}E_n) = (\tau_{\geq 0} \tp^{(k-1)}E_n)^{tC_p^{(k)}}.
\]
Then, applying the Tate orbit lemma (\Cref{prop:tateorbit}), we have
\[
(\tp^{(k)}E_n)^{hS^1/C_p^{(k)}} =  ((\tau_{\geq 0} \tp^{(k-1)}E_n)^{tC_p^{(k)}})^{hS^1/{C_p^{(k)}}} \simeq_p (\tau_{\geq 0}\tp^{(k-1)}E_n)^{tS^1},
\]
where $\simeq_p$ denotes equivalence after $p$-completion.  Since $\tau_{\geq 0}\tp^{(k-1)}E_n$ is complex orientable (as it receives a ring map from the complex orientable ring $\tau_{\geq 0}E_n$), it is a retract of its own $S^1$-Tate construction.  It follows that 
\[
(\tp^{(k)}E_n)^{h(S^1/C_p)^{\times k}} \simeq ((\tp^{(k)}E_n)^{hS^1/C_p^{(k)}})^{h(S^1/C_p)^{\times k-1}} 
\]
has $(\tau_{\geq 0}\tp^{(k-1)}E_n)^{h(S^1/C_p)^{\times k-1}}$ as a retract.  By the inductive hypothesis, this latter spectrum has nontrivial $T(n)$-localization, which completes the proof.

\end{proof}

\end{proof}

\subsection{Adding a primitive $p$th root of unity}\label{sect:pthroot}

Here, we make an additional observation about the proof of \Cref{prop:iterated1} which allows us to extend the results of the previous section to $k=\Q(\zeta_p)$.  Recall that $E_n$ denotes a Lubin-Tate theory of height $n$ and residue field $k$, equipped with a genuine $C_p^{\times n}$ equivariant structure via the Borel equivariant trivial action, and $e_n$ denotes the equivariant connective cover of $E_n$.    Our goal will be to prove the following:

\begin{prop}\label{prop:pthroot}
Suppose that the residue field $k$ of $E_n$ is algebraically closed.  Then the ring $ \pi_0(\Phi^{C_p^{\times n}}e_n)\otimes \Q$ contains a primitive $p$th root of unity.  
\end{prop}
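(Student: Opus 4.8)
The plan is to identify the homotopy ring $\pi_*\Phi^{C_p^{\times n}}e_n$ explicitly enough to see the $p$th root of unity, using the fact that geometric fixed points of a trivial Borel action are computed by iterated Tate constructions. Concretely, $\Phi^{C_p^{\times n}}e_n$ is built up by the $n$-fold iteration described in \Cref{rmk:iteratednotation}: writing $e_n$ for the equivariant connective cover and unwinding the cofiber-sequence description of geometric fixed points in \Cref{rmk:geometric}, there is a natural map $\Phi^{C_p^{\times n}}e_n \to \tp^{(n)}E_n$, and after inverting $p$ (equivalently, after rationalization, since by \cite[Theorem 3.5]{BHNNNS} both sides are rational) this map should be an equivalence or at least an isomorphism on $\pi_0\otimes\Q$ --- the transfer terms $Y$ in \Cref{rmk:geometric} are torsion after the relevant truncations. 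So the problem reduces to computing $\pi_0$ of $(\tp^{(n)}E_n)\otimes\Q$, or more precisely $\pi_0$ of the $n$-fold rationalized iterated Tate construction applied to the connective cover $e_n = \tau_{\geq 0}E_n$.

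Next I would run the computation of $\pi_*\tp^{(k)}e_n \otimes \Q$ by induction on $k$, in the spirit of \Cref{exm:Zcase} and \Cref{rmk:cxorientedtCp}. Since $\tau_{\geq 0}E_n$ is complex orientable with $\pi_0 = W(k)$, the first Tate construction gives $\pi_*(\tau_{\geq 0}E_n)^{tC_p} \cong (W(k)/p)((x_1)) \otimes (\text{periodicity})$ roughly speaking --- the key structural point, exactly as in \Cref{exm:kutcp}, is that the class $1 + (\text{unit})x_1$ becomes a primitive $p$th root of unity once one passes to an appropriate completion/localization, because the relation coming from the $p$-series forces $(1+ux_1)^p = 1$. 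Iterating, at each of the $n$ stages one introduces a new variable $x_j$, and taking the connective cover before the next Tate construction is what allows the pattern to continue (cf. the warning in the remark after \Cref{rmk:unblueshift} about why one cannot skip the truncation). After rationalizing, the residual arithmetic that survives in $\pi_0$ is generated over $\Q$ by the elements $1 + u_j x_j$, each of which satisfies $(1+u_j x_j)^p = 1$; since $k$ is algebraically closed (hence contains $\F_p$ and all the relevant roots, and in particular the construction does not accidentally collapse the root of unity to $1$), at least one such class is a genuine primitive $p$th root of unity in $\pi_0(\Phi^{C_p^{\times n}}e_n)\otimes\Q$. This is essentially the statement that the blueshift computation of \cite{HKR}, \cite{BHNNNS} produces not just a rational ring but one containing $\Q(\zeta_p)$.

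I expect the main obstacle to be bookkeeping the iterated Tate construction carefully enough to be sure that the $p$th root of unity $1 + u_j x_j$ is not killed --- i.e., that after all $n$ Tate constructions, rationalizations, and connective covers, the relevant class remains nonzero and of exact order $p$ in $\pi_0$. The subtlety is that each rationalization step (forced because intermediate Tate constructions lower height, and ultimately the whole thing is rational by \cite[Theorem 3.5]{BHNNNS}) could in principle trivialize the root; the resolution is that inverting $x_j$ to pass to the Tate construction is precisely what inverts $p$ in the relevant quotient (the mechanism highlighted in \Cref{exm:kutcp}), so the root of unity is created \emph{before} and survives \emph{into} the rational localization rather than being destroyed by it. A cleaner way to organize this, which I would prefer if it works, is to avoid computing the whole ring and instead produce directly an $\E_\infty$- or even just homotopy-ring map from a ring known to contain $\zeta_p$ --- for instance, observing as in \Cref{exm:kutcp} that $KU_p^{tC_p}$ has $\pi_0$ containing $\Z_p[\zeta_p]$, and that $ku_p^{tC_p}$ likewise has $\pi_0 \cong \Z_p[\zeta_p]$, one might map $ku$ (or $\tau_{\geq 0}E_1$, then push up in height) equivariantly into $e_n$ and track the root of unity through $\Phi^{C_p^{\times n}}$; this would sidestep the full computation and isolate exactly the appearance of $\zeta_p$.
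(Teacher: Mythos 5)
Your proposal has a genuine gap, and it is not just bookkeeping: the claim that the classes $1 + u_j x_j$ satisfy $(1 + u_j x_j)^p = 1$ is false for general height $n > 1$ Lubin--Tate theories. That relation holds precisely because the multiplicative formal group has $p$-series $[p](x) = (1+x)^p - 1$ (up to units), which is why the argument works in \Cref{exm:kutcp} for $KU_p$. For a height $n \geq 2$ formal group, the $p$-series is a Weierstrass-type series involving $v_1, \dots, v_{n-1}$, and no single Chern class $x_j$ produces a root of unity in this way. What the iterated Tate construction gives you is only that the $p$-torsion subscheme $\G_{E_n}[p]$ becomes \emph{constant} (isomorphic to $\underline{C}_p^{\times n}$) over $\pi_0\Phi^{C_p^{\times n}}E_n$ --- that is the content of HKR character theory --- but a constant rank $p^n$ group scheme is not itself $\mu_p$. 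The missing idea is to pass to the top exterior power: by \cite[Corollary 3.5.5, Proposition 5.3.30, Remark 3.2.21]{HL}, the $n$-fold alternating power sends $\underline{C}_p^{\times n} \mapsto \underline{C}_p$ and $\G_{E_n}[p] \mapsto \mu_p$, and \emph{that} is where the $p$th root of unity appears. This is not extractable from a bare computation of $\pi_*\tp^{(n)}e_n$ in terms of power series; it is an additional structural input.

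Your proposed ``cleaner'' alternative --- mapping $ku$ or $\tau_{\geq 0}E_1$ equivariantly into $e_n$ and tracking $\zeta_p$ through $\Phi^{C_p^{\times n}}$ --- does not get off the ground: there is no $\E_\infty$-ring map $ku \to E_n$ (or $E_1 \to E_n$) for $n > 1$, because of the height mismatch (a nonzero ring map would make $E_n$ an $E_1$-algebra, forcing $L_{T(j)}E_n = 0$ for $j > 1$). The part of your proposal that is on the right track is the observation that one needs to relate $\pi_0\Phi^{C_p^{\times n}}E_n$ (where the root of unity actually lives) to $\pi_0(\Phi^{C_p^{\times n}}e_n)\otimes\Q$, and that a map in one direction suffices since the question is only about containing a root of unity. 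The concrete input making this work, which your sketch does not supply, is the HKR transfer formula $\mathrm{tr}^{(1)}(1) = [p]_0(x_1)/x_1$: this element vanishes in the geometric fixed points, and since it has the form $p + x_1 g(x_1)$ with $p$ invertible after rationalization, $x_1$ (and hence all the Euler classes that need to be inverted to pass from genuine to geometric fixed points) becomes a unit, producing the needed factorization $\pi_0\Phi^{C_p^{\times n}}E_n \to \pi_0(\Phi^{C_p^{\times n}}e_n)\otimes\Q$.
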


We first recall some computational facts:

\begin{rmk}\label{rmk:cohomology}

Fixing a complex orientation on $E_n$, we have an isomorphism (cf. \cite[\S 5.4]{HKR}) 
\[
\pi_* E_n^{hC_p^{\times n}} \cong (E_n)_*[[\tilde{x}_1, \tilde{x}_2, \cdots \tilde{x}_n]]/([p](\tilde{x}_1), [p](\tilde{x}_2), \cdots , [p](\tilde{x}_n)),
\]
where $\tilde{x}_i \in E_n^{2}(BC_p^{\times n})$ is the first Chern class of the line bundle corresponding to the $i$th projection $C_p^{\times n} \to C_p$, and $[p]$ denotes the $p$-series of (the chosen coordinate on) the formal group $\Spf E_n^*(\mathbb{C}P^{\infty})$ over $\pi_*(E_n)$.  Choosing a periodicity element $u\in \pi_2E_n$ and setting $x_i = u \tilde{x}_i$, we obtain an isomorphism
\begin{equation}\label{eqn:pi0EhG}
\pi_0E_n^{hC_p^{\times n}} \simeq \pi_0(E_n)[[x_1, x_2, \cdots x_n]]/([p]_0(x_1), [p]_0(x_2), \cdots , [p]_0(x_n)),
\end{equation}
where $[p]_0$ denotes the $p$-series shifted into degree zero, so that $[p]_0(x_i) = u[p](\tilde{x}_i)$.  In other words, $[p]_0$ is the $p$-series of the formal group $\Spf E_n^0(\mathbb{C}P^{\infty})$ over $\pi_0(E_n)$, which we will denote by $\G_{E_n}$.  Note that, essentially by definition of equivariant connective cover, there is an isomorphism of rings $\pi_0 e_n^{C_p^{\times n}} \cong \pi_0 E_n^{hC_p^{\times n}}$.

By \cite[Proposition 3.20]{GM}, the ring $\pi_0 \Phi^{C_p^{\times n}} E_n$ is obtained from the ring $\pi_0 E_n^{hC_p^{\times n}}$  by inverting the Chern classes of the line bundles corresponding to the nontrivial characters of $C_p^{\times n}$.  These are the elements 
\[
[a_1]_0(x_1) +_{\G_{E_n}} [a_2]_0(x_2) +_{\G_{E_n}} \cdots +_{\G_{E_n}} [a_n]_0(x_n)
\]
for $0\leq a_i \leq p-1$.
\end{rmk}

We then have the following variant of \Cref{prop:pthroot}:

\begin{prop}\label{prop:pthroothelper}
Suppose that the residue field $k$ of $E_n$ is algebraically closed.  Then the ring $\pi_0 \Phi^{C_p^{\times n}} E_n$ contains a primitive $p$th root of unity.  
\end{prop}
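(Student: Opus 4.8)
The plan is to exploit the completely explicit presentation of $\pi_0\Phi^{C_p^{\times n}}E_n$ furnished by \Cref{rmk:cohomology}: I would recognize the tautological tuple $(x_1,\dots,x_n)$ as a trivialization of the $p$-torsion group scheme of the Quillen formal group $\G_{E_n}$, and then extract the primitive $p$th root of unity as the ``determinant'' of that trivialization, i.e.\ via a Weil pairing. Since the preceding remark already records the homotopy of the relevant rings, the proof is essentially a piece of formal/$p$-divisible group theory.

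In detail, write $A:=\pi_0\Phi^{C_p^{\times n}}E_n$. By \Cref{rmk:cohomology}, $A$ is obtained from the finite free $\pi_0(E_n)$-algebra $\pi_0 E_n^{hC_p^{\times n}}=\pi_0(E_n)[[x_1,\dots,x_n]]/([p]_0(x_1),\dots,[p]_0(x_n))$ by inverting the elements $\xi_{\vec a}:=[a_1]_0(x_1)+_{\G_{E_n}}\cdots+_{\G_{E_n}}[a_n]_0(x_n)$ for $\vec a=(a_1,\dots,a_n)\in(\Z/p)^n\setminus\{0\}$. Since $\G_{E_n}$ has height $n$, Weierstrass preparation identifies $\G_{E_n}[p]:=\Spec(\pi_0(E_n)[[x]]/[p]_0(x))$ with a finite locally free group scheme of order $p^n$ over $\pi_0(E_n)$, and $\pi_0 E_n^{hC_p^{\times n}}$ carries the universal $n$-tuple of $p$-torsion points of $\G_{E_n}$. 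Using the identity $[a+b]_0=[a]_0+_{\G}[b]_0$, the assignment $\vec a\mapsto\xi_{\vec a}$ therefore defines a homomorphism of finite locally free $A$-group schemes $\phi\colon(\underline{\Z/p})^n\to\G_{E_n}[p]\otimes_{\pi_0(E_n)}A$. Because $\xi_{\vec a}-_{\G}\xi_{\vec b}=\xi_{\vec a-\vec b}$ is a unit of $A$ whenever $\vec a\ne\vec b$, the kernel of $\phi$ is trivial, so $\phi$ is a closed immersion; as source and target are finite locally free $A$-group schemes of the same rank $p^n$, the fppf quotient has order $1$ and $\phi$ is an isomorphism. (One moreover knows $A$ is rational by \cite[Theorem 3.5]{BHNNNS}, so $\G_{E_n}[p]\otimes A$ is even finite \'etale, but only the rank bookkeeping is used.)

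Next I would apply the top exterior power of finite locally free commutative group schemes to $\phi$, obtaining an isomorphism $\wedge^n\phi\colon\underline{\Z/p}\xrightarrow{\ \sim\ }(\wedge^n\G_{E_n}[p])\otimes_{\pi_0(E_n)}A$ over $A$. The remaining input is the identification $\wedge^n\G_{E_n}[p]\cong\mu_p$ over $\pi_0(E_n)$; granting it, $\wedge^n\phi$ becomes an isomorphism $\underline{\Z/p}\xrightarrow{\ \sim\ }\mu_p\otimes_{\Z}A$, i.e.\ a generator of $\mu_p(A)$, which is precisely a primitive $p$th root of unity in $A=\pi_0\Phi^{C_p^{\times n}}E_n$. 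To see the identification, note that $\wedge^n\G_{E_n}[p]$ is an order-$p$ finite locally free group scheme over the complete local ring $\pi_0(E_n)\cong W(k)[[u_1,\dots,u_{n-1}]]$, hence by Oort--Tate is pinned down by its special fibre; and since $k$ is algebraically closed, $\G_{E_n}\otimes k$ is isomorphic to the height-$n$ Honda formal group, whose Dieudonn\'e module is free of rank $n$ with Frobenius cyclically permuting a basis up to a factor of $p$. Hence Frobenius acts on the top exterior power as a unit times $p$, which identifies $\wedge^n(\G_{E_n}\otimes k)[p^\infty]$ with $\mu_{p^\infty}$ and its $p$-torsion with $\mu_p$.

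The step I expect to be the main obstacle is making this last paragraph rigorous: one must set up the existence and base-change behaviour of exterior powers of finite flat (equivalently Barsotti--Tate, after Hedayatzadeh) group schemes and combine it with the Oort--Tate classification. A perhaps more economical route for the paper is to read the root of unity directly off the generalized character theory of Hopkins--Kuhn--Ravenel \cite{HKR}, where it appears as the ``$\det$'' of the tautological level structure; this amounts to the assertion that the monodromy action on $\G_{E_n}[p]$ has determinant the mod-$p$ cyclotomic character. At height $n=2$ this last statement is literally the Weil pairing on the $p$-torsion of a universal deformation of a supersingular elliptic curve, matching the remark in the introduction about maps from $K(K(\Q))$ to elliptic cohomology theories.
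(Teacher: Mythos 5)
Your plan is precisely the paper's argument: form the Hopkins--Kuhn--Ravenel isomorphism $\underline{C}_p^{\times n}\cong\G_{E_n}[p]$ over $\pi_0\Phi^{C_p^{\times n}}E_n$, apply the top exterior power of finite flat group schemes, and identify $\wedge^n\G_{E_n}[p]\cong\mu_p$. The machinery you flag as the main obstacle --- existence and base-change of exterior powers of finite flat group schemes, and the identification $\wedge^n\G_{E_n}[p]\cong\mu_p$ over the Lubin--Tate ring --- is exactly what \cite{HL} supply (Construction 3.2.11 and Corollary 3.5.5 for the former, Proposition 5.3.30 for the latter, plus Remark 3.2.21 for $\wedge^n\underline{C}_p^{\times n}\cong\underline{C}_p$), and the paper cites these in place of your Oort--Tate/Dieudonn\'e sketch, while also quoting \cite[Proposition 6.2]{HKR} for the first isomorphism rather than re-deriving it by hand as you do.
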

\begin{proof}
By \cite[Proposition 6.2]{HKR} and the description of $\pi_0 \Phi^{C_p^{\times n}}E_n$ above, there is an isomorphism of finite flat group schemes
\begin{equation}\label{eqn:gpschemes}
(\underline{C}_p^{\times n})_{\pi_0 \Phi^{C_p^{\times n}} E_n} \simeq (\G_{E_n}[p])_{\pi_0 \Phi^{C_p^{\times n}} E_n}
\end{equation}
between the constant group scheme at $C_p^{\times n}$ and the $p$-torsion scheme of the formal group $\G_{E_n}$
(in fact, $\pi_0 \Phi^{C_p^{\times n}} E_n$ is the universal $\pi_0E_n$-algebra with such an isomorphism).  

\begin{rmk}
Let $R$ be a commutative ring and let $H$ be a finite flat commutative group scheme over $R$.  Then Hopkins and Lurie show that, for $d\geq 0$, one can associate to $H$ its \emph{group scheme of alternating maps} $\Alt{d}{H}$ \cite[Construction 3.2.11]{HL}, which can be thought of as classifying alternating maps $H^{\times d} \to \G_m$ (cf. \cite[\S 3]{HL} for details).  In the special case that $H = \Gamma[p^t]$ is the $p^t$-torsion in a $p$-divisible group $\Gamma$ of height $n$ and dimension $1$, the schemes $\Alt{d}{\Gamma[p^t]}$ are also finite flat commutative group schemes, and their Cartier duals (denoted $\bD$) form a directed system
\[
\bD(\Alt{d}{\Gamma[1]}) \to \bD(\Alt{d}{\Gamma[p]}) \to \bD(\Alt{d}{\Gamma[p^2]}) \to \cdots
\]
which defines a $p$-divisible group of height $\binom{n}{d}$ and dimension $\binom{n-1}{d-1}$ \cite[Corollary 3.5.5]{HL}.  We will denote this $p$-divisible group by $\wedge^d \Gamma$ and refer to it as the \emph{$d$th exterior power} of $\Gamma$.  

We record two additional features of this construction:
\begin{enumerate}
\item For a $p$-divisible group $\Gamma$ of height $n$ and dimension $1$, the ``top'' exterior power $\wedge^n \Gamma$ has height $1$ and dimension $1$.  In the special case where $\Gamma = \G_{E_n}[p^{\infty}]$ is the $p$-divisible group corresponding to the  Lubin-Tate theory $E_n$ of height $n$ with algebraically closed residue field $k$, there is an isomorphism of group schemes $\wedge^n \G_{E_n}[p] \cong \mu_p$ over $\pi_0E_n$ \cite[Proposition 5.3.30]{HL}.
\item For the constant group scheme $\underline{C}_p^{\times n}$ over a commutative ring $R$, there are isomorphisms
\[
\wedge^n \underline{C}_p^{\times n} := \bD(\Alt{n}{ \underline{C}_p^{\times n}}) \cong \bD(\mu_p) \cong \underline{C}_p
\]
by \cite[Remark 3.2.21]{HL}.  
\end{enumerate}
\end{rmk}

Thus, applying $\wedge^n$ to the isomorphism (\ref{eqn:gpschemes}) of group schemes and using the above remark, we find that the finite flat group schemes $\wedge^n  \underline{C}_p^{\times n}\cong \underline{C}_p$ and $\wedge^n \G_{E_n}[p] \cong \mu_p$ are isomorphic over $\pi_0 \Phi^{C_p^{\times n}} E_n$.  It follows that the ring $\pi_0 \Phi^{C_p^{\times n}} E_n$ contains a primitive $p$th root of unity.  
\end{proof}

We are now ready to prove \Cref{prop:pthroot}.

\begin{proof}[Proof of \Cref{prop:pthroot}]
By \Cref{prop:pthroothelper}, $\pi_0 \Phi^{C_p^{\times n}}E_n$ contains a primitive $p$th root of unity.   Hence, it will be enough to exhibit a ring homomorphism fitting into the square:\footnote{In fact, the map we construct turns out to be an equivalence, but we will not need this.}
\begin{equation}\label{eqn:pthrootsquare}
\begin{tikzcd}
\pi_0 E_n^{hC_p^{\times n}} \arrow[d] \arrow[r, "\cong"]& \pi_0 e_n^{C_p^{\times n}}  \arrow[d]\\
\pi_0 \Phi^{C_p^{\times n}}E_n \arrow[r, dashed] & \pi_0(\Phi^{C_p^{\times n}}e_n) \otimes \Q.
\end{tikzcd}
\end{equation}
Here, the vertical maps are the canonical maps from genuine fixed points to geometric fixed points (composed with rationalization on the right).  Recalling the descriptions of these groups given in \Cref{rmk:cohomology}, the left vertical map is given by inverting elements $\sum_{\G_{E_n}}[a_i]_0(x_i)$, where $0\leq a_i\leq p-1$.  Thus, to produce the desired factorization, it suffices to see that these elements are invertible in $\pi_0(\Phi^{C_p^{\times n}}e_n) \otimes \Q$.  In fact, by symmetry, it suffices to see that the image of $x_1$ in $\pi_0(\Phi^{C_p^{\times n}}e_n) \otimes \Q$ is invertible.  

Note that by definition of geometric fixed points, the map $e_n^{C_p^{\times n}} \to \Phi^{C_p^{\times n}}e_n$ is nullhomotopic when precomposed with additive transfers from any proper subgroup of $C_p^{\times n}$: in particular, if we let $\mathrm{tr}^{(1)}$ denote the composite
\[
\mathrm{tr}^{(1)}: e \xrightarrow{\mathrm{tr}} e^{C_p} \xrightarrow{\mathrm{proj}_1^*} e^{C_p^{\times n}}
\]
(where $\mathrm{proj}_1^*$ denotes the map induced by projection $C_p^{\times n}\to C_p$ onto the first component), then the image of $\mathrm{tr}^{(1)}$ in $\pi_0 e_n^{C_p^{\times n}}$ is in the kernel of the map $\pi_0 e_n^{C_p^{\times n}} \to \pi_0 \Phi^{C_p^{\times n}}e_n$.  By \cite[Remark 6.15]{HKR}, we have the formula 
\[
\mathrm{tr}^{(1)}(1) = [p](\tilde{x}_1)/\tilde{x}_1 = [p]_0(x_1)/x_1,
\]
where $1\in \pi_0 e_n$ denotes the unit.  Thus, $[p]_0(x_1)/x_1 = \mathrm{tr}^{(1)}(1)$ vanishes in $\pi_0 (\Phi^{C_p^{\times n}}e_n)\otimes \Q$.  But the series $[p]_0(x_1)/x_1$ has the form $p+x_1g(x_1)$ for some series $g \in (e_n)_0[[x_1]]$ and $p$ is invertible in $\pi_0 (\Phi^{C_p^{\times n}}e_n)\otimes \Q$, from which it follows that $x_1$ is also invertible, as desired.  
\end{proof}

\subsection{The case of an arbitrary field $k$}\label{sub:generalk}

Following arguments of Akhil Mathew, we now extend the results of \Cref{sect:Q} to prove \Cref{thm:iteratedK} for all fields $k$.  The proof rests on the work of Clausen-Mathew-Naumann-Noel \cite{CMNN} on Galois descent for telescopically localized $K$-theory and work of Barthel-Carmeli-Schlank-Yanovski \cite{BCSY} on $T(n)$-local homotopy theory.  We refer the reader to \cite{RognesGal} for background on the notion of ($T(n)$-local) $G$-Galois extensions of ring spectra in the sense of Rognes.

\begin{thm}\label{thm:pgroupdescent}
Let $G$ be a finite $p$-group and let $R\to R'$ be a map of $\E_{\infty}$-algebras over $\Q$ which is a $G$-Galois extension in the sense of \cite{RognesGal}.  Then for $n\geq 0$, the natural map
\[
L_{T(n)}K^{(n)}(R) \to L_{T(n)}K^{(n)}(R')
\]
is a $T(n)$-local $G$-Galois extension.  In particular, it induces an equivalence $L_{T(n)}K^{(n)}(R) \to L_{T(n)}K^{(n)}(R')^{hG}$.  
\end{thm}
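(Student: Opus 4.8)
The plan is to iterate the single-step statement: it suffices to prove that if $A \to A'$ is a $T(n)$-local (finite $p$-group) $G$-Galois extension of $\E_\infty$-rings, then $L_{T(n)}K(A) \to L_{T(n)}K(A')$ is again a $T(n)$-local $G$-Galois extension, and then to feed this back into itself $n$ times starting from the (honest, in particular $T(n)$-local for every $n$) Galois extension $R \to R'$ of $\Q$-algebras. Of course one must check at each stage that the input hypotheses of the next stage are met: the output of $L_{T(n)}K(-)$ need not be rational, so the iteration cannot simply reapply \Cref{thm:pgroupdescent} verbatim — rather one proves the key lemma in the $T(n)$-local world and uses \Cref{thm:LMMT} to pass between $K^{(j)}$ and $L_{T(n)}K^{(j)}$ along the way, exactly as in the proof of \Cref{prop:iterated1}. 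So let me restructure: the real content is a lemma asserting that $L_{T(n)}K(-)$ carries $T(n)$-local $G$-Galois extensions (for $G$ a finite $p$-group) to $T(n)$-local $G$-Galois extensions, and the theorem follows by induction on $n$ combined with \Cref{thm:LMMT}.

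For the key lemma itself, the plan is to invoke the Galois descent results of Clausen--Mathew--Naumann--Noel \cite{CMNN}. Their work shows that $L_{T(n)}K(-)$ satisfies descent along $G$-Galois extensions; concretely, given a $T(n)$-local $G$-Galois extension $A \to A'$, one knows $L_{T(n)}K(A) \to L_{T(n)}K(A')$ is a faithful $G$-equivariant map with $L_{T(n)}K(A) \simeq (L_{T(n)}K(A'))^{hG}$, and one wants to upgrade this to the assertion that it is itself a $G$-Galois extension in Rognes's sense. The subtlety is the ``unramifiedness'' condition: one needs $L_{T(n)}K(A') \otimes_{L_{T(n)}K(A)} L_{T(n)}K(A') \simeq \prod_G L_{T(n)}K(A')$, where the tensor is the $T(n)$-local one. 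Here I would use that $K(-)$ commutes with filtered colimits and is symmetric monoidal-compatible enough that for a $G$-Galois extension — which in particular is finite étale, hence $A' \otimes_A A' \simeq \prod_G A'$ — one can hope to push this through $K$; but $K$ is not monoidal, so this is exactly where the CMNN machinery (their analysis of $K$-theory of $\infty$-categories with $G$-action, and the fact that $\Perf(A') $ as a module over $\Perf(A)$ with its $G$-action has the property that the natural functor to $G$-equivariant objects is an equivalence) does the work. The cleanest route is probably to cite the relevant statement from \cite{CMNN} that $Y \mapsto L_{T(n)}K(\Perf(Y))$ is not just an étale sheaf (\Cref{thm:etaledesc}) but carries the Galois-descent structure, and to observe that a $G$-Galois extension of $\Q$-algebras remains a $G$-Galois extension étale-locally in the requisite sense; and then the work of Barthel--Carmeli--Schlank--Yanovski \cite{BCSY} provides the framework in which ``$T(n)$-local $G$-Galois extension'' is the correct ambient notion (e.g. that $T(n)$-local colimits/limits behave well enough that $(L_{T(n)}K(A'))^{hG}$ computes what one expects and that the unramifiedness is checked $T(n)$-locally).

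For the base of the induction, $R \to R'$ a $G$-Galois extension of $\Q$-algebras is automatically a $T(0)$-local — and indeed a $T(j)$-local for every $j$ — $G$-Galois extension, since being a $\Q$-algebra it is already local and the extension is already finite étale; so the $n=0$ case is vacuous and the $n = 1$ case is the first application of the key lemma. For the inductive step, suppose the theorem holds for $n-1$, so $L_{T(n-1)}K^{(n-1)}(R) \to L_{T(n-1)}K^{(n-1)}(R')$ is a $T(n-1)$-local $G$-Galois extension; by \Cref{thm:LMMT} (applied as in the proof of \Cref{prop:iterated1}) the map $L_{T(n)}K^{(n)}(R) \to L_{T(n)}K(L_{T(n-1)\vee T(n)}K^{(n-1)}(R))$ is an equivalence, and similarly for $R'$, so one reduces to showing $L_{T(n)}K(-)$ applied to the $(T(n-1)\vee T(n))$-localization of the $G$-Galois extension $L_{T(n-1)}K^{(n-1)}(R) \to L_{T(n-1)}K^{(n-1)}(R')$ is a $T(n)$-local $G$-Galois extension — which follows from the key lemma once one checks that localizing a $G$-Galois extension at a sum of telescopes preserves the $G$-Galois property (this is standard, cf. \cite{CMNN}). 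The main obstacle I anticipate is verifying the unramifiedness/faithfulness conditions for the output of $K$-theory — i.e. genuinely extracting a $G$-Galois structure, not merely descent, from the $K$-theory of a Galois extension — which is the heart of the Clausen--Mathew--Naumann--Noel input and must be cited with care rather than reproved; the bookkeeping of interleaving $L_{T(n)}$ with the iteration via \Cref{thm:LMMT} is routine by comparison.
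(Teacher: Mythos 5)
Your overall structure — induction on $n$, interleaved with \Cref{thm:LMMT} to match $L_{T(n)}K^{(n)}$ with $L_{T(n)}K$ applied to a suitable localization of $K^{(n-1)}$, together with the Galois descent input from \cite{CMNN} — is the same as the paper's. You have also correctly identified the pressure point: descent from \cite{CMNN} gives the fixed-point identification $L_{T(n)}K^{(n)}(R) \simeq (L_{T(n)}K^{(n)}(R'))^{hG}$, but this is \emph{a priori} weaker than the map being a $T(n)$-local $G$-Galois extension, which is what the induction needs as input at the next stage.

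The gap is in how you propose to close that loop. You suggest establishing the unramifiedness condition (that the relative tensor square splits as $\prod_G$) by somehow pushing the unramifiedness of $A\to A'$ through $K$-theory, citing CMNN's analysis of $K$-theory of Galois extensions, with \cite{BCSY} supplying only ambient ``framework.'' That route would be quite hard: $K$ is not monoidal, and CMNN's descent results do not by themselves give you the unramifiedness of $L_{T(n)}K(A)\to L_{T(n)}K(A')$. The paper instead invokes a specific and much stronger theorem of Barthel--Carmeli--Schlank--Yanovski: for a finite $p$-group $G$ acting on \emph{any} $T(n)$-local $\E_{\infty}$-ring $E$, the natural map $E^{hG}\to E$ is automatically a $T(n)$-local $G$-Galois extension. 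With this in hand there is nothing to propagate: once CMNN identifies $L_{T(n)}K^{(n)}(R)$ with the $G$-homotopy fixed points of $L_{T(n)}K^{(n)}(R')$, the BCSY theorem immediately says the resulting map is $T(n)$-locally Galois, closing the induction. This is also exactly where the hypothesis that $G$ is a $p$-group enters; your write-up never explains why that hypothesis is needed. (The proof of the BCSY statement, sketched in the paper, uses Kuhn's blueshift theorem to identify $T(n)$-local orbits and fixed points and the conservativity of $(-)^{hG}$ on $T(n)$-local spectra.) A secondary bookkeeping point: in the inductive step one should use \Cref{thm:introCMNN} to note that $L_{T(n)}K^{(n-1)}(A)\simeq 0$ for rational $A$, so $L_{T(n-1)\vee T(n)}K^{(n-1)}(A)\simeq L_{T(n-1)}K^{(n-1)}(A)$; your ``$(T(n-1)\vee T(n))$-localization'' formulation does not yet line up the output of \Cref{thm:LMMT} with the inductive hypothesis until this simplification is made.
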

\begin{proof}
We proceed by induction.  The case $n=0$ is true by assumption, so suppose $n\geq 1$ and assume that the map 
\[
L_{T(n-1)} K^{(n-1)}(R) \to L_{T(n-1)}K^{(n-1)}(R')
\]
is a $T(n-1)$-local $G$-Galois extension.  Then, by \cite[Corollary 4.13]{CMNN}, the induced map
\begin{equation}\label{eqn:gal1}
L_{T(n)} K( L_{T(n-1)} K^{(n-1)}(R) ) \to (L_{T(n)}K( L_{T(n-1)}K^{(n-1)}(R')))^{hG}
\end{equation}
is an equivalence.  On the other hand, by \cite{CMNN} (\Cref{thm:introCMNN}), the $T(n)$-localization of $K^{(n-1)}(A)$ vanishes for any rational $\E_{\infty}$-algebra $A$, and hence by \cite{LMMT} (\Cref{thm:LMMT}), there is a natural equivalence
\[
L_{T(n)}K^{(n)}(A) \simeq L_{T(n)}K( L_{T(n)\vee T(n-1)} K^{(n-1)}(A)) = L_{T(n)} K(L_{T(n-1)}K^{(n-1)}(A)).
\]
Combining this with (\ref{eqn:gal1}), it follows that the natural map
\[
L_{T(n)}K^{(n)}(R) \to (L_{T(n)}K^{(n)}(R'))^{hG}
\]
is an equivalence.  The inductive step then follows from the following theorem of \cite{BCSY}.
\end{proof}

\begin{thm}[Barthel-Carmeli-Schlank-Yanovski]
Let $G$ be a finite $p$-group and let $E$ be a $T(n)$-local $\E_{\infty}$-ring spectrum with an action of $G$.  Then the natural map $E^{hG}\to E$ is a $T(n)$-local $G$-Galois extension.  
\end{thm}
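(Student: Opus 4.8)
The plan is to verify directly the two defining properties of a $T(n)$-local $G$-Galois extension in the sense of Rognes \cite{RognesGal} for the map $E^{hG}\to E$ (with $G$ acting trivially on the source): that the canonical map $E^{hG}\to (E)^{hG}$ is an equivalence, and that $E$ is dualizable as an $E^{hG}$-module with the canonical comparison $\mu\colon E\otimes_{E^{hG}}E\to\prod_{G}E$ an equivalence, all tensor products, limits and colimits being formed $T(n)$-locally. The first property is formal: under the adjunction between the constant-diagram functor $\CAlg(\Sp_{T(n)})\to\Fun(BG,\CAlg(\Sp_{T(n)}))$ and $(-)^{hG}$, the map $E^{hG}\to(E)^{hG}$ in question is the one classified by the counit $\mathrm{const}(E^{hG})\to E$, hence is the identity. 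So all of the content is in the second property, and the one essential input is the vanishing of the Tate construction $T(n)$-locally: for any $T(n)$-local spectrum $X$ with $C_p$-action, the norm $X_{hC_p}\to X^{hC_p}$ is an equivalence and so $X^{tC_p}\simeq 0$ (Kuhn's theorem, \Cref{thm:Kuhn}; this is an instance of the semiadditivity of $\Sp_{T(n)}$). I would apply this not just to $E$ but to arbitrary $T(n)$-local $E$-modules with $C_p$-action, which is legitimate since $\mathrm{Mod}_E(\Sp_{T(n)})$ inherits the ambidexterity of $\Sp_{T(n)}$.

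Next I would reduce to the case $G=C_p$ by induction on $|G|$. A nontrivial finite $p$-group $G$ has a central subgroup $N\cong C_p$; then $E^{hN}$ is again a $T(n)$-local $\E_{\infty}$-ring, now carrying a $(G/N)$-action, and $E^{hG}\simeq(E^{hN})^{h(G/N)}$. Applying the inductive hypothesis to $E^{hN}$ and the smaller group $G/N$ shows that $(E^{hN})^{h(G/N)}\to E^{hN}$ is a $T(n)$-local $(G/N)$-Galois extension, and the base case will give that $E^{hN}\to E$ is a $T(n)$-local $N$-Galois extension; since a tower of Galois extensions corresponding to a normal subgroup assembles to a Galois extension \cite{RognesGal}, it follows that $E^{hG}\to E$ is $G$-Galois.

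For the base case $G=C_p$ one must exhibit the duality data making $E$ a dualizable $E^{hC_p}$-module and prove that $\mu\colon E\otimes_{E^{hC_p}}E\to\prod_{C_p}E$ is an equivalence. The dualizability (indeed, self-duality) comes from the norm equivalence $E_{hC_p}\xrightarrow{\ \sim\ }E^{hC_p}$: the composite $E\to E_{hC_p}\xrightarrow{\ \sim\ }E^{hC_p}$ of the canonical projection with the norm is an $E^{hC_p}$-linear transfer map, which the ambidexterity formalism promotes to an evaluation/coevaluation pair for $E$ over $E^{hC_p}$. For $\mu$, the cleanest route is to argue that the Tate vanishing $E^{tC_p}\simeq 0$ makes $E^{hC_p}\to E$ a descent morphism, so that base change induces an equivalence $\mathrm{Mod}_{E^{hC_p}}(\Sp_{T(n)})\xrightarrow{\ \sim\ }\Fun(BC_p,\mathrm{Mod}_E(\Sp_{T(n)}))$; tracing the $E^{hC_p}$-module $E$ through this equivalence identifies its base change $E\otimes_{E^{hC_p}}E$ with the induced $C_p$-equivariant $E$-module, whose underlying object is $\prod_{C_p}E$ by finite semiadditivity, and unwinding shows that this identification is $\mu$.

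I expect this last descent step — passing from the object-level statement $E^{tC_p}\simeq 0$ to the equivalence of module $\infty$-categories, and hence to the statement that $\mu$ is an equivalence — to be the main obstacle, since it is precisely the point at which one needs the semiadditivity of $\Sp_{T(n)}$ to be higher rather than merely finite, so that it propagates to the relevant diagram and module categories. An alternative, more computational route would bypass the categorical formulation and verify directly that $\mu$ is an equivalence by comparing homotopy fixed point spectral sequences, but this seems considerably more laborious.
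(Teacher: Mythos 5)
Your outline and the paper's proof share the same first move — reduce to showing that the canonical comparison $\mu \colon L_{T(n)}(E \otimes_{E^{hG}} E) \to \prod_G E$ is an equivalence, with ambidexterity (Kuhn's theorem) as the key input — but then diverge. The paper does \emph{not} attempt to produce an equivalence of module $\infty$-categories. Instead it applies $(-)^{hG}$ to $\mu$, checks that both sides become $E$ (using the interchange $(-)^{hG} \simeq (-)_{hG}$ $T(n)$-locally, so that $(E\otimes_{E^{hG}}E)_{hG} \simeq E_{hG}\otimes_{E^{hG}}E \simeq E^{hG}\otimes_{E^{hG}}E \simeq E$), and then concludes by the \emph{conservativity of $(-)^{hG}$ on $T(n)$-local spectra} for $G$ a finite $p$-group. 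That conservativity statement — the $T(n)$-local analogue of Hopkins--Lurie's Corollary 5.4.4 in the $K(n)$-local setting — is the genuine new ingredient from \cite{BCSY}, and it lets the argument go through directly for any finite $p$-group, with no induction and no reduction to $G = C_p$.

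The step you flag as ``the main obstacle'' is a real gap, not merely a technicality. Tate vanishing $E^{tC_p} \simeq 0$ does not by itself make $E^{hC_p} \to E$ a descent morphism, nor does it produce the equivalence $\mathrm{Mod}_{E^{hC_p}}(\Sp_{T(n)}) \simeq \Fun(BC_p, \mathrm{Mod}_E(\Sp_{T(n)}))$: that equivalence encodes \emph{both} that $\mu$ is an equivalence \emph{and} that $(-)^{hC_p}$ is conservative on $T(n)$-local $E$-modules with $C_p$-action, so deriving it formally from Tate vanishing is at least as hard as the theorem itself. The missing lemma is precisely the conservativity of $(-)^{hG}$, which you never invoke. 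With that lemma in hand you could shortcut your argument considerably: you would no longer need the induction via a central $C_p$ or the dualizability/descent discussion, since checking $\mu$ after $(-)^{hG}$ and appealing to conservativity already finishes the proof for general $G$.
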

This theorem will be a consequence of the results in the forthcoming work \cite{BCSY}, but we indicate some of the proof for the reader's convenience:
\begin{proof}[Proof outline]
It suffices to check that the natural map
\[
L_{T(n)}(E\otimes_{E^{hG}} E) \to \prod_G E
\]
(induced by the $G$-action and multiplication) is an equivalence.  But the map is an equivalence after applying the functor $(-)^{hG}$: this is because by Kuhn's blueshift theorem (\Cref{thm:Kuhn}), $T(n)$-local orbits and fixed points coincide, so 
\begin{align*}
(L_{T(n)}(E\otimes_{E^{hG}} E))^{hG} \simeq L_{T(n)} (E\otimes_{E^{hG}}E)_{hG} &\simeq L_{T(n)}(L_{T(n)}E_{hG}\otimes_{E^{hG}} E) \\
&\simeq L_{T(n)} (E^{hG}\otimes_{E^{hG}} E) \simeq E.
\end{align*}
The conclusion then follows from the fact that $(-)^{hG}$ is conservative on $T(n)$-local spectra -- this statement, whose $K(n)$-local analogue is \cite[Corollary 5.4.4]{HL}, will appear in \cite{BCSY}.
\end{proof}

We are now ready to complete the proof of \Cref{thm:iteratedK}.  

\begin{proof}[Proof of \Cref{thm:iteratedK}]
We make a series of reductions:
\begin{enumerate}
\item By mapping to an algebraic closure, it suffices to demonstrate the statement for an algebraically closed field $k$ (of characteristic different from $p$).  
\item In this case, by work of Suslin \cite{Suslin1, Suslin2}, one has that $L_{T(1)}K(k) \simeq KU^{\wedge}_p$.  On the other hand, by \cite[Theorem 1.4]{BCM}, there is an equivalence of $\E_{\infty}$-rings
\[
L_{T(1)}K(\Q(\zeta_{p^{\infty}})) \simeq L_{T(1)}(KU^{\wedge}_p \otimes K(\Q)).
\]
Since this receives a ring map from $KU^{\wedge}_p$,  we are reduced to the case $k = \Q(\zeta_{p^{\infty}})$. 
\item In fact, because $K$-theory commutes with filtered colimits, we have that
\[
T(n) \otimes K^{(n)}(\Q(\zeta_{p^{\infty}})) \simeq  \colim_j \big( T(n) \otimes K^{(n)}(\Q(\zeta_{p^j})) \big).
\]
We may choose $T(n)$ to be an $\E_1$-ring spectrum, making this colimit a filtered colimit of rings.  
As a filtered colimit of rings vanishes if and only if it does at a finite stage, we are reduced to showing that $L_{T(n)}K^{(n)}(\Q(\zeta_{p^j})) \not\simeq 0$ (as $j$ ranges over the natural numbers).
\item The map of rings $\Q(\zeta_p) \to \Q(\zeta_{p^j})$ is a Galois extension with Galois group $C_{p^{j-1}}$, and therefore by \Cref{thm:pgroupdescent}, we have an equivalence 
\[
L_{T(n)}K^{(n)}(\Q(\zeta_{p}))\simeq L_{T(n)}K^{(n)}(\Q(\zeta_{p^j}))^{hC_{p^{j-1}}}.
\]
  Therefore, we are reduced to the case of the field $k=\Q(\zeta_p)$.  
\end{enumerate}

The case of $\Q(\zeta_p)$ follows by using the results of \Cref{sect:pthroot} to strengthen the proof of \Cref{prop:iterated1}.  Namely,  the ring $R$ (from the proof of \Cref{prop:iterated1}) receives a ring map from $\Phi^{C_p^{\times n}}e_n$, and therefore by \Cref{prop:pthroot}, $\pi_0(R\otimes \Q)$ contains a primitive $p$th root of unity.  Since the map $\Q\to \Q(\zeta_p)$ is \'{e}tale, the map $\Q \to L_{T(0)}R \simeq R\otimes \Q$ in the proof of \Cref{prop:iterated1} extends to a ring map $\Q(\zeta_p)\to L_{T(0)}R $ and one obtains an $\E_{\infty}$-ring map 
\[
L_{T(n)}K^{(n)}(\Q(\zeta_p)) \to L_{T(n)}K^{(n)}(R).
\]
The rest of the argument in \Cref{sect:Q} shows that the target of this map is nonzero, so $L_{T(n)}K^{(n)}(\Q(\zeta_p)) \not\simeq 0$, and we are done.  
\end{proof}

\bibliographystyle{alpha}
\bibliography{Bibliography}

\end{document}